\author{
  \textbf{Chiara Esposito}\thanks{\texttt{chiara.esposito@mathematik.uni-wuerzburg.de}},\\
    Institut für Mathematik \\
  Lehrstuhl für Mathematik X \\
  Universität Würzburg \\
  Campus Hubland Nord \\
  Emil-Fischer-Straße 31 \\
  97074 Würzburg \\
  Germany \\[0.3cm]
  \textbf{Jonas Schnitzer}\thanks{\texttt{jschnitzer@unisa.it}},\\
    Dipartimento di Matematica\\
  Universit\`a degli Studi di Salerno\\
  via Giovanni Paolo II, 123\\
  84084 Fisciano (SA)\\
  Italy
    \\[0.3cm]
  \textbf{Stefan Waldmann}\thanks{\texttt{stefan.waldmann@mathematik.uni-wuerzburg.de}}\\
  Institut für Mathematik \\
  Lehrstuhl für Mathematik X \\
  Universität Würzburg \\
  Campus Hubland Nord \\
  Emil-Fischer-Straße 31 \\
  97074 Würzburg \\
  Germany
}
\renewcommand{\mathbb}[1]{\mathbbm{#1}}
\newcommand{\refitem}[1] {\textit{\ref{#1}.)}}
\numberwithin{equation}{section}
\let\originalleft\left
\let\originalright\right
\renewcommand{\left}{\mathopen{}\mathclose\bgroup\originalleft}
\renewcommand{\right}{\aftergroup\egroup\originalright}
\newtheorem{lemma}{Lemma}[section]
\newtheorem{proposition}[lemma]{Proposition}
\newtheorem{theorem}[lemma]{Theorem}
\newtheorem{corollary}[lemma]{Corollary}
\newtheorem{definition}[lemma]{Definition}
\newtheorem{example}[lemma]{Example}
\newtheorem{remark}[lemma]{Remark}
\theoremstyle{nonumberplain}
\newtheorem{proof}{Proof}
 \newenvironment{lemmalist}{\begin{compactenum}[\itshape i.)]}{\end{compactenum}}
 \newenvironment{theoremlist}{\begin{compactenum}[\itshape i.)]}{\end{compactenum}}
 \newenvironment{propositionlist}{\begin{compactenum}[\itshape i.)]}{\end{compactenum}}
\newcommand\ot[2]{\stackrel{\mathclap{#1}}{#2}}
\newcommand{\uea}[1]       {\mathscr{U}({#1})}
\newcommand{\ueac}[1]      {\mathscr{U}_{\ring{C}}({#1})}
\newcommand{\twist}[1]     {\mathcal{#1}}
\newcommand{\Deg}          {\operatorname{Deg}}
\newcommand{\circs}        {\mathbin{\circ_{\pi}}}
\newcommand{\circweyl}     {\mathbin{\circ_{\scriptscriptstyle{\mathrm{Weyl}}}}}
\newcommand{\circwick}     {\mathbin{\circ_{\scriptscriptstyle{\mathrm{Wick}}}}}
\newcommand{\starwick}     {\mathbin{\star_{\scriptscriptstyle{\mathrm{Wick}}}}}
\newcommand{\tauwick}      {\tau_{\scriptscriptstyle{\mathrm{Wick}}}}
\newcommand{\ms}           {m_{\pi}}
\newcommand{\msA}          {m^\algebra{A}_{\pi}}
\newcommand{\lift}         {\mathrm{\scriptscriptstyle Lift}}
\newcommand{\CE}           {\mathrm{\scriptscriptstyle CE}}
\newcommand{\FedosovD}     {\mathscr{D}_{\mathrm{F}}}
\newcommand{\FedosovA}     {\mathscr{D}_{\algebra{A}}}
\newcommand{\FedosovTU}    {\mathscr{D}}
\newcommand{\lie}[1]          {\mathfrak{#1}}
\newcommand{\tensor}[1][{}]           {\mathbin{\otimes_{\scriptscriptstyle{#1}}}}
\newcommand{\algebra}[1]      {\mathscr{#1}}
\newcommand{\acts}            {\mathbin{\triangleright}}
\newcommand{\Cinfty}         {\mathscr{C}^\infty}
\DeclarePairedDelimiter{\Schouten}{\llbracket}{\rrbracket}
\newcommand{\argument}       {\,\cdot\,}
\newcommand{\Anti}                    {\Lambda}
\newcommand{\id}             {\mathsf{id}}
\newcommand{\ring}[1]         {\mathsf{#1}}
\newcommand{\Tensor}                  {\mathrm{T}}
\newcommand{\Sym}                     {\mathrm{S}}
\newcommand{\I}              {\mathrm{i}}
\newcommand{\E}              {\mathrm{e}}
\DeclareMathOperator{\degs}           {\mathrm{deg}_{\scriptscriptstyle\mathrm s}}
\DeclareMathOperator{\dega}           {\mathrm{deg}_{\scriptscriptstyle\mathrm a}}
\DeclareMathOperator{\insa}           {\mathrm{i}_{\scriptscriptstyle\mathrm a}}
\DeclareMathOperator{\inss}           {\mathrm{i}_{\scriptscriptstyle\mathrm s}}
\DeclareMathOperator{\ad}     {\mathrm{ad}}
\newcommand{\End}            {\operatorname{\mathsf{End}}}
\newcommand{\at}[1]          {\big|_{#1}}
\newcommand{\cc}[1]          {\overline{{#1}}}
\newcommand{\Mat}            {\mathrm{M}}
\newcommand{\Trans}          {{\mathrm{\scriptscriptstyle{T}}}}
\DeclareMathOperator{\tr}    {\mathsf{tr}}
\DeclareMathOperator{\diag}  {\mathrm{diag}}
\title{A Universal Construction of Universal Deformation Formulas,
  Drinfel'd Twists and their Positivity}
\begin{document}

%
%

\maketitle

%
%
\begin{abstract}
    In this paper we provide an explicit construction of star products
    on $\uea{\lie{g}}$-module algebras by using the Fedosov
    approach. This construction allows us to give a constructive proof
    to Drinfel'd theorem and to obtain a concrete formula for
    Drinfel'd twist.  We prove that the equivalence classes of twists
    are in one-to-one correspondence with the second
    Chevalley-Eilenberg cohomology of the Lie algebra $\lie{g}$.
    Finally, we show that for Lie algebras with Kähler structure we
    obtain a strongly positive universal deformation of $^*$-algebras
    by using a Wick-type deformation. This results in a positive
    Drinfel'd twist.
\end{abstract}

%
%

\tableofcontents

\newpage

\section{Introduction}

The concept of deformation quantization has been defined by Bayen,
Flato, Fronsdal, Lichnerowicz and Sternheimer in
\cite{bayen.et.al:1978a} based on Gerstenhaber's theory of associative
deformations of algebra \cite{gerstenhaber:1964a}.  A formal star
product on a symplectic (or Poisson) manifold $M$ is defined as a
formal associative deformation $\star$ of the algebra of smooth
functions $\Cinfty (M)$ on $M$. The existence as well as the
classification of star products has been studied in many different
settings, e.g in \cite{dewilde.lecomte:1983b, fedosov:1986a,
  fedosov:1994a, fedosov:1996a, kontsevich:2003a, nest.tsygan:1995a,
  bertelson.cahen.gutt:1997a}, see also the textbooks
\cite{esposito:2015a, waldmann:2007a} for more details in deformation
quantization. Quite parallel to this, Drinfel'd introduced the notion
of quantum groups and started the deformation of Hopf algebra, see
e.g. the textbooks \cite{kassel:1995a, chari.pressley:1994a,
  etingof.schiffmann:1998a} for a detailed discussion.

It turned out that under certain circumstances one can give simple and
fairly explicit formulas for associative deformations of algebras:
whenever a Lie algebra $\lie{g}$ acts on an associative algebra
$\algebra{A}$ by derivations, the choice of a \emph{formal Drinfel'd
  twist} $\twist{F} \in (\uea{\lie{g}} \tensor \uea{\lie{g}})[[t]]$
allows to deform $\algebra{A}$ by means of a \emph{universal
  deformation formula}
\begin{equation}
    \label{eq:TheUDF}
    a \star_{\twist{F}} b
    =
    \mu_{\algebra A} (\twist{F}\acts (a \tensor b))
\end{equation}
for $a, b \in \algebra{A}[[t]]$. Here
$\mu_{\algebra{A}}\colon \algebra{A} \tensor \algebra{A}
\longrightarrow \algebra{A}$
is the algebra multiplication and $\acts$ is the action of $\lie{g}$
extended to the universal enveloping algebra $\uea{\lie{g}}$ and then
to $\uea{\lie{g}} \tensor \uea{\lie{g}}$ acting on
$\algebra{A} \tensor \algebra{A}$. Finally, all operations are
extended $\ring{R}[[t]]$-multilinearly to formal power series.  Recall
that a formal Drinfel'd twist \cite{drinfeld:1983a, drinfeld:1988a} is
an invertible element
$\twist{F} \in (\uea{\lie{g}} \tensor \uea{\lie{g}})[[t]]$ satisfying
\begin{gather}
    \label{eq:TwistConditionI}
    (\Delta \tensor \id)(\twist{F})(\twist{F}\tensor 1)
    =
    (\id \tensor \Delta)(\twist{F})(1 \tensor \twist{F}),
    \\
    \label{eq:TwistConditionII}
    (\epsilon \tensor 1)\twist{F}
    =
    1
    =
    (1\tensor \epsilon)\twist{F} \\
    \shortintertext{and}
    \label{eq:TwistConditionIII}
    \twist{F} = 1 \tensor 1 + \mathcal{O}(t).
\end{gather}
The properties of a twist are now easily seen to guarantee that
\eqref{eq:TheUDF} is indeed an associative deformation.

Yielding the explicit formula for the deformation universally in the
algebra $\algebra{A}$, Drinfel'd twists are considered to be of great
importance in deformation theory in general, and in fact, used at many
different places. We just mention a few recent developments, certainly
not exhaustive: Giaquinto and Zhang studied the relevance of universal
deformation formulas like \eqref{eq:TheUDF} in great detail in the
seminal paper \cite{giaquinto.zhang:1998a}. Bieliavsky and Gayral
\cite{bieliavsky.gayral:2015a} used universal deformation formulas
also in a non-formal setting by replacing the notion of a Drinfel'd
twist with a certain integral kernel. This sophisticated construction
leads to a wealth of new strict deformations having the above formal
deformations as asymptotic expansions. But also beyond pure
mathematics the universal deformation formulas found applications
e.g. in the construction of quantum field theories on noncommutative
spacetimes, see e.g. \cite{aschieri.schenkel:2014a}.

In characteristic zero, there is one fundamental example of a
Drinfel'd twist in the case of an abelian Lie algebra $\lie{g}$. Here
one chooses any bivector $\pi \in \lie{g} \tensor \lie{g}$ and
considers the formal exponential
\begin{equation}
    \label{eq:WeylTwist}
    \twist{F}_{\textrm{Weyl-Moyal}}
    =
    \exp(t\pi),
\end{equation}
viewed as element in $(\uea{\lie{g}} \tensor \uea{\lie{g}})[[t]]$. An
easy verification shows that this is indeed a twist. The corresponding
universal deformation formula goes back at least till
\cite[Thm.~8]{gerstenhaber:1968a} under the name of \emph{deformation
  by commuting derivations}. In deformation quantization the
corresponding star product is the famous Weyl-Moyal star product if
one takes $\pi$ to be antisymmetric.

While this is an important example, it is not at all easy to find
explicit formulas for twists in the general non-abelian case. A
starting point is the observation, that the antisymmetric part of the
first order of a twist, $\twist{F}_1 - \mathsf{T}(\twist{F}_1)$, where
$\mathsf{T}$ is the usual flip isomorphism, is first an element in
$\Anti^2\lie{g}$ instead of $\Anti^2 \uea{\lie{g}}$, and, second, a
\emph{classical $r$-matrix}. This raises the question whether one can
go the opposite direction of a quantization: does every classical
$r$-matrix $r \in \Anti^2\lie{g}$ on a Lie algebra $\lie{g}$ arise as
the first order term of a formal Drinfel'd twist? It is now a
celebrated theorem of Drinfel'd \cite[Thm.~6]{drinfeld:1983a} that
this is true.

But even more can be said: given a twist $\twist{F}$ one can construct
a new twist by conjugating with an invertible element
$S \in \uea{\lie{g}}[[t]]$ starting with $S = 1 + \mathcal{O}(t)$ and
satisfying $\epsilon(S) = 1$. More precisely,
\begin{equation}
    \label{eq:NewTwist}
    \twist{F}' = \Delta(S)^{-1} \twist{F} (S \tensor S)
\end{equation}
turns out to be again a twist. In fact, this defines an equivalence
relation on the set of twists, preserving the semi-classical limit,
i.e. the induced $r$-matrix. In the spirit of Kontsevich's formality
theorem, and in fact building on its techniques, Halbout showed that
the equivalence classes of twists quantizing a given classical
$r$-matrix are in bijection to the equivalence classes of formal
deformations of the $r$-matrix in the sense of $r$-matrices
\cite{halbout:2006a}. In fact, this follows from Halbout's more
profound result on formality for general Lie bialgebras, the
quantization of $r$-matrices into twists is just a special case
thereof.  His theorem holds in a purely algebraic setting (in
characteristic zero) but relies heavily on the fairly inexplicit
formality theorems of Kontsevich and Tamarkin \cite{tamarkin:1998b}
which in turn require a rational Drinfel'd associator.

On the other hand, there is a simpler approach to the existence of
twists in the case of real Lie algebras: in seminal work of Drinfel'd
\cite{drinfeld:1983a} he showed that a twist is essentially the same
as a left $G$-invariant star product on a Lie group $G$ with Lie
algebra $\lie{g}$, by identifying the $G$-invariant bidifferential
operators on $G$ with elements in $\uea{\lie{g}} \tensor
\uea{\lie{g}}$. The associativity of the star product gives then
immediately the properties necessary for a twist and vice
versa. Moreover, an $r$-matrix is nothing else as a left $G$-invariant
Poisson structure, see \cite[Thm.~1]{drinfeld:1983a}. In this paper,
Drinfel'd also gives an existence proof of such $G$-invariant star
products and therefore of twists, see
\cite[Thm.~6]{drinfeld:1983a}. His argument uses the canonical star
product on the dual of a central extension of the Lie algebra by the
cocycle defined by the (inverse of the) $r$-matrix, suitably pulled
back to the Lie group, see also
Remark~\ref{remark:DrinfeldConstruction} for further details.

The equivalence of twists translates into the usual $G$-invariant
equivalence of star products as discussed in
\cite{bertelson.bieliavsky.gutt:1998a}.  Hence one can use the
existence (and classification) theorems for invariant star products to
yield the corresponding theorems for twists
\cite{bieliavsky:2016a}. This is also the point of view taken by
Dolgushev et al. in \cite{dolgushev.isaev.lyakhovich.sharapov:2002a},
where the star product is constructed in a way inspired by Fedosov's
construction of star products on symplectic manifolds.

A significant simplification concerning the existence comes from the
observation that for every $r$-matrix $r \in \Anti^2 \lie{g}$ there is
a Lie subalgebra of $\lie{g}$, namely
\begin{equation}
    \label{eq:ESSubalgebra}
    \lie{g}_r
    =
    \left\{
        (\alpha \tensor \id)(r)
        \; \big| \;
        \alpha \in \lie{g}^*
    \right\},
\end{equation}
such that $r \in \Anti^2 \lie{g}_r$ and $r$ becomes
\emph{non-degenerate} as an $r$-matrix on this Lie subalgebra
\cite[Prop.~3.2-3.3]{etingof.schiffmann:1998a}. Thus it will always be
sufficient to consider non-degenerate classical $r$-matrices when
interested in the existence of twists. For the classification this is
of course not true since a possibly degenerate $r$-matrix might be
deformed into a non-degenerate one only in higher orders: here one
needs Halbout's results for possibly degenerate $r$-matrices. However,
starting with a non-degenerate $r$-matrix, one will have a much
simpler classification scheme as well.

The aim of this paper is now twofold: On the one hand, we want to give
a direct construction to obtain the universal deformation formulas for
algebras acted upon by a Lie algebra with non-degenerate
$r$-matrix. This will be obtained in a purely algebraic fashion for
sufficiently nice Lie algebras and algebras over a commutative ring
$\ring{R}$ containing the rationals. Our approach is based on a
certain adaption of the Fedosov construction of symplectic star
products, which is in some sense closer to the original Fedosov
construction compared to the approach of
\cite{dolgushev.isaev.lyakhovich.sharapov:2002a} but yet completely
algebraic. More precisely, the construction will not involve a twist
at all but just the classical $r$-matrix. Moreover, it will be
important to note that we can allow for a non-trivial symmetric part
of the $r$-matrix, provided a certain technical condition on it is
satisfied. This will produce deformations with more specific features:
as in usual deformation quantization one is not only interested in the
Weyl-Moyal like star products, but certain geometric circumstances
require more particular star products like Wick-type star products on
Kähler manifolds \cite{karabegov:2013a, karabegov:1996a,
  bordemann.waldmann:1997a} or standard-ordered star products on
cotangent bundles \cite{bordemann.neumaier.waldmann:1998a,
  bordemann.neumaier.pflaum.waldmann:2003a}.

On the other hand, we give an alternative construction of Drinfel'd
twists, again in the purely algebraic setting, based on the above
correspondence to star products but avoiding the techniques from
differential geometry completely in order to be able to work over a
general field of characteristic zero. We also obtain a classification
of the above restricted situation where the $r$-matrix is
non-degenerate.

In fact, both questions turn out to be intimately linked since
applying our universal deformation formula to the tensor algebra of
$\uea{\lie{g}}$ will yield a deformation of the tensor product which
easily allows to construct the twist. This is in so far remarkable
that the tensor algebra is of course rigid, the deformation is
equivalent to the undeformed tensor product, but the deformation is
not the identity, allowing therefore to consider nontrivial products
of elements in $\Tensor^\bullet(\uea{\lie{g}})$.

We show that the universal deformation formula we construct in fact
coincides with \eqref{eq:TheUDF} for the twist we construct. However,
it is important to note that the detour via the twist is not needed to
obtain the universal deformation of an associative algebra.

Finally, we add the notion of positivity: this seems to be new in the
whole discussion of Drinfel'd twists and universal deformation
formulas so far. To this end we consider now an ordered ring
$\ring{R}$ containing $\mathbb{Q}$ and its complex version $\ring{C} =
\ring{R}(\I)$ with $\I^2 = -1$, and $^*$-algebras over $\ring{C}$ with
a $^*$-action of the Lie algebra $\lie{g}$, which is assumed to be a
Lie algebra over $\ring{R}$ admitting a Kähler structure. Together
with the non-degenerate $r$-matrix we can define a Wick-type universal
deformation which we show to be \emph{strongly positive}: every
undeformed positive linear functional stays positive also for the
deformation. Applied to the twist we conclude that the Wick-type twist
is a convex series of positive elements.

The paper is organized as follows. In Section~\ref{sec:FedosovSetUp}
we explain the elements of the (much more general) Fedosov
construction which we will
need. Section~\ref{sec:UniversalDeformationFormula} contains the
construction of the universal deformation formula. Here not only the
deformation formula will be universal for all algebras $\algebra{A}$
but also the construction itself will be universal for all Lie
algebras $\lie{g}$. In
Section~\ref{sec:UniversalDeformationFormulaTwist} we construct the
Drinfel'd twist while Section~\ref{sec:Classification} contains the
classification in the non-degenerate case. Finally,
Section~\ref{sec:HermitianCPDeformations} discusses the positivity of
the Wick-type universal deformation formula. In two appendices we
collect some more technical arguments and proofs.  The results of this
paper are partially based on the master thesis \cite{schnitzer:2016a}.

For symplectic manifolds with suitable polarizations one can define
various types of star products with separation of variables
\cite{karabegov:1996a, karabegov:2013a, bordemann.waldmann:1997a,
  bordemann.neumaier.waldmann:1999a, donin:2003a,
  bordemann.neumaier.waldmann:1998a,
  bordemann.neumaier.pflaum.waldmann:2003a} which have specific
properties adapted to the polarization. The general way to construct
(and classify) them is to modify the Fedosov construction by adding
suitable symmetric terms to the fiberwise symplectic Poisson
tensor. We have outlined that this can be done for twists as well in
the Kähler case, but there remain many interesting situations. In
particular a more cotangent-bundle like polarization might be
useful. We plan to come back to these questions in a future project.

\noindent
\textbf{Acknowledgements:} We would like to thank Pierre Bieliavsky,
Kenny De Commer, Alexander Karabegov, and Thorsten Reichert for the
discussions and useful suggestions. Moreover, we would like to thank
the referee for many useful comments and remarks.

%
%

\section{The Fedosov Set-Up}
\label{sec:FedosovSetUp}

In the following we present the Fedosov approach in the particular
case of a Lie algebra $\lie{g}$ with a non-degenerate $r$-matrix $r$.
We follow the presentation of Fedosov approach given in
\cite{waldmann:2007a} but replacing differential geometric concepts by
algebraic version in order to be able to treat not only the real case.
The setting for this work will be to assume that $\lie{g}$ is a Lie
algebra over a commutative ring $\ring{R}$ containing the rationals
$\mathbb{Q} \subseteq \ring{R}$ such that $\lie{g}$ is a
finite-dimensional free module.

We denote by $\{e_1, \ldots, e_n\}$ a basis of $\lie{g}$ and by
$\{e^1, \ldots, e^n\}$ its dual basis of $\lie{g}^*$.  We also assume
the $r$-matrix $r \in \Anti^2\lie{g}$ to be non-degenerate in the
strong sense from the beginning, since, at least in the case of
$\ring{R}$ being a field, we can replace $\lie{g}$ by $\lie{g}_r$ from
\eqref{eq:ESSubalgebra} if necessary. Hence $r$ induces the
\emph{musical isomorphism}
\begin{equation}
    \label{eq:MusicalIso}
    \sharp\colon
    \lie{g}^* \longrightarrow \lie{g}
\end{equation}
by paring with $r$, the inverse of which we denote by $\flat$ as
usual.  Then the defining property of an $r$-matrix is
$\Schouten{r, r} = 0$, where $\Schouten{\argument, \argument}$ is the
unique extension of the Lie bracket to $\Anti^\bullet \lie{g}$ turning
the Grassmann algebra into a Gerstenhaber algebra. Since we assume $r$
to be (strongly) non-degenerate have the inverse
$\omega \in \Anti^2 \lie{g}^*$ of $r$ and $\Schouten{r, r} = 0$
becomes equivalent to the linear condition $\delta_\CE\omega = 0$,
where $\delta_\CE$ is the usual Chevalley-Eilenberg
differential. Moreover, the musical isomorphisms intertwine
$\delta_\CE$ on $\Anti^\bullet \lie{g}^*$ with the differential
$\Schouten{r, \argument}$ on $\Anti^\bullet \lie{g}$. We refer to
$\omega$ as the induced symplectic form.
\begin{remark}
    \label{remark:WhyRings}%
    For the Lie algebra $\lie{g}$ there seems to be little gain in
    allowing a ring $\ring{R}$ instead of a field $\mathbb{k}$ of
    characteristic zero, as we have to require $\lie{g}$ to be a free
    module and \eqref{eq:MusicalIso} to be an isomorphism. However,
    for the algebras which we would like to deform there will be no
    such restrictions later on. Hence allowing for algebras over rings
    in the beginning seems to be the cleaner way to do it, since after
    the deformation we will arrive at an algebra over a ring, namely
    $\ring{R}[[t]]$ anyway.
\end{remark}
\begin{definition}[Formal Weyl algebra]
    The algebra $\left(\prod_{k=0}^\infty \Sym^k\lie{g}^* \tensor
        \Anti^\bullet \lie{g}^*\right)[[t]]$ is called the formal Weyl
    algebra where the product $\mu$ is defined by
    \begin{equation}
        \label{eq:UndeformedProduct}
        (f \tensor \alpha) \cdot (g \tensor \beta)
        =
        \mu(f \tensor \alpha, g \tensor \beta)
        =
        f\vee g\tensor\alpha\wedge\beta.
    \end{equation}
    for any factorizing tensors $f \tensor \alpha, g \tensor \beta \in
    \mathcal{W} \tensor \Anti^\bullet$ and extended
    $\ring{R}[[t]]$-bilinearly. We write $\mathcal{W} =
    \prod_{k=0}^\infty \Sym^k \lie{g}^* [[t]]$ and $\Anti^\bullet =
    \Anti^\bullet \lie{g}^*[[t]]$.
\end{definition}
Since $\lie{g}$ is assumed to be finite-dimensional we have
\begin{equation}
    \label{eq:WtensorLambdaIsWtensorLambda}
    \mathcal{W} \tensor \Lambda^\bullet
    =
    \left(
        \prod_{k=0}^\infty \Sym^k\lie{g}^* \tensor
        \Anti^\bullet \lie{g}^*
    \right)[[t]].
\end{equation}

Since we will deform this product $\mu$ we shall refer to $\mu$ also
as the \emph{undeformed product} of
$\mathcal{W} \tensor \Anti^\bullet$. It is clear that $\mu$ is
associative and graded commutative with respect to the antisymmetric
degree. In order to handle this and various other degrees, it is
useful to introduce the following degree maps
\begin{equation}
    \label{eq:DegreeMaps}
    \degs, \dega, \deg_t\colon
    \mathcal{W} \tensor \Anti^\bullet
    \longrightarrow
    \mathcal{W} \tensor \Anti^\bullet,
\end{equation}
defined by the conditions
\begin{equation}
    \label{eq:DegsDega}
    \degs(f \tensor \alpha)
    =
    k f \tensor \alpha
    \quad
    \textrm{and}
    \quad
    \dega(f \tensor\alpha)
    =
    \ell f \tensor \alpha
\end{equation}
for $f \in \Sym^k\lie{g}^*$ and $\alpha \in \Anti^\ell \lie{g}^*$. We
extend these maps to formal power series by $\ring{R}[[t]]$-linearity.
Then we can define the degree map $\deg_t$ by
\begin{equation}
    \label{eq:Degt}
    \deg_t
    =
    t\frac{\partial}{\partial t},
\end{equation}
which is, however, not $\ring{R}[[t]]$-linear.  Finally, the
\emph{total degree} is defined by
\begin{equation}
    \label{eq:TotalDegree}
    \Deg
    =
    \degs + 2\deg_t.
\end{equation}
It will be important that all these maps are derivations of the
undeformed product $\mu$ of $\mathcal{W} \tensor \Anti^\bullet$.  We
denote by
\begin{equation}
    \label{eq:FiltrationDeg}
    \mathcal{W}_k \tensor \Anti^\bullet
    =
    \bigcup_{r \ge k}
    \left\{
        a \in \mathcal{W} \tensor \Anti^\bullet
        \; \big| \;
        \Deg a = r a
    \right\}
\end{equation}
the subspace of elements which have total degree bigger or equal to
$+k$. This endows $\mathcal{W} \tensor \Anti^\bullet$ with a complete
filtration, a fact which we shall frequently use in the
sequel. Moreover, the filtration is compatible with the undeformed
product \eqref{eq:UndeformedProduct} in the sense that
\begin{equation}
    \label{eq:FiltrationProduct}
    ab \in
    \mathcal{W}_{k + \ell} \tensor \Anti^\bullet
    \quad
    \textrm{for}
    \quad
    a \in \mathcal{W}_k \tensor \Anti^\bullet
    \textrm{ and }
    b \in \mathcal{W}_\ell \tensor \Anti^\bullet.
\end{equation}

Following the construction of Fedosov we define the operators $\delta$
and $\delta^*$ by
\begin{equation}
    \label{eq:DeltaDeltaStar}
    \delta
    =
    e^i \wedge \inss(e_i)
    \quad
    \textrm{and}
    \quad
    \delta^*
    =
    e^i \vee \insa(e_i),
\end{equation}
where $\inss$ and $\insa$ are the symmetric and antisymmetric
insertion derivations.  Both maps are graded derivations of $\mu$ with
respect to the antisymmetric degree: $\delta$ lowers the symmetric
degree by one and raises the antisymmetric degree by one, for
$\delta^*$ it is the other way round.  For homogeneous elements
$a \in \Sym^k\lie{g}^* \tensor \Anti^\ell \lie{g}^*$ we define by
\begin{equation}
    \label{eq:deltaInvDef}
    \delta^{-1}(a)
    =
    \begin{cases}
        0,
        & \textrm{if } k + \ell = 0 \\
        \frac{1}{k + \ell}\delta^*(a)
        & \textrm{else,}
  \end{cases}
\end{equation}
and extend this $\ring{R}[[t]]$-linearly.  Notice that this map is not
the inverse of $\delta$, instead we have the following properties:
\begin{lemma}
    \label{lem:Poincare}%
    For $\delta$, $\delta^*$ and $\delta^{-1}$ defined above, we have
    $\delta^2 = (\delta^*)^2 = (\delta^{-1})^2 = 0$ and
    \begin{align*}
        \label{eq:Poincare}
        \delta\delta^{-1} + \delta^{-1}\delta + \sigma
        =
        \id,
    \end{align*}
    where $\sigma$ is the projection on the symmetric and
    antisymmetric degree zero.
\end{lemma}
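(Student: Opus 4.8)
The plan is to verify all claimed identities by evaluating both sides on homogeneous factorizing elements $a = f \tensor \alpha$ with $f \in \Sym^k\lie{g}^*$ and $\alpha \in \Anti^\ell\lie{g}^*$, and then extend $\ring{R}[[t]]$-linearly (and by continuity with respect to the filtration). The nilpotency statements come first. For $\delta^2 = 0$ one computes $\delta\delta = e^i \wedge \inss(e_i) e^j \wedge \inss(e_j)$; since $\inss(e_i)$ is a symmetric insertion it commutes with the wedge factors, so $\delta^2 = (e^i \wedge e^j) \otimes \inss(e_i)\inss(e_j)$ acting appropriately, and this vanishes because $e^i \wedge e^j$ is antisymmetric in $i,j$ while $\inss(e_i)\inss(e_j)$ is symmetric in $i,j$ (symmetric insertions commute). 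The identity $(\delta^*)^2 = 0$ is the mirror statement: $\delta^* = e^i \vee \insa(e_i)$, the antisymmetric insertions $\insa(e_i)$ anticommute, and $e^i \vee e^j$ is symmetric, so again the contraction of a symmetric with an antisymmetric tensor is zero. Since $\delta^{-1}$ is just a scalar multiple of $\delta^*$ on each bidegree, $(\delta^{-1})^2 = 0$ follows from $(\delta^*)^2 = 0$ (one must only note that applying $\delta^*$ changes the bidegree, but the scalar prefactors are harmless and the underlying operator composition still vanishes).

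Next I would prove the homotopy formula. Fix $a = f \tensor \alpha$ homogeneous of symmetric degree $k$ and antisymmetric degree $\ell$. If $k + \ell = 0$, then $a$ is in the image of $\sigma$, and $\delta a = 0 = \delta^{-1} a$, so $\delta\delta^{-1} + \delta^{-1}\delta + \sigma$ applied to $a$ gives $a$, as claimed. If $k + \ell \ge 1$, then $\sigma a = 0$, and I must show $(\delta\delta^{-1} + \delta^{-1}\delta)a = a$. Here $\delta^{-1}\delta a$ involves the factor $\frac{1}{(k-1)+(\ell+1)} = \frac{1}{k+\ell}$ (since $\delta$ lowers symmetric degree by one and raises antisymmetric degree by one), and $\delta\delta^{-1}a$ involves the factor $\frac{1}{k+\ell}$ directly. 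So it suffices to establish the operator identity $\delta\delta^* + \delta^*\delta = (\degs + \dega)\,\id$ on $\Sym^k\lie{g}^* \tensor \Anti^\ell\lie{g}^*$, i.e. that the anticommutator $\{\delta, \delta^*\}$ acts as multiplication by $k+\ell$. This is the standard super-Poincaré/Koszul computation: expand
\begin{equation*}
    \{\delta,\delta^*\}
    =
    e^i \wedge \inss(e_i)\, e^j \vee \insa(e_j)
    +
    e^j \vee \insa(e_j)\, e^i \wedge \inss(e_i),
\end{equation*}
and use the canonical (anti)commutation relations $\inss(e_i) e^j \vee = e^j \vee \inss(e_i) + \delta_i^j$ on the symmetric side and $\insa(e_j) e^i \wedge = -\, e^i \wedge \insa(e_j) + \delta_j^i$ on the antisymmetric side (together with the fact that symmetric operations commute with antisymmetric operations). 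Pushing the insertion operators past the creation operators produces cross terms that cancel pairwise, leaving exactly $e^i \vee \inss(e_i) + e^j \wedge \insa(e_j) = \degs + \dega$ by the Euler identities for the symmetric and exterior algebras.

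The main obstacle is purely bookkeeping: keeping the Koszul signs straight when commuting the mixed operators $e^i \wedge \inss(e_i)$ and $e^j \vee \insa(e_j)$ past each other, since $\delta$ and $\delta^*$ are both odd for the antisymmetric $\mathbb{Z}_2$-grading but have opposite effects on the symmetric grading. The cleanest way to avoid errors is to treat the symmetric sector and the antisymmetric sector separately — they act on tensor factors that commute — so that $\{\delta,\delta^*\}$ splits as (something acting only symmetrically) $\otimes \id + \id \otimes$ (something acting only antisymmetrically), each piece being a textbook Euler-operator identity. Once the anticommutator identity $\{\delta,\delta^*\} = \degs + \dega$ is in hand, the statement of the lemma follows immediately by the degree-counting argument above, and extension to all of $\mathcal{W} \tensor \Anti^\bullet$ and then to formal power series in $t$ is automatic by $\ring{R}[[t]]$-linearity and completeness of the filtration.
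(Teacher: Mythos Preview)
Your proof is correct. The paper does not actually give a proof of this lemma; it merely remarks afterward that this is ``the polynomial version of the Poincar\'e lemma: $\delta$ corresponds to the exterior derivative and $\delta^{-1}$ is the standard homotopy.'' Your argument---establishing the anticommutator identity $\{\delta,\delta^*\} = \degs + \dega$ via the canonical (anti)commutation relations and Euler identities, and then dividing by $k+\ell$ on each homogeneous bidegree---is precisely the standard computation underlying that remark, so your approach is exactly what the paper has in mind.
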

In fact, this can be seen that the polynomial version of the Poincaré
lemma: $\delta$ corresponds to the exterior derivative and
$\delta^{-1}$ is the standard homotopy.

The next step consists in deforming the product $\mu$ into a
noncommutative one: we define the \emph{star product}
$\circs$ for $a, b \in \mathcal{W} \tensor \Anti^\bullet$ by
\begin{equation}
    \label{eq:WeylMoyalStarProduct}
    a \circs  b
    =
    \mu \circ \E^{\frac{t}{2}\mathcal{P}}(a\tensor b),
    \quad
    \textrm{where}
    \quad
    \mathcal{P} = \pi^{ij} \inss(e_i) \tensor \inss(e_j),
\end{equation}
for $\pi^{ij} = r^{ij} + s^{ij}$, where $r^{ij}$ are the coefficients
of the $r$-matrix and $s^{ij} = s(e^i, e^j) \in \ring{R}$ are the
coefficients of a \emph{symmetric} bivector $s \in \Sym^2 \lie{g}$.
When taking $s = 0$ we denote $\circs$ simply by $\circweyl$.
\begin{proposition}
    \label{prop:StarProductAss}%
    The star product $\circs$ is an associative
    $\ring{R}[[t]]$-bilinear product on
    $\mathcal{W} \tensor \Anti^\bullet$ deforming $\mu$ in zeroth
    order of $t$.  Moreover, the maps $\delta$, $\dega$, and $\Deg$
    are graded derivations of $\circs$ of antisymmetric degree $+1$
    for $\delta$ and $0$ for $\dega$ and $\Deg$, respectively.
\end{proposition}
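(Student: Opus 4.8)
The plan is to verify each claim by reducing it to elementary properties of the operator $\mathcal{P}$ appearing in the definition of $\circs$. First I would observe that $\mathcal{P} = \pi^{ij} \inss(e_i) \tensor \inss(e_j)$ is a well-defined $\ring{R}[[t]]$-linear operator on $(\mathcal{W} \tensor \Anti^\bullet) \tensor (\mathcal{W} \tensor \Anti^\bullet)$ which, on each factor, lowers the symmetric degree by one; hence on a pair of elements of fixed symmetric degrees only finitely many powers of $\mathcal{P}$ act nontrivially, so the exponential series $\E^{\frac{t}{2}\mathcal{P}}$ converges in the $t$-adic topology and $\circs$ is well-defined. Moreover, since $\inss(e_i) \tensor \inss(e_j)$ raises $\deg_t$ by zero but comes with a prefactor $t$, every application of $\mathcal{P}$ raises the $t$-order by one, so $\circs$ agrees with $\mu$ modulo $t$, giving the deformation claim.

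For associativity, the key point is that the operators $\inss(e_i)$ on different tensor factors commute, and that $\mu$ is associative and (graded) commutative. Concretely I would check that
\begin{equation*}
    (\mathcal{P} \tensor \id) \circ (\id \tensor \mu)
    \quad\text{and}\quad
    (\id \tensor \mathcal{P}) \circ (\mu \tensor \id)
\end{equation*}
together with the ``outer'' copies of $\mathcal{P}$ assemble into the statement that, setting $\mathcal{P}_{12}, \mathcal{P}_{23}, \mathcal{P}_{13}$ for the operator acting on the indicated pairs of the three tensor factors, one has $\mathcal{P}_{12}$, $\mathcal{P}_{23}$ and $\mathcal{P}_{13}$ pairwise commuting, and $\mu_{12}\circ \mathcal{P}_{13} = \mathcal{P}_{(12)3}\circ\mu_{12}$ type intertwining identities hold because $\inss$ is a derivation of $\vee$. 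From these, both $(a \circs b)\circs c$ and $a \circs (b \circs c)$ equal $\mu^{(3)} \circ \E^{\frac{t}{2}(\mathcal{P}_{12} + \mathcal{P}_{13} + \mathcal{P}_{23})}$ applied to $a \tensor b \tensor c$, proving associativity. This is the standard Weyl--Moyal argument and is essentially formal once the commutation relations are recorded; here it is even slightly easier than the symplectic case because the coefficients $\pi^{ij}$ are constants in $\ring{R}$ rather than functions, so no curvature-type corrections appear.

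For the derivation properties, I would treat the three operators uniformly: each of $\delta$, $\dega$, $\Deg$ is a graded derivation of $\mu$ (for $\Deg$ and $\dega$ an ordinary derivation, of antisymmetric degree $0$; for $\delta$ a graded derivation of antisymmetric degree $+1$), so it suffices to show each commutes appropriately with $\mathcal{P}$ in the sense that $D \circ \mathcal{P} = \mathcal{P} \circ (D \tensor \id + \id \tensor D)$ with the relevant signs. For $\dega$ and $\Deg = \degs + 2\deg_t$ this is immediate: $\mathcal{P}$ does not change the antisymmetric degree, it lowers $\degs$ by $2$ in total while raising $\deg_t$ by $1$, so $\Deg$ commutes with $\mathcal{P}$, hence with $\E^{\frac{t}{2}\mathcal{P}}$, and combining with $\Deg$ being a $\mu$-derivation yields that $\Deg$ is a $\circs$-derivation; likewise for $\dega$. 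For $\delta = e^i \wedge \inss(e_i)$ the only thing to check is that $\inss(e_k)$ (acting, say, on the first factor) commutes with $\mathcal{P} = \pi^{ij}\inss(e_i)\tensor\inss(e_j)$, which holds because symmetric insertions commute among themselves; the exterior-multiplication part $e^i\wedge$ only affects the $\Anti^\bullet$ factor on which $\mathcal{P}$ acts trivially, so one gets the graded Leibniz rule with the sign governed by the antisymmetric degree. I do not expect a genuine obstacle here: the whole proposition is bookkeeping of commutators of insertion derivations plus the $t$-adic convergence remark. The one place demanding a little care is getting the signs right in the graded Leibniz rule for $\delta$, since $\delta$ has odd antisymmetric degree and $\circs$ is built from a $\mu$ that is graded commutative in that degree; I would handle this by checking it on factorizing elements $f \tensor \alpha$ and extending by $\ring{R}[[t]]$-bilinearity and filtration completeness.
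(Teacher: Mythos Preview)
Your proposal is correct and follows essentially the same approach as the paper: associativity is deduced from the pairwise commutation of the symmetric insertion derivations (the paper simply cites Gerstenhaber's deformation-by-commuting-derivations theorem for this), and the derivation properties of $\delta$, $\dega$, and $\Deg$ are handled as direct verifications using how $\mathcal{P}$ interacts with the various degrees. You have merely unpacked in detail what the paper summarizes in two sentences.
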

\begin{proof}
    The associativity follows from the fact that the insertion
    derivations are commuting, see
    \cite[Thm.~8]{gerstenhaber:1968a}. The statement about $\delta$,
    $\dega$ and $\Deg$ are immediate verifications.
\end{proof}

Next, we will need the graded commutator with respect to the
antisymmetric degree, denoted by
\begin{equation}
    \label{eq:GradedCommutator}
    \ad(a)(b)
    =
    [a, b]
    =
    a \circs b - (-1)^{k\ell} b \circs a,
\end{equation}
for any $a \in \mathcal{W} \tensor \Anti^k$ and $b \in
\mathcal{W}\tensor \Anti^\ell$ and extended
$\mathbb{K}[[t]]$-bilinearly as usual. Since $\circs$ deforms the
graded commutative product $\mu$, all graded commutators $[a, b]$ will
vanish in zeroth order of $t$. This allows to define graded
derivations $\frac{1}{t} \ad(a)$ of $\circs$.

\begin{lemma}
    \label{lem:Center}%
    An element $a \in \mathcal{W} \tensor \Anti^\bullet$ is central,
    that is $\ad(a) = 0$, if and only if $\degs(a) = 0$.
\end{lemma}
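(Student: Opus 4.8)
The plan is to compute the graded commutator $\ad(a)(b) = a \circs b - (-1)^{k\ell} b \circs a$ explicitly using the definition \eqref{eq:WeylMoyalStarProduct} and extract its leading-order behaviour in $t$. Since $\circs$ deforms the graded commutative product $\mu$, the zeroth order of $\ad(a)(b)$ vanishes, and the first-order term in $t$ is governed by the Poisson-like bracket coming from $\mathcal{P} = \pi^{ij}\inss(e_i)\tensor\inss(e_j)$. Concretely, for homogeneous $a, b$ one finds $\ad(a)(b) = t\,\pi^{ij}\big(\inss(e_i)a\big)\vee\wedge\big(\inss(e_j)b\big) \pm (\text{flip}) + \mathcal{O}(t^2)$, where the antisymmetrization of $\pi = r + s$ in the two slots of the commutator kills the symmetric part $s$ and leaves only the antisymmetric $r$-part contributing at this order — this is the standard observation that the commutator only sees the antisymmetric part of the fiberwise bivector. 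The key point is that $\inss(e_i)a = 0$ for all $i$ precisely when $\degs(a) = 0$.

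For the easy direction, suppose $\degs(a) = 0$. Then $a$ has no symmetric-degree component, so $\inss(e_i)a = 0$ for every $i$, hence $\mathcal{P}(a \tensor b) = 0$ and likewise $\mathcal{P}(b \tensor a) = 0$; more generally every power $\mathcal{P}^k$ with $k \ge 1$ annihilates $a \tensor b$ and $b \tensor a$ because each factor $\inss(e_i)$ acting in the $a$-slot gives zero. Therefore $a \circs b = \mu(a \tensor b) = a \cdot b$ and $b \circs a = \mu(b \tensor a)$, and since $\mu$ is graded commutative in the antisymmetric degree we get $\ad(a)(b) = 0$ for all $b$, i.e. $a$ is central.

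For the converse, assume $a$ is central and write $a = \sum_{k \ge 0} a_k$ with $a_k$ of symmetric degree $k$ (each $a_k$ still possibly a formal power series in $t$ with values in $\Sym^k\lie{g}^*\tensor\Anti^\bullet$). Let $k_0$ be the smallest index with $a_{k_0} \ne 0$ and suppose $k_0 \ge 1$; we derive a contradiction. Pair $a$ against a suitable test element, e.g. take $b = e^{j}$ (symmetric degree one, antisymmetric degree zero): then the first-order term of $\frac{1}{t}\ad(a)(b)$ equals $\pi^{ij}\inss(e_i)a$ up to sign, and more usefully one can iterate — applying $\frac{1}{t}\ad(\argument)(e^{j_1})$ repeatedly lowers the symmetric degree one step at a time and, because $r$ (hence $\pi$) is non-degenerate, the map $a_{k_0} \mapsto \big(\pi^{i j}\inss(e_i)a_{k_0}\big)_j$ is injective on $\Sym^{k_0}\lie{g}^*$ for $k_0 \ge 1$. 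Hence centrality forces $a_{k_0} = 0$, contradicting minimality; so $k_0$ cannot be $\ge 1$ and $\degs(a) = 0$. Alternatively, and more cleanly, one observes that $\frac{1}{t}\ad(a)$ acts on the symmetric-degree-one generators $e^j$ by $e^j \mapsto -\pi^{ij}\inss(e_i)a + \mathcal{O}(t)$ and that non-degeneracy of $\pi$ (which holds because the symmetric perturbation $s$ does not affect non-degeneracy of $r+s$ at leading order — indeed $r$ itself is non-degenerate) makes this vanish for all $j$ only if all $\inss(e_i)a = 0$, which is exactly $\degs(a) = 0$.

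The main obstacle is the converse direction, specifically making precise that "central" — which a priori only says $\ad(a)(b) = 0$ for all $b$, including the effect of all higher orders in $t$ — forces the vanishing of the leading symmetric component. The clean way around this is to use the total-degree filtration \eqref{eq:FiltrationDeg} together with the fact that $\frac{1}{t}\ad(a)$ is a well-defined filtration-respecting graded derivation (as noted after \eqref{eq:GradedCommutator}): one argues order by order in the total degree $\Deg$, at each stage reducing to the leading-order Poisson bracket computation where non-degeneracy of $r$ gives injectivity. I expect this bookkeeping to be short once the leading-order formula for $\ad$ is written down, since $\inss(e_i)$ restricted to $\Sym^{\ge 1}\lie{g}^*$ is "faithful" in the sense that $\inss(e_i)a = 0$ for all $i$ implies $a \in \Sym^0$.
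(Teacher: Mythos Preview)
The paper states this lemma without proof, treating it as a routine verification. Your strategy is correct and the easy direction is clean; for the converse you have the right idea --- test against $b = e^j$ --- but you are overcomplicating matters and introducing minor inaccuracies along the way.

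The simplification you are missing is that since $e^j$ has symmetric degree~$1$, every power $\mathcal{P}^m$ with $m \ge 2$ annihilates both $a \tensor e^j$ and $e^j \tensor a$. Hence
\[
    [a, e^j]
    =
    \tfrac{t}{2}\bigl(\pi^{ij} - \pi^{ji}\bigr)\,\inss(e_i)a
    =
    t\, r^{ij}\,\inss(e_i)a
\]
holds \emph{exactly}, not merely to leading order --- there is no $\mathcal{O}(t)$ remainder to control. Consequently $\ad(a) = 0$ immediately yields $r^{ij}\inss(e_i)a = 0$ for all~$j$, and the assumed non-degeneracy of~$r$ (invertibility of the matrix $(r^{ij})$) forces $\inss(e_i)a = 0$ for all~$i$, which is precisely $\degs(a) = 0$. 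No iteration, no decomposition of~$a$ by symmetric degree, and no induction over the total-degree filtration are needed.

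Two small corrections: it is $r$, not $\pi$, that appears in the commutator and whose non-degeneracy you invoke --- the symmetric part~$s$ drops out automatically under antisymmetrization, so there is no reason to argue about whether $\pi = r+s$ is non-degenerate. And since the displayed identity is exact, your final paragraph about bookkeeping via the $\Deg$-filtration can be deleted entirely.
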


By definition, a \emph{covariant derivative} is an arbitrary bilinear
map
\begin{equation}
    \label{eq:CovariantDerivative}
    \nabla\colon
    \lie{g} \times \lie{g} \ni (X, Y)
    \; \mapsto \;
    \nabla_X Y \in \lie{g}.
\end{equation}
The idea is that in the geometric interpretation the covariant
derivative is uniquely determined by its values on the left invariant
vector fields: we want an invariant covariant derivative and hence it
should take values again in $\lie{g}$. An arbitrary covariant
derivative is called \emph{torsion-free} if
\begin{equation}
    \label{eq:TorsionFree}
    \nabla_X Y - \nabla_Y X - [X, Y] = 0
\end{equation}
for all $X, Y \in \lie{g}$. Having a covariant derivative, we can
extend it to the tensor algebra over $\lie{g}$ by requiring the maps
\begin{equation}
    \label{eq:nablaXTensors}
    \nabla_X\colon
    \Tensor^\bullet \lie{g} \longrightarrow \Tensor^\bullet \lie{g}
\end{equation}
to be derivations for all $X \in \lie{g}$. We also extend $\nabla_X$
to elements in the dual by
\begin{equation}
    \label{eq:nablaXDual}
    (\nabla_X \alpha)(Y) = - \alpha(\nabla_X Y)
\end{equation}
for all $X, Y \in \lie{g}$ and $\alpha \in \lie{g}^*$. Finally, we can
extend $\nabla_X$ to $\Tensor^\bullet \lie{g}^*$ as a derivation,
too. Acting on symmetric or antisymmetric tensors, $\nabla_X$ will
preserve the symmetry type and yields a derivation of the $\vee$- and
$\wedge$-products, respectively. The fact that we extended $\nabla$ as
a derivation in a way which is compatible with natural pairings will
lead to relations like
\begin{equation}
    \label{eq:CommutatorNablaInss}
    [\nabla_X, \inss(Y)] = \inss(\nabla_X Y)
\end{equation}
for all $X, Y \in \lie{g}$ as one can easily check on generators.

Sometimes it will be advantageous to use the basis of $\lie{g}$ for
computations. With respect to the basis we define
the \emph{Christoffel symbols}
\begin{equation}
    \label{eq:ChristoffelSymbols}
    \Gamma_{ij}^k = e^k(\nabla_{e_i} e_j)
\end{equation}
of a covariant derivative, where $i, j, k = 1, \ldots, n$. Clearly,
$\nabla$ is uniquely determined by its Christoffel symbols. Moreover,
$\nabla$ is torsion-free iff
\begin{equation}
    \label{eq:ChristoffelTorsion}
    \Gamma^k_{ij} - \Gamma^k_{ji} = C_{ij}^k
\end{equation}
with the usual structure constants
$C_{ij}^k = e^k([e_i, e_j]) \in \ring{R}$ of the Lie algebra
$\lie{g}$.

As in symplectic geometry, the Hess trick \cite{hess:1981a} shows the
existence of a \emph{symplectic} torsion-free covariant derivative:
\begin{proposition}[Hess trick]
    \label{prop:HessTrick}%
    Let $(\lie{g}, r)$ be a Lie algebra with non-degenerate $r$-matrix
    $r$ and inverse $\omega$. Then there exists a torsion-free
    covariant derivative $\nabla$ such that for all $X \in \lie{g}$ we
    have
    \begin{equation}
        \label{eq:HessTrick}
        \nabla_X \omega = 0
        \quad
        \textrm{and}
        \quad
        \nabla_X r = 0.
    \end{equation}
\end{proposition}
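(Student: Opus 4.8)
The plan is to transcribe the classical symplectic Hess trick into this algebraic setting. First I would produce \emph{some} torsion-free covariant derivative $\widetilde\nabla$ to start from; the simplest choice is $\widetilde\nabla_X Y := \tfrac{1}{2}[X,Y]$, i.e. Christoffel symbols $\widetilde\Gamma^k_{ij} = \tfrac12 C^k_{ij}$, which satisfies \eqref{eq:ChristoffelTorsion} and is thus torsion-free. (This is the one place where $\mathbb{Q} \subseteq \ring{R}$ is used, together with the later division by $3$.) The strategy is then to correct $\widetilde\nabla$ by a $\lie{g}$-valued bilinear map $S$ that is \emph{symmetric} in its two arguments, so that $\nabla := \widetilde\nabla + S$ stays torsion-free by \eqref{eq:TorsionFree}, and to choose $S$ so that $\nabla\omega = 0$; the claim about $r$ will then follow formally since $r = \omega^{-1}$.

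Next I would introduce $N(X,Y,Z) := (\widetilde\nabla_X\omega)(Y,Z) = -\omega(\widetilde\nabla_X Y, Z) - \omega(Y,\widetilde\nabla_X Z)$, which is antisymmetric in $Y,Z$. The structural input is the cyclic identity $N(X,Y,Z) + N(Y,Z,X) + N(Z,X,Y) = (\delta_\CE\omega)(X,Y,Z)$, obtained by a direct computation: expanding the right-hand side and replacing each bracket $[X,Y]$ by $\widetilde\nabla_X Y - \widetilde\nabla_Y X$ (torsion-freeness) matches the cyclic sum of $N$ term by term. Since $\Schouten{r,r} = 0$ is equivalent to $\delta_\CE\omega = 0$ by non-degeneracy, as recalled before the statement, the cyclic sum of $N$ vanishes.

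Then I would set $A(X,Y,Z) := \tfrac13\big(N(X,Y,Z) + N(Y,X,Z)\big)$, which is manifestly symmetric in $X,Y$, and check, using antisymmetry of $N$ in its last two slots and the cyclic identity, that $A(X,Y,Z) - A(X,Z,Y) = N(X,Y,Z)$. Because $\omega$ is non-degenerate, $\flat\colon \lie{g} \to \lie{g}^*$ is an isomorphism, so there is a unique $S(X,Y) \in \lie{g}$ with $\omega(S(X,Y),Z) = A(X,Y,Z)$ for all $Z$, and symmetry of $A$ in $X,Y$ gives $S(X,Y) = S(Y,X)$. Putting $\nabla := \widetilde\nabla + S$, torsion-freeness is immediate, and one computes $(\nabla_X\omega)(Y,Z) = N(X,Y,Z) - \omega(S(X,Y),Z) - \omega(Y,S(X,Z)) = N(X,Y,Z) - A(X,Y,Z) + A(X,Z,Y) = 0$. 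Finally $\nabla_X r = 0$ follows from $\nabla_X\omega = 0$: $\nabla_X$ is a derivation compatible with the natural pairing, hence it commutes with $\flat$ and with its inverse $\sharp$, so $r = \sharp(\omega^{-1})$, built from $\omega$ by a $\nabla_X$-equivariant operation, is also parallel (equivalently, apply $\nabla_X$ to the identity $\sharp \circ \flat = \id$).

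The main obstacle I anticipate is purely the sign and permutation bookkeeping in the middle step: verifying precisely that, for a torsion-free $\widetilde\nabla$, the cyclic sum of $N$ reproduces $(\delta_\CE\omega)$, and then pinning down that the coefficient $\tfrac13$ in $A$ is exactly the one making $A(X,Y,Z) - A(X,Z,Y) = N(X,Y,Z)$. Once non-degeneracy of $\omega$ is invoked to turn the $\lie{g}^*$-valued correction $A$ into the $\lie{g}$-valued map $S$, the rest is routine.
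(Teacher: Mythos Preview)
Your proposal is correct and is essentially the paper's own argument: both start from the half-commutator connection $\widetilde\nabla_X Y = \tfrac{1}{2}[X,Y]$ and correct it via the Hess trick, and your correction $S$ defined through $A(X,Y,Z) = \tfrac{1}{3}\big(N(X,Y,Z)+N(Y,X,Z)\big)$ reproduces exactly the paper's formula $\omega(\nabla_X Y,Z) = \omega(\widetilde\nabla_X Y,Z) + \tfrac{1}{3}(\widetilde\nabla_X\omega)(Y,Z) + \tfrac{1}{3}(\widetilde\nabla_Y\omega)(X,Z)$. You simply spell out in more detail the verification that the paper leaves as an ``immediate computation using the closedness $\delta_\CE\omega=0$''.
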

\begin{proof}
    The idea is to start with the half-commutator connection as in the
    geometric case and make it symplectic by means of the Hess trick.
    The covariant derivative
    \begin{equation*}
        \tilde\nabla
        \colon
        \lie{g} \times \lie{g} \ni (X, Y)
        \; \mapsto \;
        \frac{1}{2}[X, Y] \in \lie{g}
    \end{equation*}
    is clearly torsion-free.  Since $\omega$ is non-degenerate, we can
    determine a map $\nabla_X$ uniquely by
    \begin{equation}
        \label{eq:HessConnection}
        \omega(\nabla_X Y,Z)
        =
        \omega(\tilde\nabla_X Y,Z)
        +
        \frac{1}{3}(\tilde{\nabla}_X\omega)(Y,Z)
        +
        \frac{1}{3}(\tilde{\nabla}_Y\omega)(X,Z).
    \end{equation}
    It is then an immediate computation using the closedness
    $\delta_\CE\omega = 0$ of $\omega$, that this map satisfies all
    requirements.
\end{proof}

The curvature $\tilde{R}$ corresponding to $\nabla$ is defined by
\begin{equation}
    \label{eq:CurvatureTilde}
    \tilde{R}\colon
    \lie{g} \times \lie{g} \times \lie{g} \ni (X, Y, Z)
    \; \mapsto \;
    \tilde{R}(X, Y)Z
    =
    \nabla_X \nabla_Y Z - \nabla_Y\nabla_X Z - \nabla_{[X,Y]}Z
    \in \lie{g}
\end{equation}
For a symplectic covariant derivative, we contract $\tilde{R}$ with the
symplectic form $\omega$ and get
\begin{equation}
    \label{eq:Curvature}
    R\colon
    \lie{g} \times \lie{g} \times \lie{g} \times \lie{g}
    \ni (Z, U, X, Y)
    \; \mapsto \;
    \omega(Z, \tilde{R}(X, Y)U)
    \in \ring{R},
\end{equation}
which is symmetric in the first two components and antisymmetric in
the last ones: this follows at once from $\nabla$ being torsion-free
and symplectic. In other words, $R\in \Sym^2(\lie{g}^*) \tensor
\Anti^2\lie{g}^*$ becomes an element of the formal Weyl algebra
satisfying
\begin{equation}
    \label{eq:DegreesCurvature}
    \degs R = 2 R = \Deg R,
    \quad
    \dega R = 2 R,
    \quad
    \textrm{and}
    \quad
    \deg_t R = 0.
\end{equation}

In the following, we will fix a symplectic torsion-free covariant
derivative, the existence of which is granted by
Proposition~\ref{prop:HessTrick}. Since $\nabla_X$ acts on all types
of tensors already, we can use $\nabla$ to define the following
derivation $D$ on the formal Weyl algebra
\begin{equation}
    \label{eq:ConnectionWeylAlgebra}
    D\colon
    \mathcal{W} \tensor \Anti^\bullet \ni (f \tensor \alpha)
    \; \mapsto \;
    \nabla_{e_i} f\tensor e^i \wedge \alpha
    +
    f \tensor e^i\wedge\nabla_{e_i} \alpha
    \in
    \mathcal{W} \tensor \Anti^{\bullet+1}.
\end{equation}
Notice that we do not use the explicit expression of $\nabla$ given in
\eqref{eq:HessConnection}. In fact, any other symplectic torsion-free
covariant derivative will do the job as well.

For every torsion-free covariant derivative $\nabla$ it is easy to
check that
\begin{equation}
    \label{eq:TorsionFreeCEDifferential}
    e^i \wedge \nabla_{e_i}\alpha = \delta_\CE \alpha
\end{equation}
holds for all $\alpha \in \Anti^\bullet \lie{g}^*$: indeed, both sides
define graded derivations of antisymmetric degree $+1$ and coincide on
generators in $\lie{g}^* \subseteq \Anti^\bullet\lie{g}^*$. Therefore,
we can rewrite $D$ as
\begin{equation}
  \label{eq:ExtendedDerivative}
  D (f \tensor \alpha)
  =
  \nabla_{e_i}f \tensor e^i \wedge \alpha
  +
  f \tensor \delta_\CE \alpha.
\end{equation}
From now on, unless clearly stated, we refer to $[\cdot, \cdot]$ as
the super-commutator with respect to the anti-symmetric degree.
\begin{proposition}
    \label{proposition:OperatorProperties}%
    Let $\nabla$ be a symplectic torsion-free covariant derivative.
    If in addition $s$ is covariantly constant, i.e. if
    $\nabla_X s = 0$ for all $X \in \lie{g}$, the map
    $D\colon \mathcal{W} \tensor \Anti^\bullet \longrightarrow
    \mathcal{W} \tensor \Anti^{\bullet+1}$
    is a graded derivation of antisymmetric degree $+1$ of the star
    product $\circs$, i.e.
    \begin{equation}
        \label{eq:CovariantDerivativeWeyl}
        D (a \circs b)
        =
        D(a) \circs b + (-1)^k a \circs D(b)
    \end{equation}
    for $a \in \mathcal{W} \tensor \Anti^k$ and $b \in
    \mathcal{W}\tensor \Anti^\bullet$. In addition, we have
    \begin{equation}
        \label{eq:BianchiId}
        \delta R = 0,
        \quad
        DR = 0,
        \quad
        [\delta, D]
        =
        \delta D + D \delta
        =
        0,
        \quad
        \textrm{and}
        \quad
        D^2
        =
        \tfrac{1}{2}[D, D]
        =
        \tfrac{1}{t}\ad(R).
    \end{equation}
\end{proposition}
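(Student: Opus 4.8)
The plan is to establish the statements in the order: (a) $D$ is a graded derivation of $\circs$; (b) $\delta D + D\delta = 0$; (c) $D^2 = \tfrac{1}{t}\ad(R)$; and then to deduce (d) $\delta R = 0$ and $DR = 0$ from (b), (c) and Lemma~\ref{lem:Center}. For (a), I recall from \eqref{eq:ExtendedDerivative} and the Leibniz rules for $\nabla_{e_i}$, $e^i \wedge (\argument)$ and $\delta_\CE$ that $D$ is already a graded derivation of the undeformed product $\mu$. Writing $\overline{D} = D \tensor \id + \id \tensor D$ with the Koszul sign on the second summand, so that $D \circ \mu = \mu \circ \overline{D}$, the identity \eqref{eq:CovariantDerivativeWeyl} becomes $\mu \circ (\overline{D}\, \E^{\frac{t}{2}\mathcal{P}} - \E^{\frac{t}{2}\mathcal{P}}\, \overline{D}) = 0$, and since $\mathcal{P}$ commutes with all operators occurring in $[\overline{D}, \mathcal{P}]$, an induction on the powers of $\mathcal{P}$ reduces this to $\mu \circ [\overline{D}, \mathcal{P}] = 0$. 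Now \eqref{eq:CommutatorNablaInss} gives $[D, \inss(e_i)] = \Gamma^{\ell}_{ki}\, (e^k \wedge (\argument))\, \inss(e_\ell)$, so that after applying $\mu$ the one-forms $e^k$ line up (the relevant sign being already built into $\overline{D}$) and the remaining coefficients collapse to $\sum_p (\pi^{pj}\Gamma^{\ell}_{kp} + \pi^{\ell p}\Gamma^{j}_{kp})$, which is precisely the component form of $\nabla_{e_k}\pi = 0$. This vanishes because $\nabla r = 0$ by the Hess trick (Proposition~\ref{prop:HessTrick}) and $\nabla s = 0$ by assumption; this is the single point where covariant constancy of $s$ enters.

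For (b) and (c) I work on the degree-one elements $\lie{g}^* \subseteq \Sym^1\lie{g}^*$ and $\lie{g}^* \subseteq \Anti^1\lie{g}^*$, which generate $\mathcal{W} \tensor \Anti^\bullet$ as a $\circs$-algebra, using that $\delta$, $D$, $D^2 = \tfrac12[D,D]$ and $\tfrac{1}{t}\ad(R)$ are all graded derivations of $\circs$ — the last being well defined since $\circs$ is commutative in zeroth order of $t$ — so that agreement on generators suffices. On $1 \tensor e^k$ everything is annihilated by $\inss(e_i)$, hence $\delta$ acts as zero while $D$ restricts to $1 \tensor \delta_\CE(\argument)$; therefore $(\delta D + D\delta)(1\tensor e^k) = 0$, $D^2(1\tensor e^k) = 1 \tensor \delta_\CE^2 e^k = 0$, and $\tfrac{1}{t}\ad(R)(1\tensor e^k) = 0$ by Lemma~\ref{lem:Center} because $\degs(1\tensor e^k) = 0$. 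On $e^k\tensor 1$ one computes $D(e^k\tensor 1) = -\Gamma^k_{ia}\, e^a \tensor e^i$; applying $D$ and $\delta$ once more, the Christoffel-type terms cancel against the structure-constant terms coming from $\delta_\CE$ by the torsion-free identity \eqref{eq:ChristoffelTorsion}, which gives (b), while the same second application of $D$ yields $D^2(e^k\tensor 1) = -\tfrac12\, \tilde{R}^k{}_{lmn}\, e^l \tensor e^m \wedge e^n$ with $\tilde{R}(e_m,e_n)e_l = \tilde{R}^p{}_{lmn}e_p$ as in \eqref{eq:CurvatureTilde}; on the other side the symmetric part of $\pi$ drops out of the commutator, leaving $\tfrac{1}{t}[R, e^k\tensor 1] = r^{ak}\inss(e_a)R$, and the two agree once one inserts $R(e_z, e_u, e_x, e_y) = \omega(e_z, \tilde{R}(e_x,e_y)e_u)$ from \eqref{eq:Curvature} and uses that $r$ and $\omega$ are mutually inverse. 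This proves (c).

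Finally, (d) follows formally. Composing $D^2 = \tfrac{1}{t}\ad(R)$ with $D$ on either side gives $D \circ \ad(R) = \ad(R) \circ D$; since $D$ is a graded derivation of $\circs$ this forces $[DR, b] = 0$ for all $b$, so by Lemma~\ref{lem:Center} we get $\degs(DR) = 0$, whereas $D$ preserves the symmetric degree so $\degs(DR) = 2\, DR$ — hence $DR = 0$. In the same way $\delta D + D\delta = 0$ gives $\delta \circ D^2 = D^2 \circ \delta$, i.e. $\delta \circ \ad(R) = \ad(R) \circ \delta$, hence $[\delta R, b] = 0$ for all $b$ and $\degs(\delta R) = 0$; but $\delta$ lowers the symmetric degree by one, so $\degs(\delta R) = \delta R$, forcing $\delta R = 0$. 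I expect the main obstacles to be the two genuinely computational points: tracking the Koszul signs in the reduction of step (a) and checking that the curvature-dependent contributions collapse exactly to the linear condition $\nabla \pi = 0$, and carrying out the explicit generator computation of step (c) matching $D^2$ with $\tfrac{1}{t}\ad(R)$.
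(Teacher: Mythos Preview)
Your argument is correct and, for parts (a) and (d), essentially identical to the paper's: the paper also reduces the derivation property of $D$ to the commutation of $\nabla_{e_k}\tensor\id + \id\tensor\nabla_{e_k}$ with $\mathcal{P}$, which collapses to $\nabla\pi=0$; and it obtains $\delta R=0$, $DR=0$ from the Jacobi identity for the graded commutator exactly as you do, using Lemma~\ref{lem:Center} together with the symmetric degrees of $\delta R$ and $DR$.

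Where you genuinely diverge is in (b) and (c). The paper verifies $[\delta,D]=0$ by a direct coordinate computation on an arbitrary factorizing element $f\tensor\alpha$, and then says only ``using a similar computation in coordinates'' for $D^2=\tfrac{1}{t}\ad(R)$. You instead invoke that $[\delta,D]$, $D^2$ and $\tfrac{1}{t}\ad(R)$ are graded $\circs$-derivations and check them on the symmetric and antisymmetric degree-one generators. This is a cleaner and more conceptual route, particularly for (c), where it replaces an implicit global curvature computation by a short check on $e^k\tensor 1$. The cost is that you rely on the fact that the $e^k\tensor 1$ and $1\tensor e^k$ generate $\mathcal{W}\tensor\Anti^\bullet$ \emph{topologically} as a $\circs$-algebra, and that the derivations in question are continuous for the total-degree filtration; both points are straightforward but worth stating explicitly, since $\mathcal{W}$ involves a direct product rather than a direct sum.
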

\begin{proof}
    For the operator $\mathcal{P}$ from
    \eqref{eq:WeylMoyalStarProduct} we have
    \begin{align*}
        &(\id \tensor \nabla_{e_k} + \nabla_{e_k}\tensor\id)
        \mathcal P(a\tensor b) \\
        &\quad=
        \pi^{ij}\inss(e_i)a\tensor\nabla_{e_k}\inss(e_j)b
        +
        \pi^{ij}\nabla_{e_k}\inss(e_i)a\tensor\inss(e_j)b
        \\
        &\quad\ot{(a)}{=}
        (\pi^{\ell j}\Gamma_{k\ell}^i
        +
        \pi^{i\ell}\Gamma^j_{k\ell})\inss(e_i)a\tensor\inss(e_j)b
        +
        \mathcal P(\id\tensor \nabla_{e_k}
        +
        \nabla_{e_k}\tensor\id)(a\tensor b)
        \\
        &\quad=
        \mathcal P(\id\tensor \nabla_{e_k}
        +
        \nabla_{e_k}\tensor\id)(a\tensor b)
    \end{align*}
    for $a, b \in \mathcal{W}\tensor \Anti^\bullet$.  Here we used the
    relation $[\nabla_X, \inss(Y)] = \inss(\nabla_X Y)$ as well as the
    definition of the Christoffel symbols in ($a$). In the last step
    we used $\pi^{\ell j}\Gamma_{k\ell}^i + \pi^{i\ell}\Gamma^j_{k\ell} =
    0$ which follows from $\nabla (r+s) = 0$.  Therefore we have
    \begin{align*}
        \nabla_{e_i}
        \circ
        \mu \circ \E^{\frac{t}{2}\mathcal P}
        =
        \mu
        \circ
        (\id\tensor \nabla_{e_i} + \nabla_{e_i}\tensor\id)
        \circ
        \E^{\frac{t}{2}\mathcal P}
        =
        \mu
        \circ
        \E^{\frac{t}{2}\mathcal P}
        \circ
        (\id \tensor \nabla_{e_i} + \nabla_{e_i} \tensor \id).
    \end{align*}
    By $\wedge$-multiplying by the corresponding $e^i$'s it follows
    that $D$ is a graded derivation of antisymmetric degree $+1$.  Let
    $f \tensor \alpha \in \mathcal{W} \tensor \Anti^\bullet$. Just
    using the definition of $\delta$, \eqref{eq:ExtendedDerivative}
    and the fact that $\nabla$ is torsion-free we get
    \begin{align*}
        \delta D(f\tensor\alpha)
        &=
        \delta(\nabla_{e_k}f\tensor e^k\wedge\alpha
        +
        f\tensor\delta_\CE \alpha)
        \\
        &
        =
        -D\delta(f \tensor \alpha)
        +
        \tfrac{1}{2}
        (\Gamma^\ell_{ik} - \Gamma^\ell_{ki} - C^\ell_{ik})
        \inss(e_\ell) f \tensor e^i \wedge e^k \wedge \alpha
        \\
        &=
        - D\delta(f \tensor \alpha).
    \end{align*}
    Using a similar computation in coordinates, we get $D^2 =
    \frac{1}{2}[D,D] = \frac{1}{t} \ad(R)$. Finally, from the Jacobi
    identity of the graded commutator we get $\frac{1}{2t} \ad(\delta
    R) = [\delta, [D, D]] = 0$. Hence $\delta R$ is central. Since
    $\delta R$ has symmetric degree $+1$, this can only happen if
    $\delta R = 0$. With the same argument, $0 = [D, [D, D]]$ yields
    that $DR$ is central, which again gives $DR = 0$ by counting
    degrees.
\end{proof}
\begin{remark}
    \label{remark:WeylOrOtherOrdering}%
    In principle, we will mainly be interested in the case $s = 0$ in
    the following. However, if the Lie algebra allows for a
    covariantly constant $s$ it might be interesting to incorporate
    this into the universal construction: already in the abelian case
    this leads to the freedom of choosing a different ordering than
    the Weyl ordering (total symmetrization). Here in particular the
    Wick ordering is of significance due to the better positivity
    properties, see \cite{bursztyn.waldmann:2000a} for a universal
    deformation formula in this context.
\end{remark}

The core of Fedosov's construction is now to turn $-\delta + D$ into a
differential: due to the curvature $R$ the derivation $- \delta + D$
is not a differential directly. Nevertheless, from the above
discussion we know that it is an inner derivation. Hence the idea is
to compensate the defect of being a differential by inner derivations,
leading to the following statement:
\begin{proposition}
    \label{proposition:FedosovDerivation}%
    Let $\Omega \in t\Anti^2 \lie g^*[[t]]$ be a series of
    $\delta_\CE$-closed two-forms. Then there is a unique
    $\varrho \in \mathcal{W}_2\tensor\Anti^1$, such that
    \begin{align}
        \label{eq:x-property2}
        \delta \varrho
        =
        R + D\varrho + \tfrac{1}{t} \varrho \circs \varrho
        +
        \Omega
    \end{align}
    and
    \begin{align}
        \label{eq:x-property1}
        \delta^{-1} \varrho
        =
        0.
    \end{align}
    Moreover, the derivation
    $\FedosovD = -\delta + D + \frac{1}{t}\ad(\varrho)$ satisfies
    $\FedosovD^2 = 0$.
\end{proposition}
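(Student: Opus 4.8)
The plan is to rewrite the pair of conditions \eqref{eq:x-property2}--\eqref{eq:x-property1} as a single fixed-point equation, solve it by recursion on the total degree $\Deg$, verify a posteriori that the resulting $\varrho$ really satisfies the differential equation \eqref{eq:x-property2}, and finally deduce $\FedosovD^2 = 0$ from a direct computation of the graded commutator $[\FedosovD,\FedosovD]$. First note that, since $\varrho$ has antisymmetric degree one, $\mu(\varrho,\varrho) = 0$ by graded commutativity of $\mu$, so that $\varrho\circs\varrho = \tfrac{1}{2}[\varrho,\varrho]$ is divisible by $t$ and $\tfrac{1}{t}\varrho\circs\varrho$ is well-defined; the same applies to $\tfrac{1}{t}[a,\varrho]$ for $a$ of definite antisymmetric degree. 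Applying $\delta^{-1}$ to \eqref{eq:x-property2} and using Lemma~\ref{lem:Poincare} together with \eqref{eq:x-property1} and the fact that $\sigma$ annihilates $\varrho$ (which has positive antisymmetric degree), every solution $\varrho$ must satisfy
\[
  \varrho = L(\varrho) := \delta^{-1}\bigl(R + \Omega + D\varrho + \tfrac{1}{t}\varrho\circs\varrho\bigr),
\]
while conversely any fixed point of $L$ fulfils \eqref{eq:x-property1} automatically because $(\delta^{-1})^2 = 0$. Hence it suffices to produce a unique fixed point of $L$ and then to check it solves \eqref{eq:x-property2}.

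The existence and uniqueness of the fixed point I would obtain by a contraction argument in the complete $\Deg$-filtration. The key bookkeeping is that $\delta^{-1}$ raises $\Deg$ by one, $D$ preserves $\Deg$, division by $t$ lowers $\Deg$ by two, and $\circs$ respects the filtration in the sense $\mathcal{W}_k\circs\mathcal{W}_\ell\subseteq\mathcal{W}_{k+\ell}$. Since $R\in\mathcal{W}_2\tensor\Anti^2$ by \eqref{eq:DegreesCurvature} and $\Omega\in t\Anti^2\lie{g}^*[[t]]\subseteq\mathcal{W}_2\tensor\Anti^2$, it follows that $L$ maps $\mathcal{W}_2\tensor\Anti^1$ into $\mathcal{W}_3\tensor\Anti^1$ and that $L(\varrho)\equiv L(\varrho')\bmod\mathcal{W}_{k+1}$ whenever $\varrho\equiv\varrho'\bmod\mathcal{W}_k$; by completeness $L$ then has a unique fixed point $\varrho\in\mathcal{W}_3\tensor\Anti^1$, and in particular any $\varrho$ satisfying both \eqref{eq:x-property2} and \eqref{eq:x-property1} is uniquely determined.

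The heart of the proof is to show that this fixed point actually satisfies the stronger equation \eqref{eq:x-property2}. Set $A = R + \Omega + D\varrho + \tfrac{1}{t}\varrho\circs\varrho$, so that $\varrho = \delta^{-1}A$, and put $B = \delta\varrho - A$. As $A$ and $B$ have antisymmetric degree two, Lemma~\ref{lem:Poincare} yields $\delta\varrho = \delta\delta^{-1}A = A - \delta^{-1}\delta A$, hence $B = -\delta^{-1}\delta A$; consequently $\delta^{-1}B = 0$ and, again by Lemma~\ref{lem:Poincare}, $B = \delta^{-1}\delta B$. I would then compute $\delta B = -\delta A$ using $\delta R = \delta\Omega = 0$, $[\delta,D] = 0$ and the graded Leibniz rule for $\delta$ on $\circs$, substitute $\delta\varrho = A + B$, and use $DR = D\Omega = 0$, $D^2 = \tfrac{1}{t}\ad(R)$ and the graded Leibniz rule for $D$ on $\circs$ (Proposition~\ref{proposition:OperatorProperties}), the centrality of $\Omega$ (Lemma~\ref{lem:Center}, $\Omega$ having symmetric degree zero) and the graded Jacobi identity in the form $[\varrho\circs\varrho,\varrho] = \tfrac{1}{2}[[\varrho,\varrho],\varrho] = 0$, to arrive at the auxiliary identity $DA = \tfrac{1}{t}[A,\varrho]$. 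These cancellations collapse everything to the homogeneous linear equation
\[
  \delta B = DB + \tfrac{1}{t}\ad(\varrho)(B).
\]
Now $D$ preserves $\Deg$, $\tfrac{1}{t}\ad(\varrho)$ raises $\Deg$ (because $\varrho\in\mathcal{W}_3$) and $\delta^{-1}$ raises $\Deg$ by one, so the relation $B = \delta^{-1}\delta B = \delta^{-1}\bigl(DB + \tfrac{1}{t}\ad(\varrho)(B)\bigr)$ shows $B\in\mathcal{W}_{k+1}$ whenever $B\in\mathcal{W}_k$; starting from $B\in\mathcal{W}_2$ and invoking completeness of the filtration gives $B = 0$, i.e. \eqref{eq:x-property2} holds. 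This degree bootstrap, and in particular setting up the identity $DA = \tfrac{1}{t}[A,\varrho]$, is the step I expect to demand the most care; the remaining manipulations with graded derivations are routine.

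For the last claim I would observe that $\FedosovD = -\delta + D + \tfrac{1}{t}\ad(\varrho)$ is a graded derivation of $\circs$ of antisymmetric degree $+1$ and expand $\FedosovD^2 = \tfrac{1}{2}[\FedosovD,\FedosovD]$ using $[\delta,\delta] = [\delta,D] = 0$, $[D,D] = \tfrac{2}{t}\ad(R)$, $[\delta,\ad(\varrho)] = \ad(\delta\varrho)$, $[D,\ad(\varrho)] = \ad(D\varrho)$ and $[\ad(\varrho),\ad(\varrho)] = 2\ad(\varrho\circs\varrho)$. This gives
\[
  \FedosovD^2 = \tfrac{1}{t}\ad\bigl(-\delta\varrho + R + D\varrho + \tfrac{1}{t}\varrho\circs\varrho\bigr) = \tfrac{1}{t}\ad(-\Omega) = 0,
\]
the second equality being \eqref{eq:x-property2} and the last one holding because $\Omega$ has symmetric degree zero and is hence central by Lemma~\ref{lem:Center}.
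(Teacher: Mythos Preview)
Your proof is correct and follows essentially the same approach as the paper: both recast the pair of conditions as a fixed-point equation for the map $a\mapsto\delta^{-1}(R+\Omega+Da+\tfrac{1}{t}a\circs a)$, solve it by a contraction argument in the $\Deg$-filtration, and then verify \eqref{eq:x-property2} by showing that the ``defect'' (your $B$, the paper's $A$) satisfies $\delta B = DB + \tfrac{1}{t}\ad(\varrho)(B)$ together with $\delta^{-1}B=0$, forcing $B=0$ by the same filtration bootstrap. Your auxiliary identity $DA = \tfrac{1}{t}[A,\varrho]$ is exactly the Bianchi-type identity the paper invokes (written there as $(-\delta+D+\tfrac{1}{t}\ad(\varrho))$ applied to $R+D\varrho+\tfrac{1}{t}\varrho\circs\varrho+\Omega$ vanishing), and you are slightly more explicit than the paper in spelling out the final $\FedosovD^2=0$ computation.
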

\begin{proof}
    Let us first assume that \eqref{eq:x-property2} is satisfied and
    apply $\delta^{-1}$ to \eqref{eq:x-property1}.  This yields
    \begin{align*}
        \delta^{-1}\delta \varrho
        =
        \delta^{-1}\left(
            R + Dx + \tfrac{1}{t} \varrho \circs \varrho + \Omega
        \right).
    \end{align*}
    From the Poincaré Lemma as in \eqref{eq:Poincare} we have
    \begin{align}
        \label{eq:x-necessaryprop}
        \varrho
        =
        \delta^{-1}
        \left(
            R
            + D\varrho + \tfrac{1}{t} \varrho \circs \varrho
            + \Omega
        \right).
    \end{align}
    Let us define the operator $B \colon \mathcal{W} \tensor \Anti^1
    \longrightarrow \mathcal{W} \tensor \Anti^1$ by
    \begin{align*}
        B(a)
        =
        \delta^{-1}(R + Da + \tfrac{1}{t}a\circs a + \Omega).
    \end{align*}
    Thus the solutions of \eqref{eq:x-property1} coincide with the
    fixed points of the operator $B$.  Now we want to show that $B$
    has indeed a unique fixed point.  By a careful but straightforward
    counting of degrees we see that $B$ maps $\mathcal{W}_2 \tensor
    \Anti^1$ into $\mathcal{W}_2 \tensor \Anti^1$.
    Second, we note that $B$ is a contraction with respect to the
    total degree. Indeed, for $a, a' \in \mathcal{W}_2 \tensor\Anti^1$
    with $a - a' \in \mathcal{W}_k \tensor \Anti^1$ we have
    \begin{align*}
        B(a)- B(a')
        &=
        \delta^{-1} D(a - a')
        +
        \tfrac{1}{t}
        \left(a \circs a - a' \circs a'\right) \\
        &=
        \delta^{-1} D(a - a')
        +
        \tfrac{1}{t}\delta^{-1}\left(
            (a - a') \circs a' + a \circs (a - a')
        \right).
    \end{align*}
    The first term $\delta^{-1}D(a - a')$ is an element of
    $\mathcal{W}_{k+1}\tensor\Anti^1$, because $D$ does not change the
    total degree and $\delta^{-1}$ increases it by $+1$.  Since $\Deg$
    is a $\circs$-derivation and since $a, a'$ have total degree at
    least $2$ and their difference has total degree at least $k$, the
    second term has total degree at least $k+1$, as $\frac{1}{t}$ has
    total degree $-2$ but $\delta^{-1}$ raises the total degree by
    $+1$. This allows to apply the Banach fixed-point theorem for the
    complete filtration by the total degree: we have a unique
    fixed-point $B(\varrho) = \varrho$ with
    $\varrho \in \mathcal{W}_2 \tensor \Anti^1$, i.e. $\varrho$
    satisfies \eqref{eq:x-necessaryprop}.  Finally, we show that this
    $\varrho$ fulfills \eqref{eq:x-property1}. Define
    \begin{align*}
        A
        =
        \delta \varrho
        - R
        - D\varrho
        - \tfrac{1}{t} \varrho \circs \varrho
        - \Omega.
    \end{align*}
    Apply $\delta$ to $A$ and using
    Prop.~\ref{proposition:OperatorProperties} we obtain
    \begin{align*}
        \delta A
        &=
        - \delta D \varrho
        - \tfrac{1}{t}
        \left(
            \delta \varrho \circs \varrho
            -
            \varrho \circs \delta \varrho
        \right) \\
        &=
        D\delta \varrho + \tfrac{1}{t} \ad(\varrho)\delta \varrho
        \\
        &=
        D\left(
            A
            + R
            + D\varrho
            + \tfrac{1}{t} \varrho \circs \varrho
            + \Omega
        \right)
        +
        \tfrac{1}{t}\ad(\varrho)
        \left(
            A
            + R
            + D\varrho
            + \tfrac{1}{t} \varrho \circs \varrho
            + \Omega
        \right)
        \\
        &\ot{(a)}{=}
        DA + \tfrac{1}{t}\ad(\varrho)(A).
    \end{align*}
    In ($a$) we used the fact that
    $(-\delta + D + \frac{1}{t}\ad (\varrho)) (R + D \varrho +
    \frac{1}{t} \varrho \circs \varrho + \Omega) = 0$,
    which can be seen as a version of the second Bianchi identity for
    $- \delta + D + \frac{1}{t}\ad(\varrho)$. This follows by an
    explicit computation for arbitrary $\varrho$. On the other hand
    \begin{align*}
        \delta^{-1}A
        =
        \delta^{-1} \left(
            \delta \varrho
            - R
            - D\varrho
            - \tfrac{1}{t} \varrho \circs \varrho
            -\Omega
        \right)
        =
        \delta^{-1}\delta \varrho - \varrho
        =
        \delta\delta^{-1} \varrho
        =
        0
    \end{align*}
    for $\varrho$ being the fixed-point of the operator $B$. In other
    words,
    \begin{align*}
        A
        =
        \delta^{-1}\delta A
        =
        \delta^{-1}\left(DA + \tfrac{1}{t}\ad(\varrho)(A)\right)
    \end{align*}
    is a fixed-point of the operator
    $K \colon \mathcal{W} \tensor \Anti^\bullet \longrightarrow
    \mathcal{W} \tensor \Anti^\bullet$ defined by
    \begin{align*}
        K a
        =
        \delta^{-1}\left(Da + \tfrac{1}{t}\ad(\varrho)(a)\right).
    \end{align*}
    Using an analogous argument as above, this operator is a
    contraction with respect to the total degree, and has a unique
    fixed-point. Finally, since $K$ is linear the fixed point has to
    be zero, which means that $A = 0$.
\end{proof}
\begin{remark}
    \label{remark:RecursiveConstruction}%
    It is important to note that the above construction of the element
    $\varrho$, which will be the crucial ingredient in the universal
    deformation formula below, is a fairly explicit recursion formula.
    Writing $\varrho = \sum_{r=3}^\infty \varrho^{(r)}$ with
    components $\varrho^{(r)}$ of homogeneous total degree
    $\Deg \varrho^{(r)} = r \varrho^{(r)}$ we see that
    $\varrho^{(3)} = \delta^{-1} (R + t \Omega_1)$ and
    \begin{equation}
        \label{eq:VarrhoIteratively}
        \varrho^{(r+3)}
        =
        \delta^{-1}\left(
            D \varrho^{(r+2)}
            +
            \tfrac{1}{t}
            \sum_{\ell = 1}^{r-1}
            \varrho^{(\ell+2)} \circs \varrho^{(r+2-\ell)}
            +
            \Omega^{(r+2)}
        \right),
    \end{equation}
    where $\Omega^{(2k)} = t^k \Omega_k$ for $k \in \mathbb{N}$ and
    $\Omega^{(2k+1)} = 0$.  Moreover, if we find a \emph{flat}
    $\nabla$, i.e. if $R = 0$, then for trivial $\Omega = 0$ we have
    $\varrho = 0$ as solution.
\end{remark}

%
%

\section{Universal Deformation Formula}
\label{sec:UniversalDeformationFormula}

Let us consider a triangular Lie algebra $(\lie g, r)$ acting on a
generic associative algebra $(\algebra A, \mu_{\algebra A})$ via
derivations. We denote by $\acts$ the corresponding Hopf algebra
action $\uea{\lie{g}} \longrightarrow \End(\algebra A)$.  In the
following we refer to
\begin{align*}
  \algebra{A} \tensor \mathcal{W} \tensor \Anti^\bullet
  =
  \prod_{k=0}^\infty
  \left(
      \algebra{A}
      \tensor \Sym^k \lie{g}^*
      \tensor \Anti^\bullet \lie{g}^*
  \right)[[t]]
\end{align*}
as the \emph{enlarged Fedosov algebra}.  The operators defined in the
previous section are extended to
$\algebra{A} \tensor \mathcal{W} \tensor \Anti^\bullet$ by acting
trivially on the $\algebra{A}$-factor and as before on the
$\mathcal{W} \tensor \Anti^\bullet$-factor.

The deformed product $\circs$ on $\mathcal{W} \tensor \Anti^\bullet$
together with the product $\mu_{\algebra{A}}$ of $\algebra{A}$ yields
a new (deformed) $\ring{R}[[t]]$-bilinear product $\msA$ for the
extended Fedosov algebra. Explicitly, on factorizing tensors we have
\begin{equation}
    \label{eq:ExtendedProduct}
    \msA\left(
        \xi_1 \tensor f_1 \tensor \alpha_1,
        \xi_2 \tensor f_2 \tensor\alpha_2
    \right)
    =
    (\xi_1 \cdot \xi_2)
    \tensor
    (f_1\tensor\alpha_1) \circs (f_2\tensor\alpha_2),
\end{equation}
where $\xi_1, \xi_2 \in \algebra{A}$,
$f_1, f_2 \in \Sym^\bullet\lie{g}^*$ and
$\alpha_1, \alpha_2 \in \Anti^\bullet\lie{g}^*$. We simply write
$\xi_1 \cdot \xi_2$ for the (undeformed) product $\mu_{\algebra{A}}$
of $\algebra{A}$. Clearly, this new product $\msA$ is again associative.

As new ingredient we use the action $\acts$ to define the operator
$L_{\algebra{A}}\colon \algebra{A} \tensor \mathcal{W} \tensor
\Anti^\bullet \longrightarrow \algebra{A} \tensor \mathcal{W} \tensor
\Anti^\bullet$ by
\begin{align}
    \label{eq:LeftMult}
    L_{\algebra{A}}(\xi \tensor f \tensor \alpha)
    =
    e_i \acts \xi \tensor f \tensor e^i\wedge \alpha
\end{align}
on factorizing elements and extend it $\ring{R}[[t]]$-linearly as
usual.  Since the action of Lie algebra elements is by derivations, we
see that $L_{\algebra{A}}$ is a derivation of
$\algebra{A} \tensor \mathcal{W} \tensor \Anti^\bullet$ of
antisymmetric degree $+1$. The sum
\begin{equation}
    \label{eq:ExtendedFedosovDerivation}
    \FedosovA
    =
    L_{\algebra A} + \FedosovD
\end{equation}
is thus still a derivation of antisymmetric degree $+1$ which we call
the \emph{extended Fedosov derivation}. It turns out to be a
differential, too:
\begin{lemma}
    The map
    $\FedosovA = L_{\algebra A} + \FedosovD$
    squares to zero.
\end{lemma}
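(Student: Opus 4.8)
The plan is to show $\FedosovA^2 = 0$ by expanding the square and using the already-established facts that $\FedosovD^2 = 0$ (Proposition~\ref{proposition:FedosovDerivation}) and that $L_{\algebra A}$ is a derivation of antisymmetric degree $+1$. Since both $L_{\algebra A}$ and $\FedosovD$ are odd derivations, we have
\begin{align*}
    \FedosovA^2
    =
    (L_{\algebra A} + \FedosovD)^2
    =
    L_{\algebra A}^2 + [L_{\algebra A}, \FedosovD] + \FedosovD^2
    =
    L_{\algebra A}^2 + [L_{\algebra A}, \FedosovD],
\end{align*}
where $[\cdot,\cdot]$ is the super-commutator with respect to the antisymmetric degree. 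So it remains to check the two pieces $L_{\algebra A}^2 = 0$ and $[L_{\algebra A}, \FedosovD] = 0$.

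For the first piece, I would compute $L_{\algebra A}^2$ directly on a factorizing element $\xi \tensor f \tensor \alpha$. Applying $L_{\algebra A}$ twice produces $e_j \acts (e_i \acts \xi) \tensor f \tensor e^j \wedge e^i \wedge \alpha = (e_j e_i) \acts \xi \tensor f \tensor e^j \wedge e^i \wedge \alpha$, and since the one-forms anticommute, $e^j \wedge e^i$ picks out the antisymmetric part of $e_j e_i$ in the universal enveloping algebra, namely $\tfrac{1}{2}[e_j, e_i] \acts \xi \tensor f \tensor e^j \wedge e^i \wedge \alpha$. Rewriting $[e_j,e_i] = C_{ji}^k e_k$ and using that $C_{ji}^k e^j \wedge e^i = -\delta_\CE e^k$ (or, equivalently, that the structure constants are antisymmetric while the wedge is antisymmetric, and the $\delta_\CE$-closedness of the problem is not even needed here — only the Jacobi identity is implicit), one sees the contribution reassembles into a term that vanishes. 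More precisely $L_{\algebra A}^2 = \tfrac12 e^i \wedge e^j \wedge (\,\cdot\,)$ applied with the bracket, and since $L_{\algebra A}$ is a derivation, $L_{\algebra A}^2 = \tfrac12 [L_{\algebra A}, L_{\algebra A}]$ is again a derivation of antisymmetric degree $+2$; evaluating on the generators $\xi \in \algebra A$ gives the structure-constant term which telescopes against the Jacobi identity of $\lie g$ together with the fact that $\acts$ is a Lie algebra action, and on generators in $\Sym^\bullet \lie g^*$ and $\Anti^\bullet \lie g^*$ it vanishes since $L_{\algebra A}$ acts trivially there. Hence $L_{\algebra A}^2 = 0$.

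For the cross term $[L_{\algebra A}, \FedosovD]$, note that $L_{\algebra A}$ commutes with $\frac{1}{t}\ad(\varrho)$ and with $\delta$ and with the $\circs$-multiplications, because $L_{\algebra A}$ only touches the $\algebra A$-factor via the action while $\varrho, \delta$ live purely in $\mathcal W \tensor \Anti^\bullet$; the only potentially nontrivial super-commutator is $[L_{\algebra A}, D]$. Here I would use the explicit form $D(\xi \tensor f \tensor \alpha) = \xi \tensor \nabla_{e_i} f \tensor e^i \wedge \alpha + \xi \tensor f \tensor \delta_\CE \alpha$ and $L_{\algebra A}(\xi \tensor f \tensor \alpha) = e_i \acts \xi \tensor f \tensor e^i \wedge \alpha$, and compute both orders of composition on a factorizing element. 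The terms where $D$ hits the $f$- or $\alpha$-slot and $L_{\algebra A}$ hits the $\xi$-slot commute up to the sign coming from moving $e^i \wedge$ past $e^j \wedge$; the remaining genuinely noncommuting contribution involves $\nabla_{e_j}$ acting — through the Christoffel symbols — on the basis vectors implicit in $L_{\algebra A}$, i.e. a term $\Gamma_{ji}^k (e_k \acts \xi) \tensor f \tensor e^j \wedge e^i \wedge \alpha$, and its transpose. Because $e^j \wedge e^i$ is antisymmetric, only the antisymmetric part $\Gamma_{ji}^k - \Gamma_{ij}^k = -C_{ij}^k$ survives (using torsion-freeness, \eqref{eq:ChristoffelTorsion}), and this combines with the $C_{ij}^k (e_k \acts \xi) \tensor f \tensor e^i \wedge e^j \wedge \alpha$ term produced by $[e_i, e_j] \acts \xi$ in $L_{\algebra A}^2$-type bookkeeping so as to cancel; after the dust settles $[L_{\algebra A}, D] = 0$. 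I expect this last coordinate computation with the Christoffel symbols and structure constants to be the main obstacle — it is entirely routine but one must keep track of signs from the graded commutator and of the antisymmetrization forced by the wedge product. Combining $L_{\algebra A}^2 = 0$, $[L_{\algebra A}, \FedosovD] = 0$ and $\FedosovD^2 = 0$ yields $\FedosovA^2 = 0$.
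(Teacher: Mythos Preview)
Your decomposition $\FedosovA^2 = L_{\algebra A}^2 + [L_{\algebra A}, \FedosovD]$ is correct, but the two individual vanishing claims that follow are both false, and this is where the argument breaks down. The operator $L_{\algebra A}$ does \emph{not} square to zero: a direct computation (which you in fact start correctly) gives
\[
    L_{\algebra A}^2(\xi \tensor f \tensor \alpha)
    =
    \tfrac{1}{2} C_{ij}^k\, (e_k \acts \xi) \tensor f \tensor e^i \wedge e^j \wedge \alpha,
\]
and there is nothing here for the Jacobi identity to cancel against --- Jacobi governs a triple bracket, not this single structure-constant contraction. What \emph{would} square to zero is the full Chevalley--Eilenberg differential $L_{\algebra A} + \id \tensor \delta_\CE$, but $L_{\algebra A}$ alone is only a piece of it. Likewise $[L_{\algebra A}, D]$ is not zero: your Christoffel-symbol computation, carried through, leaves precisely the term $-\Gamma^k_{ij}\,(e_k \acts \xi) \tensor f \tensor e^i \wedge e^j \wedge \alpha$, whose antisymmetric part in $i,j$ gives $-\tfrac{1}{2} C^k_{ij}\,(e_k \acts \xi) \tensor f \tensor e^i \wedge e^j \wedge \alpha$ by torsion-freeness \eqref{eq:ChristoffelTorsion}. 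That is $-L_{\algebra A}^2$, not zero.

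So the mechanism is exactly the cancellation you allude to in passing (``combines with the $C_{ij}^k$ term \ldots\ so as to cancel''), but it takes place between $L_{\algebra A}^2$ and $[L_{\algebra A}, D]$, not inside either one separately. The paper's proof makes this explicit: one computes $L_{\algebra A}^2$ as above, observes $[\delta, L_{\algebra A}] = [\ad(\varrho), L_{\algebra A}] = 0$ since these act on disjoint tensor factors, and then shows $[D, L_{\algebra A}] = -L_{\algebra A}^2$ using $\delta_\CE e^k = -\tfrac{1}{2} C^k_{ij} e^i \wedge e^j$. Your proposal contains all the right ingredients but attributes the cancellation to the wrong place; once you drop the two false standalone claims and instead verify $L_{\algebra A}^2 + [L_{\algebra A}, D] = 0$ directly, the proof is complete.
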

\begin{proof}
    First, we observe that
    $\FedosovA^2 = L_{\algebra A}^2 + [\FedosovD,
    L_{\algebra A}]$,
    because $\FedosovD^2 = 0$. Next, since $\acts$ is a Lie
    algebra action, we immediately obtain
    \[
    L_{\algebra A}^2(\xi\tensor f\tensor \alpha)
    =
    \frac{1}{2}C_{ij}^k e_k \acts \xi
    \tensor f
    \tensor e^i\wedge e^j \wedge \alpha
    \]
    on factorizing elements. We clearly have
    $[\delta, L_{\algebra{A}}] = 0 = [\ad(\varrho), L_{\algebra{A}}]$
    since the maps act on different tensor factors. It remains to
    compute the only nontrivial term in
    $[\FedosovD, L_{\algebra A}] = [D, L_{\algebra{A}}]$.  Using
    $\delta_\CE e^k = - \frac{1}{2} C^k_{ij} e^i \wedge e^j$, this
    results immediately in
    $[D, L_{\algebra{A}}] = - L_{\algebra{A}}^2$.
\end{proof}

The cohomology of this differential turns out to be almost trivial: we
only have a nontrivial contribution in antisymmetric degree $0$, the
kernel of $\FedosovA$. In higher antisymmetric
degrees, the following homotopy formula shows that the cohomology is
trivial:
\begin{proposition}
    \label{proposition:CohomologyExtendedFedosov}%
    The operator
    \begin{equation}
        \label{eq:DAHomotopy}
        \FedosovA^{-1}
        =
        \delta^{-1}
        \frac{1}
        {
          \id
          -
          \left[
              \delta^{-1},
              D + L_{\algebra{A}} + \frac{1}{t}\ad(\varrho)
          \right]
        }
    \end{equation}
    is a well-defined $\ring{R}[[t]]$-linear endomorphism of
    $\algebra A\tensor \mathcal{W}\tensor\Anti^\bullet$ and we have
    \begin{equation}
        a
        =
        \FedosovA \FedosovA^{-1} a
        +
        \FedosovA^{-1}\FedosovA a
        +
        \frac{1}
        {
          \id
          -
          \left[
              \delta^{-1},
              D + L_{\algebra A} + \frac{1}{t}\ad(\varrho)
          \right]
        }
        \sigma(a).
    \end{equation}
    for all $a \in \algebra{A} \tensor \mathcal{W} \tensor \Anti^\bullet$.
\end{proposition}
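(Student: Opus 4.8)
The plan is to mimic the classical Fedosov homotopy argument (as in \cite{fedosov:1996a, waldmann:2007a}), transplanted to the enlarged algebra $\algebra{A}\tensor\mathcal{W}\tensor\Anti^\bullet$, and to check that the extra term $L_{\algebra A}$ does not spoil the degree count. The decisive observation is that $\FedosovA = -\delta + (D + L_{\algebra A} + \tfrac1t\ad(\varrho))$, where $-\delta$ lowers the total degree by exactly $-1$ while $D + L_{\algebra A} + \tfrac1t\ad(\varrho)$ raises it by at least $+1$ (for $D$ and $L_{\algebra A}$ the total degree is unchanged and the antisymmetric degree goes up, and $\varrho\in\mathcal{W}_2\tensor\Anti^1$ gives $\tfrac1t\ad(\varrho)$ total degree $\geq +1$; here I use Proposition~\ref{proposition:FedosovDerivation} and the degree bookkeeping from its proof). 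Consequently, writing $\mathscr{A} := D + L_{\algebra A} + \tfrac1t\ad(\varrho)$, the commutator $[\delta^{-1},\mathscr{A}]$ raises the total degree by at least $+1$, so it is a contraction for the complete filtration by $\Deg$, and the geometric series $\big(\id - [\delta^{-1},\mathscr{A}]\big)^{-1} = \sum_{k\ge 0}[\delta^{-1},\mathscr{A}]^k$ converges; this makes $\FedosovA^{-1}$ in \eqref{eq:DAHomotopy} a well-defined $\ring{R}[[t]]$-linear endomorphism.

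First I would establish the pointwise identity. Start from the Poincaré-type Lemma~\ref{lem:Poincare}, extended trivially to the $\algebra{A}$-factor: $\delta\delta^{-1} + \delta^{-1}\delta + \sigma = \id$. Then substitute $\delta = \mathscr{A} - \FedosovA$ into both occurrences of $\delta$ appearing via $\delta^{-1}\delta + \delta\delta^{-1}$; rearranging gives
\begin{equation*}
  \id - \sigma
  = \FedosovA\delta^{-1} + \delta^{-1}\FedosovA
  + \big([\delta^{-1},\mathscr{A}] \text{ terms}\big),
\end{equation*}
and a short manipulation — exactly the one-line computation in Fedosov's proof of the contractibility of his complex, using that $\delta^{-1}$ has antisymmetric degree $-1$ and $\FedosovA$ degree $+1$, together with $(\delta^{-1})^2 = 0$ — rewrites this as
\begin{equation*}
  \id = \FedosovA\,\delta^{-1} + \delta^{-1}\,\FedosovA
  + \big[\delta^{-1}, \mathscr{A}\big] + \sigma.
\end{equation*}
Now apply the operator $\big(\id - [\delta^{-1},\mathscr{A}]\big)^{-1}$ on the right, use that it commutes with $\FedosovA$ up to reorganising the series (since $\FedosovA = -\delta + \mathscr{A}$ and both $\delta^{-1}$ and $\mathscr{A}$ feed into it), and collect $\delta^{-1}\big(\id-[\delta^{-1},\mathscr{A}]\big)^{-1}$ into $\FedosovA^{-1}$. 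This yields precisely the claimed three-term decomposition $a = \FedosovA\FedosovA^{-1}a + \FedosovA^{-1}\FedosovA a + \big(\id - [\delta^{-1},\mathscr{A}]\big)^{-1}\sigma(a)$.

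The one genuinely delicate point is the commutation step: $\big(\id - [\delta^{-1},\mathscr{A}]\big)^{-1}$ does \emph{not} commute with $\delta^{-1}$ or with $\mathscr{A}$ separately, so the passage from the additive identity to the form with $\FedosovA^{-1} = \delta^{-1}\big(\id-[\delta^{-1},\mathscr{A}]\big)^{-1}$ sitting on the appropriate side requires care. The trick, standard in the Fedosov literature, is to multiply the additive identity by the resolvent on the right and then to use the algebraic identity $\delta^{-1}\FedosovA + \FedosovA\delta^{-1} = \id - \sigma - [\delta^{-1},\mathscr{A}]$ to telescope; equivalently one checks directly, order by order in the total degree (using completeness of the filtration), that the two sides of the asserted formula agree. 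I expect the degree/convergence bookkeeping — verifying that every operator in sight strictly raises the total degree so all the geometric series make sense on $\algebra{A}\tensor\mathcal{W}\tensor\Anti^\bullet$ — to be the real content; the rest is the formal shuffle that is by now routine in Fedosov-type constructions, and $L_{\algebra A}$ enters completely on a par with $D$ since both are derivations of antisymmetric degree $+1$ preserving the total degree.
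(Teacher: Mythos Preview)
Your approach is essentially identical to the paper's: both derive the key operator identity from the Poincar\'e Lemma~\ref{lem:Poincare} by writing $\FedosovA = -\delta + \mathscr{A}$, verify that $[\delta^{-1},\mathscr{A}]$ raises the total degree so the geometric series converges, and then invoke the standard homological perturbation computation (the paper simply cites \cite[Prop.~6.4.17]{waldmann:2007a} for this last step). One correction: your displayed identity has a sign error --- substituting $\delta = \mathscr{A} - \FedosovA$ into $\delta\delta^{-1}+\delta^{-1}\delta = \id - \sigma$ actually gives
\[
  -\FedosovA\delta^{-1} - \delta^{-1}\FedosovA + \sigma
  \;=\;
  \id - [\delta^{-1},\mathscr{A}],
\]
which is precisely the paper's equation \eqref{eq:GeometricSeries}; with the correct signs the telescoping you describe goes through without further subtlety.
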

\begin{proof}
    Let us denote by $A$ the operator
    $[\delta^{-1}, D + L_{\algebra A} +
    \frac{1}{t}\ad(\varrho)]$.
    Since it increases the total degree by $+1$, the geometric series
    $(\id - A)^{-1}$ is well-defined as a formal series in the total
    degree. We start with the Poincaré lemma \ref{eq:Poincare} and get
    \begin{equation}
        \label{eq:GeometricSeries}
        - \FedosovA \delta^{-1}a
        - \delta^{-1} \FedosovA a
        + \sigma(a)
        =
        (\id - A) a,
    \end{equation}
    since $\mathcal{D}_{\mathcal{A}}$ deforms the differential
    $-\delta$ by higher order terms in the total degree. The usual
    homological perturbation argument then gives \eqref{eq:DAHomotopy}
    by a standard computation, see
    e.g. \cite[Prop.~6.4.17]{waldmann:2007a} for this computation.
\end{proof}
\begin{corollary}
    \label{cor:TrivialCohomology}%
    Let $a \in \algebra A \tensor \mathcal{W} \tensor \Anti^0$.  Then
    $\FedosovA a = 0$ if and only if
    \begin{equation}
        \label{eq:aInKernelDA}
        a
        =
        \frac{1}
        {
          \id
          -
          \left[
              \delta^{-1},
              D + L_{\algebra A} + \frac{1}{t}\ad(\varrho)
          \right]
        }
        \sigma(a).
    \end{equation}
\end{corollary}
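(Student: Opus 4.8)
The plan is to deduce the corollary directly from the homotopy identity of Proposition~\ref{proposition:CohomologyExtendedFedosov}, the only genuine work being to track the antisymmetric degree. Abbreviate $A = [\delta^{-1}, D + L_{\algebra{A}} + \tfrac{1}{t}\ad(\varrho)]$, so that $\FedosovA^{-1} = \delta^{-1}(\id - A)^{-1}$ and the identity of Proposition~\ref{proposition:CohomologyExtendedFedosov} reads $a = \FedosovA\FedosovA^{-1}a + \FedosovA^{-1}\FedosovA a + (\id - A)^{-1}\sigma(a)$ for every $a \in \algebra{A}\tensor\mathcal{W}\tensor\Anti^\bullet$.

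First I would record two elementary observations. Each of $D$, $L_{\algebra{A}}$ and $\tfrac{1}{t}\ad(\varrho)$ raises the antisymmetric degree by one while $\delta^{-1}$ lowers it by one, so $A$ preserves the antisymmetric degree and hence so does the geometric series $(\id - A)^{-1}$. Moreover $\delta^{-1}$ annihilates everything of antisymmetric degree zero, since $\delta^{-1}$ is a scalar multiple of $\delta^* = e^i \vee \insa(e_i)$ and $\insa(e_i)$ kills $\Anti^0$. Combining these: for $a \in \algebra{A}\tensor\mathcal{W}\tensor\Anti^0$ the element $(\id - A)^{-1}a$ still has antisymmetric degree zero, whence $\FedosovA^{-1} a = \delta^{-1}(\id - A)^{-1}a = 0$.

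With these in hand the two implications are immediate. If $\FedosovA a = 0$, then in the homotopy identity the second summand vanishes and, by the previous paragraph, so does the first, leaving precisely $a = (\id - A)^{-1}\sigma(a)$, i.e. \eqref{eq:aInKernelDA}. Conversely, assume \eqref{eq:aInKernelDA}; substituting into the homotopy identity and cancelling the term $(\id - A)^{-1}\sigma(a) = a$ gives $\FedosovA\FedosovA^{-1}a + \FedosovA^{-1}\FedosovA a = 0$, and since $\FedosovA^{-1}a = 0$ this forces $\FedosovA^{-1}\FedosovA a = 0$. To conclude $\FedosovA a = 0$ I would apply the homotopy identity to $b := \FedosovA a$, which has antisymmetric degree $1$: here $\FedosovA b = \FedosovA^2 a = 0$ and $\FedosovA^{-1}b = \FedosovA^{-1}\FedosovA a = 0$, so $b = (\id - A)^{-1}\sigma(b)$; but $\sigma$ projects onto antisymmetric degree zero, so $\sigma(b) = 0$ and therefore $b = \FedosovA a = 0$.

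The argument is routine; the one point demanding care --- the \emph{main obstacle}, such as it is --- is the degree bookkeeping, namely checking that $A$ (hence $(\id-A)^{-1}$) really preserves the antisymmetric degree and that $\FedosovA^{-1}$ annihilates antisymmetric degree zero, since both implications hinge on the vanishing $\FedosovA^{-1}a = 0$. Everything else is a formal rearrangement of the identity from Proposition~\ref{proposition:CohomologyExtendedFedosov}.
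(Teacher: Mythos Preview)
Your proof is correct and follows exactly the approach the paper has in mind: the corollary is stated in the paper without proof, as an immediate consequence of the homotopy identity in Proposition~\ref{proposition:CohomologyExtendedFedosov}, and your argument spells out precisely this deduction. The degree bookkeeping you isolate (that $A$ preserves the antisymmetric degree and that $\delta^{-1}$ kills $\Anti^0$, whence $\FedosovA^{-1}a = 0$) is indeed the only content, and you handle both implications cleanly, including the slightly less trivial converse via a second application of the homotopy identity to $\FedosovA a$.
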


Since the element
$a \in \algebra{A} \tensor \mathcal{W} \tensor \Anti^0$ is completely
determined in the symmetric and antisymmetric degree $0$, we can use
it to define the extended Fedosov Taylor series.
\begin{definition}[Extended Fedosov Taylor series]
    \label{def:ExtFedosovTaylorSeries}%
    Given the extended Fedosov derivation
    $\FedosovA = -\delta + D + L_{\algebra{A}} +
    \frac{1}{t}\ad(\varrho)$,
    the extended Fedosov Taylor series of $\xi \in \algebra{A}[[t]]$
    is defined by
    \begin{equation}
        \label{eq:ExtendedFedosovTaylorSeries}
        \tau_{\algebra{A}}(\xi)
        =
        \frac{1}
        {
          \id
          -
          \left[
              \delta^{-1},
              D + L_{\algebra{A}} + \frac{1}{t}\ad(\varrho)
          \right]
        }
        \xi.
    \end{equation}
\end{definition}
\begin{lemma}
    \label{lem:TauAsASeries}%
    For $\xi \in \algebra A[[t]]$ we have
    \begin{equation}
        \label{eq:SigmaTau}
        \sigma(\tau_{\algebra{A}}(\xi))
        =
        \xi.
    \end{equation}
    Moreover, the map
    $\tau_{\algebra{A}}\colon \algebra{A}[[t]] \longrightarrow
    \ker\FedosovA \cap \ker \dega$
    is a $\ring{R}[[t]]$-linear isomorphism starting with
    \begin{equation}
        \label{eq:LowestDegOrdersTau}
        \tau_{\algebra{A}}(\xi)
        =
        \sum_{k = 0}^\infty
        \left[
            \delta^{-1},
            D + L_{\algebra{A}} + \tfrac{1}{t} \ad(\varrho)
        \right]^k (\xi)
        =
        \xi \tensor 1 \tensor 1
        +
        e_i \acts\xi \tensor e^i \tensor 1
        + \cdots
    \end{equation}
    in zeroth and first order of the total degree.
\end{lemma}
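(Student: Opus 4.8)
The plan is to deduce all assertions from Corollary~\ref{cor:TrivialCohomology} together with two elementary degree-counting facts about the operator
\[
A = \left[\delta^{-1},\, D + L_{\algebra{A}} + \tfrac{1}{t}\ad(\varrho)\right],
\]
so that by \eqref{eq:ExtendedFedosovTaylorSeries} one has $\tau_{\algebra{A}}(\xi) = \frac{1}{\id - A}\xi = \sum_{k\ge 0} A^k(\xi)$, the geometric series converging in the total-degree filtration because $A$ strictly raises $\Deg$, as already noted in the proof of Proposition~\ref{proposition:CohomologyExtendedFedosov}.

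First I would establish \eqref{eq:SigmaTau} by showing $\sigma\circ A = 0$. Writing $A(a) = \delta^{-1}(D + L_{\algebra{A}} + \tfrac{1}{t}\ad(\varrho))(a) - (D + L_{\algebra{A}} + \tfrac{1}{t}\ad(\varrho))\delta^{-1}(a)$, the first summand lies, when non-zero, in symmetric degree $\ge 1$ because $\delta^{-1}$ raises $\degs$ by one; the second summand lies in antisymmetric degree $\ge 1$ because each of $D$, $L_{\algebra{A}}$ and $\tfrac{1}{t}\ad(\varrho)$ raises $\dega$ by one — the last one because $\varrho\in\mathcal{W}_2\tensor\Anti^1$ has pure antisymmetric degree one and $\circs = \mu\circ\E^{\frac{t}{2}\mathcal{P}}$ does not mix the antisymmetric grading, since $\mathcal{P}$ acts only on the symmetric tensor factors and $\mu$ is additive in $\dega$. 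In either case $\sigma$ annihilates the outcome, whence $\sigma(\tau_{\algebra{A}}(\xi)) = \sigma(\xi) = \xi$.

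The same bookkeeping shows that $A$ preserves the antisymmetric degree: $\delta^{-1}$ lowers $\dega$ by one while $D + L_{\algebra{A}} + \tfrac{1}{t}\ad(\varrho)$ raises it by one, so both summands of the commutator are $\dega$-preserving. Hence $A^k(\xi)\in\ker\dega$ for all $k$, and therefore $\tau_{\algebra{A}}(\xi)\in\algebra{A}\tensor\mathcal{W}\tensor\Anti^0$. Since moreover $\frac{1}{\id - A}\sigma(\tau_{\algebra{A}}(\xi)) = \frac{1}{\id - A}\xi = \tau_{\algebra{A}}(\xi)$, the element $a = \tau_{\algebra{A}}(\xi)$ satisfies \eqref{eq:aInKernelDA}, so Corollary~\ref{cor:TrivialCohomology} yields $\FedosovA \tau_{\algebra{A}}(\xi) = 0$; thus $\tau_{\algebra{A}}$ takes values in $\ker\FedosovA\cap\ker\dega$. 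It is $\ring{R}[[t]]$-linear because $A$ is, and injective because $\sigma\circ\tau_{\algebra{A}} = \id$ by \eqref{eq:SigmaTau}. For surjectivity, any $a\in\ker\FedosovA\cap\ker\dega$ lies in $\algebra{A}\tensor\mathcal{W}\tensor\Anti^0$, so Corollary~\ref{cor:TrivialCohomology} applies and gives $a = \frac{1}{\id - A}\sigma(a) = \tau_{\algebra{A}}(\sigma(a))$ with $\sigma(a)\in\algebra{A}[[t]]$; hence $\tau_{\algebra{A}}$ is an isomorphism onto $\ker\FedosovA\cap\ker\dega$.

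Finally, for the low-order expansion I would simply unwind the series. The $k=0$ term is $\xi\tensor 1\tensor 1$. For $k=1$, one has $\delta^{-1}\xi = 0$ (as $\xi$ sits in symmetric and antisymmetric degree zero), $D(\xi\tensor 1\tensor 1) = 0$ since $\nabla$ and $\delta_\CE$ kill constants, and $\tfrac{1}{t}\ad(\varrho)(\xi\tensor 1\tensor 1) = 0$ since $\xi\tensor 1\tensor 1$ is central by Lemma~\ref{lem:Center}; hence $A(\xi) = \delta^{-1}L_{\algebra{A}}(\xi\tensor 1\tensor 1) = \delta^{-1}(e_i\acts\xi\tensor 1\tensor e^i) = e_i\acts\xi\tensor e^i\tensor 1$, while all remaining $A^k(\xi)$ with $k\ge 2$ have total degree $\ge 2$. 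The only point requiring genuine care is the assertion that $\tfrac{1}{t}\ad(\varrho)$ raises $\dega$ by \emph{exactly} one — that it neither lowers nor merely preserves the antisymmetric degree — which is precisely why the argument above records that $\circs$ never mixes the antisymmetric grading; everything else is routine verification.
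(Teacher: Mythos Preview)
Your proof is correct and follows essentially the same route as the paper's, which simply invokes Corollary~\ref{cor:TrivialCohomology} for the isomorphism claim and does a brief degree count for the low-order expansion. You have filled in the details the paper leaves implicit---in particular the verification that $\sigma\circ A = 0$ (needed for \eqref{eq:SigmaTau}) and that $A$ preserves the antisymmetric degree (needed to land in $\ker\dega$ and to invoke the Corollary)---but the underlying argument is the same.
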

\begin{proof}
    The isomorphism property follows directly from
    Corollary~\ref{cor:TrivialCohomology}.  The commutator
    $[\delta^{-1}, D + L_{\algebra{A}} + \frac{1}{t}\ad(\varrho)]$
    raises the total degree at least by one, thus the zeroth and first
    order terms in the total degree come from the terms with $k = 0$
    and $k = 1$ in the geometric series in
    \eqref{eq:LowestDegOrdersTau}.  Here it is easy to see that the
    only non-trivial contribution is
    \[
    \left[
        \delta^{-1},
        D + L_{\algebra{A}} + \tfrac{1}{t} \ad(\varrho)
    \right] \xi
    =
    L_{\algebra{A}} \xi,
    \]
    proving the claim in \eqref{eq:LowestDegOrdersTau}. Note that
    already for $k = 2$ we get also contributions of $S$ and
    $\ad(\varrho)$.
\end{proof}

Given the $\ring{R}[[t]]$-linear isomorphism
$\tau_{\algebra{A}}\colon \algebra{A}[[t]] \longrightarrow \ker
\FedosovA \cap \ker \dega$
we can turn $\algebra{A}[[t]]$ into an algebra by pulling back the
deformed product: note that the kernel of a derivation is always a
subalgebra and hence the intersection $\ker\FedosovA \cap \ker \dega$
is also a subalgebra.  This allows us to obtain a universal
deformation formula for any $\uea{\lie{g}}$-module algebra
$\algebra{A}$:
\begin{theorem}[Universal deformation formula]
    \label{theorem:StarProduct}%
    Let $\lie{g}$ be a Lie algebra with non-degenerate
    $r$-matrix. Moreover, let $s \in \Sym^2 \lie{g}$ be such that
    there exists a symplectic torsion-free covariant derivative
    $\nabla$ with $s$ being covariantly constant.  Consider then
    $\pi = r + s$. Finally, let $\Omega \in t \Anti^2 \lie{g}^*[[t]]$
    be a formal series of $\delta_\CE$-closed two-forms.  Then for
    every associative algebra $\algebra{A}$ with action of $\lie{g}$
    by derivations one obtains an associative deformation
    $m_\star^{\algebra{A}}\colon \algebra{A}[[t]]\times\algebra A[[t]]
    \longrightarrow \algebra{A}[[t]]$ by
    \begin{equation}
        \label{eq:TheRealUDF}
        m_\star^{\algebra{A}} (\xi, \eta)
        =
        \sigma\left(
            \msA
            (\tau_{\algebra{A}}(\xi), \tau_{\algebra{A}}(\eta))
        \right)
    \end{equation}
    Writing simply $\star = \star_{\Omega, \nabla, s}$ for this new
    product, one has
    \begin{equation}
        \label{eq:TheStarProduct}
        \xi \star \eta
        =
        \xi \cdot \eta
        +
        \frac{t}{2} \pi^{ij} (e_i \acts\xi) \cdot (e_j \acts \eta)
        +
        \mathcal{O}(t^2)
    \end{equation}
    for $\xi, \eta \in \algebra{A}$.
\end{theorem}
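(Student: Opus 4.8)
The structure here mirrors the classical Fedosov argument from symplectic deformation quantization, so the plan is to follow that template closely, using the pieces already set up in the excerpt. First, I would observe that all the hard work is already done: $\tau_{\algebra A}$ is an $\ring{R}[[t]]$-linear isomorphism onto $\ker\FedosovA \cap \ker\dega$ by Lemma~\ref{lem:TauAsASeries}; this intersection is a subalgebra of the enlarged Fedosov algebra with respect to $\msA$ because $\FedosovA$ and $\dega$ are both derivations of $\msA$ (the former by construction, the latter by Proposition~\ref{prop:StarProductAss} extended trivially to the $\algebra A$-factor). Hence $m_\star^{\algebra A}(\xi,\eta) = \sigma(\msA(\tau_{\algebra A}(\xi),\tau_{\algebra A}(\eta)))$ is automatically associative: associativity of $\msA$ passes to the subalgebra, and $\sigma = \tau_{\algebra A}^{-1}$ restricted to that subalgebra (using $\sigma(\tau_{\algebra A}(\xi))=\xi$ from \eqref{eq:SigmaTau}) is an algebra isomorphism onto $\algebra A[[t]]$ with the pulled-back product. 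So the first step is simply to spell out this transport-of-structure argument.

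Second, I would check that $m_\star^{\algebra A}$ is genuinely a deformation, i.e. that it reduces to $\mu_{\algebra A}$ in zeroth order of $t$ and is given by an $\ring{R}[[t]]$-bilinear bidifferential-type expression. Zeroth order follows from $\tau_{\algebra A}(\xi) = \xi\tensor 1\tensor 1 + \mathcal{O}(\text{total degree }1)$ together with $\circs$ deforming $\mu$: modulo higher total degree, $\msA(\tau_{\algebra A}(\xi),\tau_{\algebra A}(\eta)) = (\xi\cdot\eta)\tensor 1\tensor 1 + \cdots$, and applying $\sigma$ picks out exactly $\xi\cdot\eta$ at order $t^0$.

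Third, and this is the only real computation, I would extract the first-order term to verify \eqref{eq:TheStarProduct}. Using the expansion $\tau_{\algebra A}(\xi) = \xi\tensor 1\tensor 1 + e_i\acts\xi\tensor e^i\tensor 1 + \cdots$ from \eqref{eq:LowestDegOrdersTau}, I compute $\msA(\tau_{\algebra A}(\xi),\tau_{\algebra A}(\eta))$ up to total degree $2$. The relevant contributions to antisymmetric and symmetric degree $0$ (which survive $\sigma$) come from: the product of the two degree-$1$ terms $e_i\acts\xi\tensor e^i$ and $e_j\acts\eta\tensor e^j$ under $\circs$, whose $\frac{t}{2}\mathcal{P}$-part contracts $e^i$ and $e^j$ via $\pi^{ij}\inss(e_i)\tensor\inss(e_j)$ to give $\frac{t}{2}\pi^{ij}(e_i\acts\xi)\cdot(e_j\acts\eta)\tensor 1$; the undeformed $\mu$-product of these same terms lands in symmetric degree $2$ and is killed by $\sigma$; and the cross terms between degree-$0$ and degree-$2$ pieces of $\tau_{\algebra A}$ also do not reach symmetric/antisymmetric degree $0$ at order $t$. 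Collecting, $\sigma$ of the result is $\xi\cdot\eta + \frac{t}{2}\pi^{ij}(e_i\acts\xi)\cdot(e_j\acts\eta) + \mathcal{O}(t^2)$, as claimed.

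The main obstacle — though it is more bookkeeping than genuine difficulty — is the degree-counting in the third step: one must be careful that no other term of $\tau_{\algebra A}$ (in particular the $k=2$ contributions involving $\varrho$ and the curvature, flagged at the end of Lemma~\ref{lem:TauAsASeries}) contributes to the symmetric and antisymmetric degree-zero part at order $t^1$. Since $\varrho \in \mathcal{W}_2\tensor\Anti^1$ starts in total degree $3$ and the relevant part of $\circs$ beyond $\mu$ starts at order $t$ (total degree $2$), any such contamination would appear only at total degree $\geq 2$ in a way that, after $\sigma$, shows up at order $t^2$ or higher; I would make this precise by tracking the total degree together with the $t$-degree simultaneously, using $\Deg = \degs + 2\deg_t$ and the fact that $\sigma$ projects onto $\degs = \dega = 0$.
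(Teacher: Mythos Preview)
Your proposal is correct and follows precisely the paper's own argument: associativity by transport of structure along the isomorphism $\tau_{\algebra A}$ onto the subalgebra $\ker\FedosovA\cap\ker\dega$, and the first-order formula as a consequence of the expansion in Lemma~\ref{lem:TauAsASeries}. The paper's proof is two sentences and leaves the degree-counting you describe in your third step entirely implicit; your more careful tracking of which $(0,2)$, $(1,1)$, $(2,0)$ total-degree combinations survive $\sigma$ at order $t^1$ is exactly what is needed to make that sentence rigorous, and your analysis of why the $\varrho$-contributions enter only at total degree $\ge 3$ is correct.
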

\begin{proof}
    The product $m_\star^{\algebra{A}}$ is associative, because $\msA$
    is associative and $\tau_{\algebra{A}}$ is an isomorphism onto a
    subalgebra with inverse $\sigma$.  The second part is a direct
    consequence of Lemma~\ref{lem:TauAsASeries}.
\end{proof}
\begin{remark}
    The above theorem can be further generalized by observing that
    given a Poisson structure on $\algebra{A}$ induced by a generic
    bivector on $\lie{g}$, we can reduce to the quotient
    $\lie{g} / \ker\acts$ and obtain an $r$-matrix on the quotient,
    inducing the same Poisson structure.
\end{remark}

%
%

\section{Universal Construction for Drinfel'd Twists}
\label{sec:UniversalDeformationFormulaTwist}

Let us consider the particular case in which $\algebra{A}$ is the
tensor algebra $(\Tensor^\bullet(\uea{\lie{g}}), \tensor)$.  In this
case, we denote by $L$ the operator $L_{\Tensor^\bullet(\uea{\lie
    g})}\colon \Tensor^\bullet(\uea{\lie{g}})\tensor
\mathcal{W}\tensor\Anti^\bullet \longrightarrow
\Tensor^\bullet(\uea{\lie{g}})\tensor \mathcal{W}\tensor\Anti^\bullet
$, which is given by
\begin{equation}
    \label{eq:LforUg}
    L_{\Tensor^\bullet(\uea{\lie{g}})}(\xi\tensor f\tensor \alpha)
    =
    L_{e_i}\xi\tensor f\tensor e^i\wedge \alpha.
\end{equation}
Here $L_{e_i}$ is the left multiplication in $\uea{\lie{g}}$ of the
element $e_i$ extended as a derivation of the tensor product.  Note
that it is independent of the choice of the basis in $\lie{g}$.

Applying the results discussed in the last section, we obtain a star
product for the tensor algebra over $\uea{\lie{g}}$ as a particular
case of Theorem~\ref{theorem:StarProduct}:
\begin{corollary}
    \label{cor:StarProduct}%
    The map $m_\star\colon \Tensor^\bullet(\uea{\lie
      g})[[t]]\times\Tensor^\bullet(\uea{\lie{g}})[[t]]
    \longrightarrow \Tensor^\bullet(\uea{\lie{g}})[[t]]$ defined by
    \begin{equation}
        \label{eq:StarProductForTensorUg}
        m_\star (\xi,\eta)
        =
        \xi\star \eta
        =
        \sigma (\ms(\tau(\xi),\tau(\eta)))
    \end{equation}
    is an associative product and
    \begin{equation}
        \label{eq:ClassLimTensorAlgebraStar}
        \xi\star \eta
        =
        \xi\tensor \eta
        +
        \frac{t}{2} \pi^{ij}L_{e_i}\xi\tensor L_{e_j}\eta
        +
        \mathcal{O}(t^2)
    \end{equation}
    for $\xi, \eta \in \Tensor^\bullet(\uea{\lie{g}})$.
\end{corollary}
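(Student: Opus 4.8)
The plan is to recognize Corollary~\ref{cor:StarProduct} as nothing more than the instance of Theorem~\ref{theorem:StarProduct} obtained by taking $\algebra{A} = (\Tensor^\bullet(\uea{\lie{g}}), \tensor)$ with the action $\acts$ of $\lie{g}$ given by $X \acts \xi = L_X \xi$ for $X \in \lie{g}$, where $L$ is the operator from \eqref{eq:LforUg}. The standing hypotheses of Theorem~\ref{theorem:StarProduct} (non-degenerate $r$-matrix, a symplectic torsion-free $\nabla$ with $s$ covariantly constant, $\pi = r + s$, and a $\delta_\CE$-closed $\Omega \in t\Anti^2\lie{g}^*[[t]]$) are carried over unchanged, so the only thing that genuinely requires an argument is that $X \mapsto L_X$ really defines an action of $\lie{g}$ on $\Tensor^\bullet(\uea{\lie{g}})$ by derivations of the tensor product. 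Once this is in place, the corollary follows by literally specializing \eqref{eq:TheRealUDF} and \eqref{eq:TheStarProduct}.

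First I would check the derivation property: by construction $L_{e_i}$ is defined on a factorizing tensor $\xi_1 \tensor \cdots \tensor \xi_m \in \Tensor^\bullet(\uea{\lie{g}})$ by the Leibniz rule (left multiplication by $e_i$ in $\uea{\lie{g}}$ applied to one slot at a time, summed over slots), hence it is a derivation of $\tensor$ by definition, and extending $\ring{R}$-linearly preserves this. Second I would verify that $X \mapsto L_X$ is a Lie algebra homomorphism into $\End(\Tensor^\bullet(\uea{\lie{g}}))$: the commutator $[L_X, L_Y]$ of two derivations is again a derivation, and on the generating subspace $\uea{\lie{g}} \subseteq \Tensor^\bullet(\uea{\lie{g}})$ it restricts to left multiplication by $XY - YX = [X,Y]$, i.e. it agrees there with $L_{[X,Y]}$; since two derivations agreeing on generators agree everywhere, $[L_X, L_Y] = L_{[X,Y]}$. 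This also shows, as already remarked after \eqref{eq:LforUg}, that $L$ is independent of the choice of basis of $\lie{g}$.

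With this established, Theorem~\ref{theorem:StarProduct} applies directly to $\algebra{A} = \Tensor^\bullet(\uea{\lie{g}})$: here $\tau$ is $\tau_{\algebra{A}}$ and $\ms$ is $\msA$ in the sense of Section~\ref{sec:UniversalDeformationFormula}, formula \eqref{eq:StarProductForTensorUg} is exactly \eqref{eq:TheRealUDF} for this algebra, so $m_\star$ is associative because $\msA$ is associative and $\tau$ is an isomorphism onto a subalgebra with inverse $\sigma$; and the semiclassical expansion \eqref{eq:ClassLimTensorAlgebraStar} is the instance of \eqref{eq:TheStarProduct} obtained by substituting $e_i \acts \xi = L_{e_i}\xi$ and $e_j \acts \eta = L_{e_j}\eta$. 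I do not expect any real obstacle: the entire content of the corollary lies in the identification of the $\uea{\lie{g}}$-module algebra, and the only bookkeeping point to be careful about is the compatibility of the derivation-extension of the $e_i$ with the Lie bracket, which is the short verification sketched above.
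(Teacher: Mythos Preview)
Your proposal is correct and follows exactly the paper's approach: the corollary is stated there without a separate proof, simply as the instance of Theorem~\ref{theorem:StarProduct} for $\algebra{A} = (\Tensor^\bullet(\uea{\lie{g}}), \tensor)$ with the left-multiplication action extended as a derivation. Your added verification that $X \mapsto L_X$ is a Lie algebra action by derivations of $\tensor$ is the only content not made explicit in the paper, and it is routine as you describe.
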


In the following we prove that the star product $m_\star$ defined
above allows to construct a formal Drinfel'd twist. Let us define, for
any linear map
\begin{equation}
    \label{eq:PhiUktoUl}
    \Phi \colon \uea{\lie{g}}^{\tensor k}
    \longrightarrow
    \uea{\lie{g}}^{\tensor \ell},
\end{equation}
the \emph{lifted} map
\begin{equation}
    \label{eq:LiftedMap}
    \Phi^\lift \colon
    \uea{\lie{g}}^{\tensor k}
    \tensor
    \mathcal{W}\tensor\Anti^\bullet
    \ni \xi\tensor f\tensor \alpha
    \; \mapsto \;
    \Phi(\xi)\tensor f\tensor\alpha \in
    \uea{\lie{g}}^{\tensor \ell}
    \tensor
    \mathcal{W}\tensor\Anti^\bullet,
\end{equation}
obeying the following simple properties:
\begin{lemma}
    \label{lem:LiftingAndStructures}%
    Let $\Phi\colon \uea{\lie{g}}^{\tensor k} \longrightarrow
    \uea{\lie{g}}^{\tensor \ell}$ and $\Psi\colon
    \uea{\lie{g}}^{\tensor m} \longrightarrow \uea{\lie{g}}^{\tensor
      n}$ be linear maps.
    \begin{lemmalist}
    \item The lifted map $\Phi^\lift$ commutes with $\delta$,
        $\delta^{-1}$, $D$, and $\ad(x)$ for all
        $x \in \mathcal{W}\tensor \Anti^\bullet$.
    \item We have
        \begin{equation}
            \label{eq:ListCommutesWithSigma}
            \Phi \circ
            \sigma\at{
              \uea{\lie{g}}^{\tensor k}
              \tensor
              \mathcal{W}\tensor \Anti^\bullet
            }
            =
            \sigma\at{
              \uea{\lie{g}}^{\tensor \ell}
              \tensor
              \mathcal{W}\tensor\Anti^\bullet
            }
            \circ \Phi^\lift.
        \end{equation}
    \item We have
        \begin{equation}
            \label{eq:LiftAndProduct}
            (\Phi\tensor\Psi)^\lift \ms(a_1,a_2)
            =
            \ms(\Phi^\lift(a_1), \Psi^\lift(a_2)),
        \end{equation}
        for any
        $a_1 \in \uea{\lie{g}}^{\tensor k} \tensor \mathcal{W} \tensor
        \Anti^\bullet$
        and
        $a_2 \in \uea{\lie{g}}^{\tensor m} \tensor \mathcal{W} \tensor
        \Anti^\bullet$.
    \end{lemmalist}
\end{lemma}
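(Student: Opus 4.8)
The plan is to exploit the single structural observation that the lifted map $\Phi^\lift$ touches only the $\uea{\lie g}^{\tensor k}$-tensor factor, whereas every operator occurring in the first assertion --- $\delta$, $\delta^{-1}$, $D$ and $\ad(x)$ with $x\in\mathcal{W}\tensor\Anti^\bullet$ --- was, by the extension convention fixed for the enlarged Fedosov algebra, defined to act trivially on the $\uea{\lie g}$-factor. Concretely, on factorizing tensors $\xi\tensor f\tensor\alpha$ with $\xi\in\uea{\lie g}^{\tensor k}$ one has $\Phi^\lift=\Phi\tensor\id_{\mathcal{W}\tensor\Anti^\bullet}$, whereas each $T\in\{\delta,\delta^{-1},D,\ad(x)\}$ is of the form $\id_{\uea{\lie g}^{\tensor k}}\tensor T_{\mathcal{W}\tensor\Anti^\bullet}$. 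Operators acting on complementary tensor factors commute, so $\Phi^\lift\circ T=\Phi\tensor T_{\mathcal{W}\tensor\Anti^\bullet}=T\circ\Phi^\lift$ on factorizing elements, hence everywhere by $\ring{R}[[t]]$-linearity together with completeness in the total-degree filtration. This is the first assertion.

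For the second assertion I would recall that $\sigma$ is the projection of $\mathcal{W}\tensor\Anti^\bullet$ onto its symmetric and antisymmetric degree zero part, i.e. onto the $\ring{R}[[t]]$-summand, so that on $\uea{\lie g}^{\tensor k}\tensor\mathcal{W}\tensor\Anti^\bullet$ it is $\id_{\uea{\lie g}^{\tensor k}}\tensor\sigma$ followed by the canonical identification $\uea{\lie g}^{\tensor k}\tensor\ring{R}[[t]]\cong\uea{\lie g}^{\tensor k}[[t]]$. On a factorizing tensor this gives $\sigma(\xi\tensor f\tensor\alpha)=\sigma(f\tensor\alpha)\,\xi\in\uea{\lie g}^{\tensor k}[[t]]$, whence $\Phi\bigl(\sigma(\xi\tensor f\tensor\alpha)\bigr)=\sigma(f\tensor\alpha)\,\Phi(\xi)=\sigma\bigl(\Phi(\xi)\tensor f\tensor\alpha\bigr)=\sigma\bigl(\Phi^\lift(\xi\tensor f\tensor\alpha)\bigr)$, and $\ring{R}[[t]]$-linearity finishes the identity \eqref{eq:ListCommutesWithSigma}.

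For the last assertion I would simply unwind the definition of $\ms=\msA$ for $\algebra{A}=\Tensor^\bullet(\uea{\lie g})$: for $a_1=\xi_1\tensor f_1\tensor\alpha_1$ with $\xi_1\in\uea{\lie g}^{\tensor k}$ and $a_2=\xi_2\tensor f_2\tensor\alpha_2$ with $\xi_2\in\uea{\lie g}^{\tensor m}$ one has $\ms(a_1,a_2)=(\xi_1\tensor\xi_2)\tensor\bigl((f_1\tensor\alpha_1)\circs(f_2\tensor\alpha_2)\bigr)$, where $\xi_1\tensor\xi_2\in\uea{\lie g}^{\tensor(k+m)}$ is the tensor product in $\Tensor^\bullet(\uea{\lie g})$. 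Applying $(\Phi\tensor\Psi)^\lift$ affects only this first factor and, by the very definition of $\Phi\tensor\Psi\colon\uea{\lie g}^{\tensor(k+m)}\to\uea{\lie g}^{\tensor(\ell+n)}$, turns it into $\bigl(\Phi(\xi_1)\tensor\Psi(\xi_2)\bigr)\tensor\bigl((f_1\tensor\alpha_1)\circs(f_2\tensor\alpha_2)\bigr)$, which is precisely $\ms\bigl(\Phi(\xi_1)\tensor f_1\tensor\alpha_1,\ \Psi(\xi_2)\tensor f_2\tensor\alpha_2\bigr)=\ms\bigl(\Phi^\lift(a_1),\Psi^\lift(a_2)\bigr)$; extending $\ring{R}[[t]]$-bilinearly yields \eqref{eq:LiftAndProduct}. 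The only point requiring a little care --- and the closest thing to an obstacle here --- is purely bookkeeping: one must check that the Weyl-side operators (in particular $\delta^{-1}$, defined piecewise by degree, and $\ad(x)$, whose argument $x$ lies in $\mathcal{W}\tensor\Anti^\bullet$ and not in the $\uea{\lie g}$-factor) really have the tensor form $\id\tensor(\cdot)$, and that $\sigma$ is read through the identification above; once this is set up all three identities are immediate on factorizing tensors and pass to all of $\uea{\lie g}^{\tensor k}\tensor\mathcal{W}\tensor\Anti^\bullet$ by linearity and filtration-completeness.
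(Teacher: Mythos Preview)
Your proof is correct and is exactly the natural argument: the lifted map acts only on the $\uea{\lie g}$-factor while $\delta$, $\delta^{-1}$, $D$, $\ad(x)$, $\sigma$, and $\circs$ act only on the $\mathcal{W}\tensor\Anti^\bullet$-factor, so everything reduces to operators on complementary tensor factors commuting. The paper itself gives no proof of this lemma, treating the claims as immediate from the definitions; your write-up simply makes explicit the bookkeeping the authors leave to the reader.
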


Let $\eta \in \uea{\lie{g}}^{\tensor k}[[t]]$ be given. Then we can
consider the right multiplication by $\eta$ using the algebra
structure of $\uea{\lie{g}}^{\tensor k}[[t]]$ coming from the
universal enveloping algebra as a map
\begin{equation}
    \label{eq:RightMultiplication}
    \cdot \eta\colon
    \uea{\lie{g}}^{\tensor k}\ni \xi
    \; \mapsto \;
    \xi \cdot \eta \in  \uea{\lie{g}}^{\tensor k}.
\end{equation}
To this map we can apply the above lifting process and extend it this
way to a $\ring{R}[[t]]$-linear map such that on factorizing
elements
\begin{equation}
    \label{eq:LiftedRightMultiplication}
    \cdot \eta\colon
    \uea{\lie{g}}^{\tensor k}
    \tensor
    \mathcal{W} \tensor \Anti^\bullet
    \ni \xi \tensor f \tensor \alpha
    \; \mapsto \;
    (\xi \cdot \eta) \tensor f \tensor \alpha
    \in  \uea{\lie{g}}^{\tensor k},
\end{equation}
where we simply write $\cdot \eta$ instead of $(\cdot \eta)^\lift$.
Note that $a \cdot \eta$ is only defined if the tensor degrees $k$ of
$\eta \in \Tensor^k(\uea{\lie{g}})$ and $a$ coincide since we use the
algebra structure inherited from the universal enveloping algebra.

In the following we denote by $\FedosovTU$ the derivation
$\mathscr{D}_{\Tensor^\bullet(\uea{\lie{g}})}$ as obtained in
\eqref{eq:ExtendedFedosovDerivation}. We collect some properties how
the lifted right multiplications match with the extended Fedosov
derivation:
\begin{lemma}
    \label{lem:RactsProperties}%
    \begin{lemmalist}
    \item\label{item:RactsI} For any
        $a \in \Tensor^k(\uea{\lie{g}}) \tensor \mathcal{W} \tensor
        \Anti^\bullet$
        and $\xi \in \Tensor^k(\uea{\lie{g}})[[t]]$, we have
        $\FedosovTU(a \cdot \xi) = \FedosovTU(a) \cdot \xi$
    \item\label{item:RactsII} The extended Fedosov Taylor series
        $\tau$ preserves the tensor degree of elements in
        $\Tensor^\bullet(\uea{\lie{g}})$.
    \item\label{item:RactsIII} For any
        $\xi, \eta \in \Tensor^k(\uea{\lie{g}})[[t]]$, we have
        $\tau(\xi\cdot\eta) = \tau(\xi) \cdot \eta$.
    \item\label{item:RactsVI} For any
        $a_1 \in \Tensor^k(\uea{\lie{g}}) \tensor \mathcal{W} \tensor
        \Anti^\bullet$
        and
        $a_2 \in \Tensor^\ell(\uea{\lie{g}}) \tensor \mathcal{W}
        \tensor \Anti^\bullet$
        as well as $\eta_1 \in \Tensor^k(\uea{\lie{g}})[[t]]$ and
        $\eta_2 \in \Tensor^\ell(\uea{\lie{g}})[[t]]$, we have
        $\ms(a_1 \cdot \eta_1, a_2 \cdot_l \eta_2) = \ms(a_1, a_2)
        \cdot (\eta_1 \tensor \eta_2)$.
  \end{lemmalist}
\end{lemma}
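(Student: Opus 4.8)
The plan is to treat the four items in order, since each builds on the previous one. For \refitem{item:RactsI}, write $\FedosovTU = -\delta + D + L + \frac{1}{t}\ad(\varrho)$ and check that each summand commutes with the lifted right multiplication $\cdot\,\xi$. The maps $\delta$, $D$ and $\ad(\varrho)$ act only on the $\mathcal{W}\tensor\Anti^\bullet$-factor, so they trivially commute with $\cdot\,\xi$, which acts only on the $\Tensor^k(\uea{\lie g})$-factor — this is exactly the content of the first part of Lemma~\ref{lem:LiftingAndStructures} applied to $\Phi = (\cdot\,\xi)$. The only point requiring a word is $L = L_{\Tensor^\bullet(\uea{\lie g})}$: it is built from the left multiplications $L_{e_i}$ on $\uea{\lie g}$ extended as derivations of the tensor product, and left multiplication commutes with right multiplication in any associative algebra; extending both as derivations of $\tensor$ preserves this commutation on $\Tensor^\bullet(\uea{\lie g})$, hence $L$ commutes with $\cdot\,\xi$ on the enlarged algebra. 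Adding up gives $\FedosovTU(a\cdot\xi) = \FedosovTU(a)\cdot\xi$.

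Item \refitem{item:RactsII} follows because $\tau = \bigl(\id - [\delta^{-1}, D + L + \frac{1}{t}\ad(\varrho)]\bigr)^{-1}$ (Definition~\ref{def:ExtFedosovTaylorSeries}) and every operator occurring here — $\delta^{-1}$, $D$, $\ad(\varrho)$, and $L$ — preserves the tensor degree on the $\Tensor^\bullet(\uea{\lie g})$-factor: the first three act trivially on it, and $L_{e_i}$ is left multiplication by a degree-one element, which in $\uea{\lie g}$ stays in degree... more precisely, $L$ acts on each tensor slot by $\uea{\lie g}$-left multiplication, so it does not change the number of tensor factors. Hence the geometric series preserves the tensor degree, and so does $\tau$. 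For \refitem{item:RactsIII}, combine \refitem{item:RactsI} and \refitem{item:RactsII}: the element $\tau(\xi)\cdot\eta$ lies in $\ker\FedosovTU\cap\ker\dega$ by \refitem{item:RactsI} and the fact that $\cdot\,\eta$ commutes with $\dega$, and it lies in the correct tensor degree by \refitem{item:RactsII}; moreover $\sigma(\tau(\xi)\cdot\eta) = \sigma(\tau(\xi))\cdot\eta = \xi\cdot\eta$ by Lemma~\ref{lem:TauAsASeries} together with part \refitem{} of Lemma~\ref{lem:LiftingAndStructures} (compatibility of lifts with $\sigma$, equation~\eqref{eq:ListCommutesWithSigma}). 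Since $\tau$ is the unique $\ring{R}[[t]]$-linear section of $\sigma$ with image $\ker\FedosovTU\cap\ker\dega$ (Corollary~\ref{cor:TrivialCohomology}, Lemma~\ref{lem:TauAsASeries}), applied in the relevant tensor degree, we conclude $\tau(\xi\cdot\eta) = \tau(\xi)\cdot\eta$.

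Finally, \refitem{item:RactsVI} is the compatibility of $\ms$ with lifted right multiplications. Recall from \eqref{eq:ExtendedProduct} that on the $\Tensor^\bullet(\uea{\lie g})$-factor $\ms$ is just the tensor product of $\uea{\lie g}$-module... more precisely the concatenation/tensor product of the $\uea{\lie g}^{\tensor k}$ and $\uea{\lie g}^{\tensor\ell}$ factors, while on $\mathcal{W}\tensor\Anti^\bullet$ it is $\circs$. Lifted right multiplication by $\eta_1\tensor\eta_2$ acts on the $\Tensor^{k+\ell}(\uea{\lie g})$-slot by right multiplication in $\uea{\lie g}^{\tensor(k+\ell)}$, which factors as right multiplication by $\eta_1$ on the first $k$ slots and by $\eta_2$ on the remaining $\ell$ slots; it is untouched on $\mathcal{W}\tensor\Anti^\bullet$. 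So the identity reduces, slot by slot, to the elementary fact $(\xi_1\cdot\eta_1)\tensor(\xi_2\cdot\eta_2) = (\xi_1\tensor\xi_2)\cdot(\eta_1\tensor\eta_2)$ in the tensor algebra, combined with the fact that $\circs$ is unaffected — this is part \refitem{} of Lemma~\ref{lem:LiftingAndStructures}, equation~\eqref{eq:LiftAndProduct}, specialised to right-multiplication maps. The main obstacle, such as it is, is purely bookkeeping: keeping the tensor degrees straight so that "$\cdot\,\eta$" is defined (it requires matching tensor degree, as noted after \eqref{eq:LiftedRightMultiplication}), and making sure in \refitem{item:RactsIII} that the uniqueness of $\tau$ is invoked within a fixed tensor degree where Corollary~\ref{cor:TrivialCohomology} applies verbatim.
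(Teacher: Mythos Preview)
Your proof is correct and follows essentially the same approach as the paper's own proof: splitting $\FedosovTU$ into its summands and checking each commutes with the lifted right multiplication for part \refitem{item:RactsI}, invoking the uniqueness of $\tau$ as the section of $\sigma$ with image $\ker\FedosovTU\cap\ker\dega$ for part \refitem{item:RactsIII}, and verifying part \refitem{item:RactsVI} on factorizing tensors via the identity $(\xi_1\cdot\eta_1)\tensor(\xi_2\cdot\eta_2)=(\xi_1\tensor\xi_2)\cdot(\eta_1\tensor\eta_2)$. The only quibble is the phrase ``extending both as derivations of $\tensor$'' in part \refitem{item:RactsI}: right multiplication by $\eta\in\uea{\lie g}^{\tensor k}$ is not a derivation of $\tensor$; the clean justification is that $L_{e_i}$ on $\uea{\lie g}^{\tensor k}$ is left multiplication by the element $\sum_{j=1}^{k} 1^{\tensor(j-1)}\tensor e_i\tensor 1^{\tensor(k-j)}$, and left and right multiplications in the associative algebra $\uea{\lie g}^{\tensor k}$ commute by associativity.
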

\begin{proof}
    Let
    $\xi \tensor a \in \Tensor^k(\uea{\lie{g}}) \tensor \mathcal{W}
    \tensor \Anti^\bullet$
    and $\eta \in \Tensor^k(\uea{\lie{g}})$ then we have
    \begin{align*}
        \FedosovTU((\xi \tensor a) \cdot \eta)
        &=
        \FedosovTU((\xi \cdot \eta) \tensor a)
        \\
        &=
        L_{e_i} (\xi\cdot\eta) \tensor e^i\wedge a
        +
        (\xi \cdot \eta) \tensor \FedosovD(a)
        \\
        &=
        (L_{e_i}(\xi) \tensor e^i \wedge a) \cdot \eta
        +
        (\xi \tensor \FedosovD(a)) \cdot \eta
        \\
        &=
        \FedosovTU(a) \cdot \eta.
    \end{align*}
    This proves the first claim.  The second claim follows immediately
    from the fact that all operators defining $\tau$ do not change the
    tensor degree.  In order to prove the claim
    \refitem{item:RactsIII}, let us consider
    $\xi, \eta \in \Tensor^k(\uea{\lie{g}})[[t]]$. Then we have
    \begin{equation*}
        \FedosovTU(\tau(\xi)\cdot \eta)
        =
        \FedosovTU(\tau(\xi))\cdot \eta
        =
        0,
    \end{equation*}
    according to \refitem{item:RactsI}. Thus,
    $\tau(\xi) \cdot \eta \in \ker{\FedosovTU} \cap \ker \dega$ and
    therefore
    \begin{equation*}
        \tau(\xi) \cdot \eta
        =
        \tau(\sigma(\tau(\xi)\cdot \eta))
        =
        \tau(\sigma(\tau(\xi))\cdot \eta)
        =
        \tau(\xi\cdot\eta).
    \end{equation*}
    Finally, to prove the last claim we choose
    $\xi_1 \tensor f_1 \in \Tensor^k(\uea{\lie{g}}) \tensor
    \mathcal{W} \tensor \Anti^\bullet$
    and
    $\xi_2 \tensor f_2 \in \Tensor^\ell(\uea{\lie{g}}) \tensor
    \mathcal{W} \tensor \Anti^\bullet$
    as well as $\eta_1 \in \Tensor^k(\uea{\lie{g}}) [[t]]$ and
    $\eta_2 \in \Tensor^\ell(\uea{\lie{g}})[[t]]$. We obtain
    \begin{align*}
        \ms\left(
            (\xi_1\tensor f_1) \cdot \eta_1,
            (\xi_2\tensor f_2) \cdot \eta_2
        \right)
        &=
        \ms\left(
            (\xi_1 \cdot \eta_1) \tensor f_1,
            (\xi_2 \cdot \eta_2) \tensor f_2
        \right) \\
        &=
        \left(
            (\xi_1 \cdot \eta_1) \tensor (\xi_2 \cdot \eta_2)
        \right)
        \tensor (f_1 \circs f_2) \\
        &=
        \left(
            (\xi_1 \tensor \xi_2) \cdot (\eta_1 \tensor \eta_2)
        \right)
        \tensor (f_1 \circs f_2)\\
        &=
        \left(
            (\xi_1 \tensor \xi_2) \tensor (f_1 \circs f_2)
        \right) \cdot (\eta_1 \tensor \eta_2).
    \end{align*}
    This concludes the proof.
\end{proof}

From the above lemma, we observe that the isomorphism $\tau$ can be
computed for any element $\xi \in \Tensor^k(\uea{\lie{g}})[[t]]$ via
\begin{equation}
    \label{eq:tauxiExplicitly}
    \tau(\xi)
    =
    \tau(1^{\tensor k} \cdot \xi)
    =
    \tau(1^{\tensor k }) \cdot \xi,
\end{equation}
where $1 \in \uea{\lie{g}}$ is the unit element of the universal
enveloping algebra.  Moreover, from
Lemma~\ref{lem:LiftingAndStructures}, we have
\begin{equation}
    \label{eq:StarPr}
    \xi \star \eta
    =
    \sigma(\ms(\tau(\xi)\tensor \tau(\eta)))
    =
    (1^{\tensor k} \star 1^{\tensor \ell}) \cdot (\xi\tensor \eta)
\end{equation}
for $\xi \in \Tensor^k(\uea{\lie{g}})[[t]]$ and
$\eta \in \Tensor^\ell(\uea{\lie{g}})[[t]]$.  Thus $\star$ is entirely
determined by the values on tensor powers of the unit element of the
universal enveloping algebra. Note that the unit of $\star$ is the
unit element in $\ring{R} \subseteq \Tensor^\bullet(\uea{\lie{g}})$
of the \emph{tensor algebra} but not $1 \in \uea{\lie{g}}$.
\begin{lemma}
    \label{lem:LiftedHopfStrucures}%
    Let
    $\Delta\colon \uea{\lie{g}}[[t]] \longrightarrow
    \uea{\lie{g}}^{\tensor 2}[[t]]$
    be the coproduct of $\uea{\lie{g}}[[t]]$ and
    $\epsilon\colon \uea{\lie{g}}\to \ring{R}[[t]]$ the counit.
    \begin{lemmalist}
    \item We have
        \begin{equation}
            \label{eq:LcommutesWithDelta}
            L\at{
              \uea{\lie{g}}^{\tensor 2}
              \tensor \mathcal{W} \tensor \Anti^\bullet
            }
            \circ
            \Delta^\lift
            =
            \Delta^\lift
            \circ
            L\at{\uea{\lie{g}}\tensor \mathcal{W}\tensor \Anti^\bullet}.
        \end{equation}
    \item For the Fedosov-Taylor series one has
        \begin{equation}
            \Delta^\lift \circ \tau = \tau \circ \Delta.
        \end{equation}
    \item We have
        \begin{equation}
            \label{eq:EpsilonOfL}
            \epsilon^\lift
            \circ
            L\at{\uea{\lie{g}}\tensor \mathcal{W}\tensor \Anti^\bullet}
            =
            0.
        \end{equation}
    \item For the Fedosov-Taylor series one has
        \begin{equation}
            \label{eq:EpsilonListTau}
            \epsilon^\lift \circ \tau =\epsilon.
        \end{equation}
    \end{lemmalist}
\end{lemma}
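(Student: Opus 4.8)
The plan is to reduce all four assertions to the primitivity of Lie algebra elements together with the uniqueness characterisation of the extended Fedosov Taylor series. First I would prove part~\textit{i.)}, which is the only statement requiring a computation. Evaluating both sides on a factorising element $\xi \tensor f \tensor \alpha$ with $\xi \in \uea{\lie{g}}$, the slots $f$ and $e^i \wedge \alpha$ are untouched by $\Delta^\lift$, so for each basis vector $e_i$ the identity to be checked collapses to $\Delta(e_i \xi) = (e_i \tensor 1 + 1 \tensor e_i)\Delta(\xi)$ in $\uea{\lie{g}}^{\tensor 2}[[t]]$. This holds because $\Delta$ is an algebra homomorphism and every $e_i \in \lie{g}$ is primitive: $(e_i \tensor 1 + 1 \tensor e_i)\Delta(\xi) = \Delta(e_i)\Delta(\xi) = \Delta(e_i \xi)$, and the operator $e_i \tensor 1 + 1 \tensor e_i$ acting on $\uea{\lie{g}}^{\tensor 2}$ is exactly the derivation extension of left multiplication by $e_i$ appearing in \eqref{eq:LforUg}.

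For part~\textit{ii.)} I would use that, by Definition~\ref{def:ExtFedosovTaylorSeries} and Lemma~\ref{lem:TauAsASeries}, the series $\tau$ is built entirely from $\delta^{-1}$, $D$, $L$ and $\ad(\varrho)$. By Lemma~\ref{lem:LiftingAndStructures} the lift $\Delta^\lift$ commutes with $\delta$, $\delta^{-1}$, $D$ and $\ad(\varrho)$, and by part~\textit{i.)} it intertwines the restriction of $L$ to tensor degree one with the restriction of $L$ to tensor degree two; since $\Delta^\lift$ also commutes with $\dega$, it intertwines the extended Fedosov derivations $\FedosovTU$ in these two tensor degrees. Hence for $\xi \in \uea{\lie{g}}[[t]]$ the element $\Delta^\lift(\tau(\xi))$ lies in $\ker \FedosovTU \cap \ker \dega$ in tensor degree two, and by \eqref{eq:ListCommutesWithSigma} together with $\sigma \circ \tau = \id$ its $\sigma$-image is $\Delta(\xi)$. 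Since the extended Fedosov Taylor series is the unique $\ring{R}[[t]]$-linear section of $\sigma$ into $\ker \FedosovTU \cap \ker \dega$ within each fixed tensor degree (Lemma~\ref{lem:TauAsASeries} and Lemma~\ref{lem:RactsProperties}~\refitem{item:RactsII}), this forces $\Delta^\lift \circ \tau = \tau \circ \Delta$.

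Part~\textit{iii.)} is a one-line check: $\epsilon^\lift(L(\xi \tensor f \tensor \alpha)) = \epsilon(e_i \xi)\, f \tensor e^i \wedge \alpha = 0$, since $\epsilon$ is an algebra homomorphism with $\epsilon(e_i) = 0$ for $e_i \in \lie{g}$. For part~\textit{iv.)} I would then run the scheme of part~\textit{ii.)} with $\Delta$ replaced by $\epsilon$: by part~\textit{iii.)} and Lemma~\ref{lem:LiftingAndStructures}, the lift $\epsilon^\lift$ intertwines $\FedosovTU$ on tensor degree one with the plain Fedosov derivation $\FedosovD$ on $\mathcal{W} \tensor \Anti^\bullet$ and commutes with $\dega$, so $\epsilon^\lift(\tau(\xi))$ lies in $\ker \FedosovD \cap \ker \dega$ with $\sigma$-image $\epsilon(\xi)$ (again by \eqref{eq:ListCommutesWithSigma}). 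On the other hand $1 \tensor 1$ has vanishing symmetric degree, hence is central for $\circs$ by Lemma~\ref{lem:Center}, and it is clearly $\delta$-closed and $D$-closed, so $\FedosovD(1 \tensor 1) = 0$; therefore $\epsilon(\xi) \cdot (1 \tensor 1)$ is a second element of $\ker \FedosovD \cap \ker \dega$ with the same $\sigma$-image, and uniqueness gives $\epsilon^\lift(\tau(\xi)) = \epsilon(\xi)$.

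I do not expect a real obstacle. The only substance is part~\textit{i.)}, which is just the primitivity of $\lie{g}$ inside $\uea{\lie{g}}$, and parts~\textit{ii.)}--\textit{iv.)} are formal consequences of the commutation rules in Lemma~\ref{lem:LiftingAndStructures} and the uniqueness of the extended Fedosov Taylor series. The one point where care is needed is the bookkeeping of which copy of $L$ (and which Fedosov derivation) acts on which tensor degree, so that the intertwining relations are stated between the correct operators before uniqueness is invoked; and for part~\textit{iv.)} one must note that $1 \tensor 1$ is annihilated by $\FedosovD$, which is precisely what makes $\epsilon^\lift \circ \tau$ collapse onto the counit.
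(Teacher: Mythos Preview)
Your proof is correct and uses essentially the same ingredients as the paper: primitivity of $\lie{g}$ for parts~\textit{i.)} and~\textit{iii.)}, and the commutation of $\Delta^\lift$ and $\epsilon^\lift$ with the building blocks of $\tau$ (from Lemma~\ref{lem:LiftingAndStructures}) for parts~\textit{ii.)} and~\textit{iv.)}. The only difference is cosmetic: for~\textit{ii.)} and~\textit{iv.)} the paper pushes $\Delta^\lift$ (resp.\ $\epsilon^\lift$) directly through the geometric series defining $\tau$, whereas you instead package the same commutation relations as an intertwining of the extended Fedosov derivations and then invoke the uniqueness of the Fedosov Taylor series; both routes rest on the identical computations and yield the result with the same amount of work.
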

\begin{proof}
    Let
    $\xi \tensor f \tensor \alpha \in \uea{\lie{g}} \tensor
    \mathcal{W} \tensor \Anti^\bullet$ then we get
    \begin{align*}
        \Delta^\lift L(\xi \tensor f \tensor \alpha)
        &=
        \Delta^\lift\left(
            L_{e_i}(\xi) \tensor f \tensor e^i \wedge \alpha
        \right) \\
        &=
        \Delta^\lift\left(
            e_i \xi \tensor f \tensor e^i \wedge \alpha
        \right)\\
        &=
        \Delta(e_i \xi) \tensor f \tensor e^i \wedge \alpha \\
        &=
        \Delta (e_i) \cdot \Delta(\xi)
        \tensor f \tensor e^i \wedge \alpha \\
        &=
        (e_i\tensor 1 + 1\tensor e_i) \cdot \Delta(\xi)
        \tensor f \tensor e^i \wedge \alpha \\
        &=
        L_{e_i}(\Delta(\xi)) \tensor f \tensor e^i \wedge \alpha \\
        &=
        L \Delta^\lift (\xi\tensor f\tensor \alpha),
    \end{align*}
    since we extended the left multiplication by $e_i$ as a
    \emph{derivation} of the tensor product to higher tensor powers.
    Hence all the operators appearing in $\tau$ \emph{commute} with
    $\Delta^\lift$ and therefore we get the the second part.
    Similarly, we get
    \begin{align*}
        \epsilon^\lift (L(\xi \tensor f \tensor \alpha)
        &=
        \epsilon^\lift (e_i \xi \tensor f \tensor e^i \wedge \alpha)
        \\
        &=
        \epsilon(e_i \xi) \tensor f \tensor e^i \wedge \alpha \\
        &=
        \epsilon(e_i) \epsilon(\xi)
        \tensor f \tensor e^i \wedge \alpha \\
        &=
        0,
    \end{align*}
    where we used that $\epsilon$ vanishes on primitive elements of
    $\uea{\lie{g}}$. Since $\epsilon^\lift$ commutes with all other
    operators $\delta^{-1}$, $D$ and $\ad(\varrho)$ according to
    Lemma~\ref{lem:LiftingAndStructures}, we first get
    \[
    \epsilon^\lift
    \circ
    \left[\delta^{-1}, D + L + \tfrac{1}{t}\ad(\varrho)\right]
    =
    \left[\delta^{-1}, D + \tfrac{1}{t}\ad(\varrho)\right]
    \circ
    \epsilon^\lift.
    \]
    Hence for $\xi \in \uea{\lie{g}}[[t]]$ we have
    \begin{align*}
        \epsilon^\lift \tau(\xi)
        &=
        \epsilon^\lift
        \left(
            \sum_{k=0}^\infty
            \left[
                \delta^{-1}, D + L + \tfrac{1}{t} \ad(\varrho)
            \right]^k
            \xi
        \right) \\
        &=
        \sum_{k=0}^\infty
        \left[
            \delta^{-1}, D + \tfrac{1}{t} \ad(\varrho)
        \right]^k
        \epsilon^\lift (\xi) \\
        &=
        \epsilon(\xi),
    \end{align*}
    since $\epsilon^\lift(\xi) = \epsilon(\xi)$ is just a constant and
    hence unaffected by all the operators in the series. Thus only the
    zeroth term remains.
\end{proof}

This is now the last ingredient to show that the element $1 \star 1$
is the twist we are looking for:
\begin{theorem}
    \label{Thm:twist}
    The element $1 \star 1 \in \uea{\lie{g}}^{\tensor 2}[[t]]$ is a
    twist such that
    \begin{equation}
        \label{eq:TheTwistAtLast}
        1\star 1
        =
        1\tensor 1 + \frac{t}{2} \pi + \mathcal{O}(t^2).
    \end{equation}
\end{theorem}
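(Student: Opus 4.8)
The plan is to verify the three defining properties of a Drinfel'd twist, \eqref{eq:TwistConditionI}, \eqref{eq:TwistConditionII} and \eqref{eq:TwistConditionIII}, for the element $\twist{F} = 1 \star 1$, where $\star$ is the product on $\Tensor^\bullet(\uea{\lie{g}})[[t]]$ from Corollary~\ref{cor:StarProduct}. The normalization \eqref{eq:TheTwistAtLast}, which also gives \eqref{eq:TwistConditionIII}, is immediate from \eqref{eq:ClassLimTensorAlgebraStar} with $\xi = \eta = 1 \in \uea{\lie{g}}$, since $L_{e_i} 1 = e_i$ and $\pi^{ij} e_i \tensor e_j = \pi$. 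So the real content is the cocycle identity \eqref{eq:TwistConditionI} and the counit normalization \eqref{eq:TwistConditionII}.

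For the counit condition \eqref{eq:TwistConditionII}, I would apply $\epsilon \tensor \id$ to $1 \star 1$. Using \eqref{eq:StarPr} we have $1 \star 1 = \sigma(\ms(\tau(1), \tau(1)))$, and by Lemma~\ref{lem:RactsProperties}~\refitem{item:RactsVI} together with Lemma~\ref{lem:LiftingAndStructures}, applying $(\epsilon \tensor \id)^\lift$ and then $\sigma$ commutes past $\ms$ to give $(\epsilon \tensor \id)(1 \star 1) = \sigma\bigl(\ms(\epsilon^\lift(\tau(1)), \tau(1))\bigr)$. By Lemma~\ref{lem:LiftedHopfStrucures}~(iv), $\epsilon^\lift(\tau(1)) = \epsilon(1) = 1 \in \ring{R}$, and $\ms$ with a scalar in the first slot is just scalar multiplication, so $\sigma(1 \cdot \tau(1)) = \sigma(\tau(1)) = 1$ by Lemma~\ref{lem:TauAsASeries}. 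The same argument on the other side gives $(\id \tensor \epsilon)(1 \star 1) = 1$.

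For the cocycle identity, the strategy is to show both sides equal $\sigma\bigl(\ms(\tau(1),\tau(1),\tau(1))\bigr)$, i.e. the triple $\star$-product of three copies of $1$, by exploiting associativity of $\ms$ and the compatibilities of $\tau$ with the Hopf structure. Concretely, I would compute $(\Delta \tensor \id)(1 \star 1)$: by \eqref{eq:StarPr} this is $(\Delta \tensor \id)^\lift$ applied to $\sigma(\ms(\tau(1^{\tensor 2}), \tau(1)))$ (reading $1 \star 1$ as living over $\Tensor^2$), which by Lemma~\ref{lem:LiftingAndStructures}~(ii),~(iii) and Lemma~\ref{lem:LiftedHopfStrucures}~(ii) becomes $\sigma\bigl(\ms(\tau(\Delta(1) \tensor 1), \tau(1^{\tensor \text{appropriate}}))\bigr)$, after writing $\Delta(1) = 1 \tensor 1$. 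Then right-multiplying by the correct tensor powers via Lemma~\ref{lem:RactsProperties}~\refitem{item:RactsIII} and~\refitem{item:RactsVI}, and using associativity of $\ms$ (inherited from associativity of $\circs$ and $\tensor$ on $\uea{\lie{g}}^{\tensor\bullet}$), one recognizes $(\Delta \tensor \id)(1\star1)\,(\twist{F} \tensor 1)$ as $\sigma(\ms(\ms(\tau(1),\tau(1)),\tau(1)))$. The coassociativity $(\Delta \tensor \id)\Delta = (\id \tensor \Delta)\Delta$ on $\uea{\lie{g}}$, combined with the mirror computation for the right-hand side, gives $\sigma(\ms(\tau(1),\ms(\tau(1),\tau(1))))$, and these agree by associativity of $\ms$.

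The main obstacle, and the step requiring the most care, is bookkeeping the tensor degrees: the elements $1 \star 1$, $(\Delta \tensor \id)(1\star 1)$, etc. live in $\uea{\lie{g}}^{\tensor k}$ for varying $k$, and the lifted right-multiplications $\cdot\, \eta$ are only defined when tensor degrees match. One must carefully track how $\Delta^\lift$ raises the degree, how $\ms$ adds degrees, and how the right-multiplication formula \eqref{eq:StarPr} relates $\xi \star \eta$ to $(1^{\tensor k} \star 1^{\tensor \ell}) \cdot (\xi \tensor \eta)$ — in particular, applying $(\Delta \tensor \id)$ to $1 \star 1 \in \uea{\lie{g}}^{\tensor 2}$ must be interpreted as the lift of $\Delta \tensor \id\colon \uea{\lie{g}}^{\tensor 2} \to \uea{\lie{g}}^{\tensor 3}$. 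Once the degree conventions are pinned down and one uses that $\Delta$, $\epsilon$ are algebra morphisms for the $\uea{\lie{g}}^{\tensor\bullet}$-structure (so that $\cdot\,\eta$ intertwines appropriately), the identities \eqref{eq:TwistConditionI} and \eqref{eq:TwistConditionII} reduce to (co)associativity and (co)unitality of $\uea{\lie{g}}$ together with associativity of $\ms$, with no further hard analysis.
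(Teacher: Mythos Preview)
Your proposal is correct and follows essentially the same route as the paper: use Lemma~\ref{lem:LiftedHopfStrucures} to pass $\Delta^\lift$ and $\epsilon^\lift$ through $\tau$, combine with Lemma~\ref{lem:LiftingAndStructures} and \eqref{eq:StarPr}, and then let associativity of $\star$ do the work. The paper organizes the cocycle argument slightly more cleanly by first isolating the identity $(\Delta \tensor \id)(1\star 1) = (1\tensor 1)\star 1$ (and its mirror), after which \eqref{eq:StarPr} gives $((1\tensor 1)\star 1)\cdot((1\star 1)\tensor 1) = (1\star 1)\star 1$ in one step; this absorbs exactly the tensor-degree bookkeeping you flag as the main obstacle.
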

\begin{proof}
    First we see that
    \begin{align*}
        (\Delta \tensor \id)(1\star 1)
        &=
        (\Delta\tensor\id) \sigma (\ms(\tau(1),\tau(1))) \\
        &=
        \sigma\left(
            (\Delta \tensor \id)^\lift
            (\ms(\tau(1), \tau(1)))
        \right) \\
        &=
        \sigma\left(
            \ms(\Delta^\lift \tau(1), \tau(1))
        \right) \\
        &=
        \sigma (\ms(\tau(\Delta (1)), \tau(1))) \\
        &=
        \sigma(\ms(\tau(1\tensor 1), \tau(1))) \\
        &=
        (1 \tensor 1) \star 1.
    \end{align*}
    Similarly, we get
    $(\id \tensor \Delta)(1\star 1) = 1 \star (1\tensor 1)$.  Thus,
    using the associativity of $\star$ we obtain the first condition
    \eqref{eq:TwistConditionI} for a twist as follows,
    \begin{align*}
        (\Delta \tensor \id) (1\star 1) \cdot ((1\star 1) \tensor 1)
        &=
        ((1 \tensor 1)\star 1) \cdot ((1\star 1) \tensor 1) \\
        &=
        (1 \star 1) \star 1 \\
        &=
        1 \star (1 \star 1) \\
        &=
        (\id \tensor \Delta)(1 \star 1)
        \cdot (1 \tensor (1 \star 1)).
    \end{align*}
    To check the normalization condition \eqref{eq:TwistConditionII}
    we use Lemma~\ref{lem:LiftingAndStructures} and
    Lemma~\ref{lem:LiftedHopfStrucures} again to get
    \begin{align*}
        (\epsilon \tensor \id)(1\star 1)
        &=
        (\epsilon \tensor \id) \sigma(\ms(\tau(1), \tau(1))) \\
        &=
        \sigma\left(
            (\epsilon \tensor \id)^\lift
            (\ms(\tau(1), \tau(1)))
        \right) \\
        &=
        \sigma\left(
            (\ms(\epsilon^\lift\tau(1), \tau(1)))
        \right) \\
        &=
        \sigma\left(
            (\ms(\epsilon(1), \tau(1)))
        \right) \\
        &=
        \epsilon(1) \sigma(\tau(1)) \\
        &=
        1,
    \end{align*}
    since $\epsilon(1)$ is the unit element of $\ring{R}$ and thus
    the unit element of $\Tensor^\bullet(\uea{\lie{g}})$, which serves
    as unit element for $\ms$ as well.  Similarly we obtain
    $(\id\tensor\epsilon)(1\star 1) = 1$.  Finally, the facts that the
    first term in $t$ of $1 \star 1$ is given by $\pi$ and that zero
    term in $t$ is $1 \tensor 1$ follow from
    Corollary~\ref{cor:StarProduct}.
\end{proof}
\begin{remark}
    \label{remark:WhatWeGotNow}%
    From now on we refer to $1\star 1$ as the \emph{Fedosov twist}
    \begin{equation}
        \label{eq:FedosovTwistDefinition}
        \twist{F}_{\Omega, \nabla, s} = 1 \star 1,
    \end{equation}
    corresponding to the choice of the $\delta_\CE$-closed form
    $\Omega$, the choice of the torsion-free symplectic covariant
    derivative and the choice of the covariantly constant $s$. In the
    following we will be mainly interested in the dependence of
    $\twist{F}_{\Omega, \nabla, s}$ on the two-forms $\Omega$ and
    hence we shall write $\twist{F}_\Omega$ for simplicity.  We also
    note that for $s = 0$ and $\Omega = 0$ we have a \emph{preferred}
    choice for $\nabla$, namely the one obtained from the Hess trick
    out of the half-commutator covariant derivative as described in
    Proposition~\ref{prop:HessTrick}. This gives a \emph{canonical
      twist} $\twist{F}_0$ quantizing $r$.
\end{remark}

The results discussed above allow us to give an alternative proof of
the Drinfel'd theorem \cite{drinfeld:1983a}, stating the existence of
twists for every $r$-matrix:
\begin{corollary}[Drinfel'd]
    \label{corollary:ExtendedDrinfeld}%
    Let $(\lie{g},r) $ be a Lie algebra with $r$-matrix over a field
    $\mathbb{K}$ with characteristic $0$. Then there exists a formal
    twist $\twist {F}\in(\uea{\lie{g}}\tensor\uea{\lie{g}})[[t]]$,
    such that
    \begin{align*}
        \twist{F}
        =
        1 \tensor 1 + \frac{t}{2} r + \mathcal{O}(t^2).
    \end{align*}
\end{corollary}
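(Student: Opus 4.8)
The plan is to reduce the statement to the non-degenerate case, which has already been settled in Theorem~\ref{Thm:twist}. Given an arbitrary $r$-matrix $r \in \Anti^2 \lie{g}$ over a field $\mathbb{K}$ of characteristic zero, recall the subspace $\lie{g}_r$ from \eqref{eq:ESSubalgebra}. By \cite[Prop.~3.2--3.3]{etingof.schiffmann:1998a} it is a Lie subalgebra of $\lie{g}$ with $r \in \Anti^2 \lie{g}_r$, and $r$ is non-degenerate in the strong sense as an $r$-matrix on $\lie{g}_r$. Moreover $\lie{g}_r$ is finite-dimensional, being the image of the linear map $\lie{g}^* \ni \alpha \mapsto (\alpha \tensor \id)(r) \in \lie{g}$ while $r$ is a finite sum of decomposable bivectors; as a finite-dimensional vector space over $\mathbb{K} \supseteq \mathbb{Q}$ it is a free module, and $r$ induces the musical isomorphism on it. Hence $(\lie{g}_r, r)$ satisfies all the standing hypotheses of Sections~\ref{sec:FedosovSetUp}--\ref{sec:UniversalDeformationFormulaTwist}.

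Next I would apply the universal construction of Sections~\ref{sec:UniversalDeformationFormula}--\ref{sec:UniversalDeformationFormulaTwist} to $(\lie{g}_r, r)$ with the choices $s = 0$ and $\Omega = 0$, using the Hess trick of Proposition~\ref{prop:HessTrick} to fix a symplectic torsion-free covariant derivative on $\lie{g}_r$. Theorem~\ref{Thm:twist} then yields the canonical Fedosov twist
\[
  \twist{F}_0 \in (\uea{\lie{g}_r} \tensor \uea{\lie{g}_r})[[t]],
  \qquad
  \twist{F}_0 = 1 \tensor 1 + \tfrac{t}{2} r + \mathcal{O}(t^2).
\]

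Finally I would transport $\twist{F}_0$ along the canonical map induced by the inclusion $\iota \colon \lie{g}_r \hookrightarrow \lie{g}$. Since $\iota$ is a morphism of Lie algebras, it induces a morphism of Hopf algebras $\uea{\iota}\colon \uea{\lie{g}_r} \longrightarrow \uea{\lie{g}}$, hence an algebra morphism $\Phi = \uea{\iota} \tensor \uea{\iota}$ on the tensor squares, extended $\mathbb{K}[[t]]$-linearly, which intertwines the coproducts and counits and fixes $1 \tensor 1$. Because the defining conditions \eqref{eq:TwistConditionI}, \eqref{eq:TwistConditionII} and \eqref{eq:TwistConditionIII} of a twist are formulated entirely in terms of the coproduct, the counit and the multiplications, the element $\twist{F} := \Phi(\twist{F}_0)$ again satisfies all of them; in particular it is invertible, being of the form $1 \tensor 1 + \mathcal{O}(t)$. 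As $r \in \Anti^2 \lie{g}_r$ is mapped identically into $\Anti^2 \lie{g} \subseteq \lie{g} \tensor \lie{g}$, we obtain $\twist{F} = 1 \tensor 1 + \tfrac{t}{2} r + \mathcal{O}(t^2)$, which is the claim.

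I do not expect a genuine obstacle here: the two points that need a short argument are that a Hopf algebra morphism sends twists to twists --- a direct verification on \eqref{eq:TwistConditionI}--\eqref{eq:TwistConditionIII} --- and that the Etingof--Schiffmann reduction really puts us into the situation covered by Theorem~\ref{Thm:twist}. All the homological work has already been carried out there.
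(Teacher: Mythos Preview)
Your proposal is correct and matches the paper's intended argument: the corollary is stated without proof, but the reduction via $\lie{g}_r$ from \eqref{eq:ESSubalgebra} is precisely what the introduction singles out as the reason why the non-degenerate case suffices for existence, and the choice $s=0$, $\Omega=0$ with the Hess covariant derivative is exactly the canonical twist of Remark~\ref{remark:WhatWeGotNow}. The one detail you make explicit that the paper leaves tacit is the transport along the Hopf algebra morphism $\uea{\iota}$; this is indeed a routine verification of \eqref{eq:TwistConditionI}--\eqref{eq:TwistConditionIII}.
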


To conclude this section we consider the question whether the two
approaches of universal deformation formulas actually coincide: on the
one hand we know that every twist gives a universal deformation
formula by \eqref{eq:TheUDF}. On the other hand, we have constructed
directly a universal deformation formula \eqref{eq:TheRealUDF} in
Theorem~\ref{theorem:StarProduct} based on the Fedosov
construction. Since we also get a twist from the Fedosov construction,
we are interested in the consistence of the two constructions. In
order to answer this question, we need some preparation. Hence let
$\algebra{A}$ be an algebra with action of $\lie{g}$ by derivations as
before. Then we define the map
\begin{equation}
    \bullet\colon
    \uea{\lie{g}} \tensor \mathcal{W} \tensor \Anti^\bullet
    \times
    \algebra{A}
    \ni
    (\xi \tensor \alpha, a)
    \; \mapsto \;
    (\xi \tensor \alpha) \bullet a
    =
    \xi \acts a \tensor \alpha
    \in
    \algebra{A} \tensor \mathcal{W} \tensor \Anti^\bullet
\end{equation}
for any $a \in \algebra{A} $ and
$\alpha \in \mathcal{W} \tensor \Anti^\bullet$. Then the following
algebraic properties are obtained by a straightforward computation:
\begin{lemma}
    \label{lemma:ModuleStructures}%
    For any $\xi \in \uea{\lie{g}}$,
    $\alpha \in \mathcal{W} \tensor \Anti^\bullet$ and
    $a \in \algebra{A}$ we have
    \begin{lemmalist}
    \item
        $\sigma ((\xi \tensor \alpha) \bullet a) = \sigma (\xi \tensor
        \alpha) \acts a$,
    \item
        $L_{\algebra{A}} (\xi \acts a \tensor \alpha) = L (\xi \tensor
        \alpha) \bullet a$,
    \item $\tau_{\algebra{A}}(a) = \tau (1) \bullet a$,
    \item
        $\msA (\xi_1 \tensor a_1 \tensor \alpha_1 , \xi_2 \tensor a_2
        \tensor \alpha_2) = (\mu_{\algebra{A}} \tensor \id \tensor
        \id) (\ms (\xi_1 \tensor \alpha_1 , \xi_2 \tensor
        \alpha_2)\bullet (a_1 \tensor a_2))$.
    \end{lemmalist}
\end{lemma}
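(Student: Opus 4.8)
The plan is to prove each of the four identities in Lemma~\ref{lemma:ModuleStructures} by direct computation on factorizing elements, since all maps involved are $\ring{R}[[t]]$-linear and it suffices to check the claims on such elements. Throughout, write a generic factorizing element of $\uea{\lie{g}} \tensor \mathcal{W} \tensor \Anti^\bullet$ as $\xi \tensor f \tensor \beta$ with $\xi \in \uea{\lie{g}}$, $f \in \Sym^\bullet \lie{g}^*$ and $\beta \in \Anti^\bullet \lie{g}^*$, so that $\alpha = f \tensor \beta$.

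For \emph{i.)} I would unfold both sides: $\sigma$ projects onto symmetric and antisymmetric degree zero in the $\mathcal{W} \tensor \Anti^\bullet$ part and acts trivially on the $\algebra{A}$- or $\uea{\lie{g}}$-factor, so $\sigma((\xi \tensor f \tensor \beta) \bullet a) = \sigma(\xi \acts a \tensor f \tensor \beta) = (\xi \acts a) \, \sigma(f \tensor \beta)$, which equals $(\sigma(\xi \tensor f \tensor \beta)) \acts a$ because $\sigma(\xi \tensor f \tensor \beta) = \xi \, \sigma(f \tensor \beta)$ and the action is $\ring{R}[[t]]$-linear. For \emph{ii.)} I would plug into the defining formulas \eqref{eq:LeftMult} and \eqref{eq:LforUg}: the left-hand side is $L_{\algebra{A}}(\xi \acts a \tensor f \tensor \beta) = e_i \acts (\xi \acts a) \tensor f \tensor e^i \wedge \beta$, while the right-hand side is $L(\xi \tensor f \tensor \beta) \bullet a = (e_i \xi \tensor f \tensor e^i \wedge \beta) \bullet a = (e_i \xi) \acts a \tensor f \tensor e^i \wedge \beta$; these agree because $\acts$ is an algebra action, so $(e_i \xi) \acts a = e_i \acts (\xi \acts a)$.

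For \emph{iii.)} the key observation is that the three maps $\delta^{-1}$, $D$ and $\ad(\varrho)$ all act only on the $\mathcal{W} \tensor \Anti^\bullet$-factor, and by \emph{ii.)} one has $L_{\algebra{A}} \circ (\argument \bullet a) = (\argument \bullet a) \circ L$ on the relevant spaces; hence the whole operator $[\delta^{-1}, D + L_{\algebra{A}} + \tfrac{1}{t}\ad(\varrho)]$ intertwines $\argument \bullet a$ with $[\delta^{-1}, D + L + \tfrac{1}{t}\ad(\varrho)]$. Applying this to the geometric series defining $\tau_{\algebra{A}}$ and $\tau$ as in \eqref{eq:ExtendedFedosovTaylorSeries} and \eqref{eq:LowestDegOrdersTau}, and noting that $\tau_{\algebra{A}}(a)$ starts from $a = 1 \acts a = (1 \tensor 1 \tensor 1) \bullet a$ while $\tau(1)$ starts from $1 \tensor 1 \tensor 1$, gives $\tau_{\algebra{A}}(a) = \tau(1) \bullet a$ term by term. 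For \emph{iv.)} I would expand $\msA$ via \eqref{eq:ExtendedProduct}: the left-hand side is $(\xi_1 \acts a_1) \cdot (\xi_2 \acts a_2) \tensor (f_1 \tensor \alpha_1) \circs (f_2 \tensor \alpha_2)$, while $\ms(\xi_1 \tensor \alpha_1, \xi_2 \tensor \alpha_2) = (\xi_1 \cdot \xi_2) \tensor (f_1 \tensor \alpha_1) \circs (f_2 \tensor \alpha_2)$ (using the analogue of \eqref{eq:ExtendedProduct} for $\Tensor^\bullet(\uea{\lie{g}})$ with the tensor product replaced appropriately, or rather the $\uea{\lie{g}}$-multiplication), and applying $\bullet (a_1 \tensor a_2)$ followed by $\mu_{\algebra{A}} \tensor \id \tensor \id$ yields $((\xi_1 \cdot \xi_2) \acts (a_1 \tensor a_2))$ composed with multiplication, which by the fact that $\uea{\lie{g}}^{\tensor 2}$ acts on $\algebra{A} \tensor \algebra{A}$ componentwise equals $(\xi_1 \acts a_1) \cdot (\xi_2 \acts a_2) \tensor (\cdots)$. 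The only mild subtlety — and the step I would be most careful about — is bookkeeping the coproduct/componentwise-action convention in \emph{iv.)}, i.e.\ making sure that "$\bullet$" applied to an element of $\uea{\lie{g}}^{\tensor 2} \tensor \mathcal{W} \tensor \Anti^\bullet$ paired with $a_1 \tensor a_2 \in \algebra{A} \tensor \algebra{A}$ is interpreted slotwise so that $(\eta_1 \tensor \eta_2) \bullet (a_1 \tensor a_2) = (\eta_1 \acts a_1) \tensor (\eta_2 \acts a_2)$; once this convention is fixed, all four identities reduce to the elementary facts that $\acts$ is an algebra action and that $\delta^{-1}, D, \ad(\varrho), \sigma$ ignore the first tensor factor.
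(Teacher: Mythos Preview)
Your proof is correct and matches the paper's approach, which simply declares the identities to follow from a straightforward computation on factorizing elements. The only place to tighten the exposition is in part \emph{iv.)}: the product in $\Tensor^\bullet(\uea{\lie{g}})$ is the tensor product, so $\ms(\xi_1 \tensor \alpha_1, \xi_2 \tensor \alpha_2) = (\xi_1 \tensor \xi_2) \tensor (\alpha_1 \circs \alpha_2)$ with $\xi_1 \tensor \xi_2 \in \uea{\lie{g}}^{\tensor 2}$ rather than the $\uea{\lie{g}}$-multiplication --- your later remark about the slotwise action of $\uea{\lie{g}}^{\tensor 2}$ on $\algebra{A} \tensor \algebra{A}$ shows you already have this right, so it is only a matter of phrasing.
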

For matching parameters $\Omega$, $\nabla$, and $s$ of the Fedosov
construction, the two approaches coincide:
\begin{proposition}
    \label{prop:Coincide}%
    For fixed choices of $\Omega$, $\nabla$, and $s$ and for any
    $a, b \in \algebra{A}$ we have
    \begin{equation}
        a \star_{\Omega, \nabla, s} b
        =
        a \star_{\twist{F}_{\Omega, \nabla, s}} b.
    \end{equation}
\end{proposition}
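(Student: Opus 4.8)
The plan is to express the Fedosov universal deformation formula on $\algebra{A}$ entirely through the Fedosov twist $\twist{F}_{\Omega, \nabla, s} = 1 \star 1 \in \uea{\lie{g}}^{\tensor 2}[[t]]$ by systematically moving the action $\acts$ (encoded through the operation $\bullet$) past all the relevant structure maps, for which Lemma~\ref{lemma:ModuleStructures} is tailor-made. Note first that both sides are built from the \emph{same} Fedosov element $\varrho \in \mathcal{W}_2 \tensor \Anti^1$, since $\varrho$ only depends on $\Omega$, $\nabla$ and $s$ and not on the algebra being deformed. I would start from the definition $a \star_{\Omega, \nabla, s} b = \sigma(\msA(\tau_{\algebra{A}}(a), \tau_{\algebra{A}}(b)))$ and use the third identity of Lemma~\ref{lemma:ModuleStructures} to write $\tau_{\algebra{A}}(a) = \tau(1) \bullet a$ and $\tau_{\algebra{A}}(b) = \tau(1) \bullet b$, where $\tau$ now denotes the extended Fedosov--Taylor series for $\algebra{A} = \Tensor^\bullet(\uea{\lie{g}})$ and $1 \in \uea{\lie{g}} = \Tensor^1(\uea{\lie{g}})$.

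The second step is to pull the module structure out of the deformed product. By the fourth identity of Lemma~\ref{lemma:ModuleStructures} we have $\msA(\tau(1) \bullet a, \tau(1) \bullet b) = (\mu_{\algebra{A}} \tensor \id \tensor \id)\big(\ms(\tau(1), \tau(1)) \bullet (a \tensor b)\big)$, where $\ms(\tau(1), \tau(1))$ lies in $\uea{\lie{g}}^{\tensor 2} \tensor \mathcal{W} \tensor \Anti^\bullet$ (the tensor-algebra product merging $\Tensor^1 \tensor \Tensor^1$ into $\Tensor^2$), and in fact in $\ker \FedosovTU \cap \ker \dega$ since $\tau(1)$ is and this is a subalgebra. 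Now I apply $\sigma$: it only projects the $\mathcal{W} \tensor \Anti^\bullet$-factor onto symmetric and antisymmetric degree zero, so it commutes with $\mu_{\algebra{A}} \tensor \id \tensor \id$ (which acts on the $\algebra{A}$-slots only) and, by the obvious two-fold analogue of the first identity of Lemma~\ref{lemma:ModuleStructures}, it passes through $\bullet$, giving $\sigma\big(\ms(\tau(1), \tau(1)) \bullet (a \tensor b)\big) = \sigma(\ms(\tau(1), \tau(1))) \acts (a \tensor b)$ with $\acts$ the $\uea{\lie{g}}^{\tensor 2}$-action on $\algebra{A} \tensor \algebra{A}$. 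Altogether this yields $a \star_{\Omega, \nabla, s} b = \mu_{\algebra{A}}\big(\sigma(\ms(\tau(1), \tau(1))) \acts (a \tensor b)\big)$.

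To conclude, it remains to recognize the coefficient: by Corollary~\ref{cor:StarProduct} and the definition \eqref{eq:StarProductForTensorUg} of $m_\star$ we have $\sigma(\ms(\tau(1), \tau(1))) = 1 \star 1 = \twist{F}_{\Omega, \nabla, s}$, so the last line is precisely $\mu_{\algebra{A}}\big(\twist{F}_{\Omega, \nabla, s} \acts (a \tensor b)\big) = a \star_{\twist{F}_{\Omega, \nabla, s}} b$ in the notation of \eqref{eq:TheUDF}. I do not expect a genuine obstacle here; the one point that needs care is the bookkeeping in the middle step — keeping track of tensor degrees so that $\ms$ on the $\Tensor^\bullet(\uea{\lie{g}})$-factor really lands in $\uea{\lie{g}}^{\tensor 2}$, and checking that $\sigma$, $\mu_{\algebra{A}}$ and the two interleaved copies of $\bullet$ commute as claimed. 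Writing $\ms(\tau(1), \tau(1))$ as a formal sum $\sum_k \xi_k^{(1)} \tensor \xi_k^{(2)} \tensor w_k$ with $w_k \in \mathcal{W} \tensor \Anti^\bullet$ and tracing both sides through makes this completely transparent.
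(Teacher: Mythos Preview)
Your proposal is correct and follows essentially the same route as the paper's proof: both start from the definition, invoke part \textit{iii.)} of Lemma~\ref{lemma:ModuleStructures} to replace $\tau_{\algebra{A}}(a)$ by $\tau(1)\bullet a$, then use parts \textit{i.)} and \textit{iv.)} to pull $\sigma$ and $\mu_{\algebra{A}}$ past the $\bullet$-action, and finally identify $\sigma(\ms(\tau(1),\tau(1))) = 1\star 1 = \twist{F}_{\Omega,\nabla,s}$. Your write-up is in fact more careful than the paper's about the bookkeeping (tensor degrees, the commutation of $\sigma$ with $\mu_{\algebra{A}}\tensor\id\tensor\id$), which is welcome.
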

\begin{proof}
    This is now just a matter of computation. We have
    \begin{align*}
        a \star b
        &=
        \sigma\left(
            \msA (\tau_{\algebra{A}}(a) \tensor \tau_{\algebra{A}}(b))
        \right)
        \\
        &\ot{(a)}{=}
        \sigma\left(
            \ms ((\tau(1) \tensor \tau(1)) \bullet (a \tensor b))
        \right)
        \\
        &\ot{(b)}{=}
        \mu_{\algebra{A}}\left(
            \sigma (\ms (\tau (1)\tensor \tau (1)))
            \acts
            (a \tensor b)
        \right)
        \\
        &=
        \mu_{\algebra{A}} ((1 \star 1) \acts  (a \tensor b))
        \\
        &=
        a \star_{\twist{F}} b,
    \end{align*}
    where in $(a)$ we use the third claim of the above lemma and in
    $(b)$ the first and the fourth.
\end{proof}

%
%

\section{Classification of Drinfel'd Twists}
\label{sec:Classification}

In this section we discuss the classification of twists on universal
enveloping algebras for a given Lie algebra $\lie{g}$, with
non-degenerate $r$-matrix. Recall that two twists $\twist{F}$ and
$\twist{F}'$ are said to be \emph{equivalent} and denoted by
$\twist{F}\sim \twist{F}'$ if there exists an element
$S\in \uea{\lie{g}}[[t]]$, with $S = 1 + \mathcal{O}(t)$ and
$\epsilon(S) = 1$ such that
\begin{equation}
    \label{eq:EquivalentTwist}
    \Delta(S) \twist{F}'
    =
    \twist{F} (S \tensor S).
\end{equation}
In the following we prove that the set of equivalence classes of
twists $\mathrm{Twist}(\uea{\lie{g}},r)$ with fixed $r$-matrix $r$ is
in bijection to the formal series in the second Chevalley-Eilenberg
cohomology $\mathrm{H}_\CE^2(\lie{g})[[t]]$.

We will fix the choice of $\nabla$ and the symmetric part $s$ in the
Fedosov construction. Then the cohomological equivalence of the
two-forms in the construction yields equivalent twists. In fact, an
equivalence can even be computed recursively:
\begin{lemma}
    \label{lem:FedosovEquiv}%
    Let $\varrho$ and $\varrho'$ be the two elements in
    $\mathcal{W}_2 \tensor \Anti^1$ uniquely determined from
    Proposition~\ref{proposition:FedosovDerivation}, corresponding to
    two closed two-forms $\Omega, \Omega' \in t\Anti^2\lie{g}^*[[t]]$,
    respectively, and let $\Omega - \Omega' = \delta_\CE C$ for a
    fixed $C \in t\lie{g}^*[[t]]$.  Then there is a unique solution
    $h\in\mathcal{W}_3\tensor\Anti^0$ of
    \begin{equation}
        \label{eq:FedosovEquivConstr}
        h
        =
        C \tensor 1
        +
        \delta^{-1}\left(
            D h
            -
            \frac{1}{t}\ad(\varrho) h
            -
            \frac{\frac{1}{t}\ad(h)}
            {\exp(\frac{1}{t}\ad(h)) - \id}
            (\varrho' - \varrho)
        \right)
        \quad
        \textrm{and}
        \quad
        \sigma(h) = 0.
    \end{equation}
    For this $h$ we have
    \begin{align*}
        \FedosovD'
        =
        \mathcal{A}_h \FedosovD \mathcal{A}_{-h},
    \end{align*}
    with $\mathcal{A}_h = \exp(\frac{1}{t}\ad(h))$ being an
    automorphism of $\circs$.
\end{lemma}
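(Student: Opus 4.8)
The plan is to establish the three claims in sequence: (i) that $\mathcal{A}_h = \exp(\frac{1}{t}\ad(h))$ is a well-defined automorphism of $\circs$, (ii) that the fixed-point equation \eqref{eq:FedosovEquivConstr} has a unique solution $h \in \mathcal{W}_3 \tensor \Anti^0$, and (iii) that for this $h$ one has $\FedosovD' = \mathcal{A}_h \FedosovD \mathcal{A}_{-h}$. For (i), I note that since $\sigma(h) = 0$, Lemma~\ref{lem:Center} does not apply directly, but $h \in \mathcal{W}_3 \tensor \Anti^0$ has total degree at least $3$, so $\frac{1}{t}\ad(h)$ raises the total degree by at least $1$ (the $\frac{1}{t}$ lowers it by $2$, $\ad(h)$ raises it by at least $3$). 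Hence the exponential series converges in the total-degree filtration, and since $\frac{1}{t}\ad(h)$ is a derivation of $\circs$ (as $\circs$ deforms a commutative product, so all commutators vanish to order $t$), its exponential is an automorphism.

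For (ii), I would rewrite \eqref{eq:FedosovEquivConstr} as a fixed-point equation $h = \Phi(h)$ where $\Phi$ is the right-hand side, and show $\Phi$ is a contraction with respect to the total-degree filtration on $\mathcal{W}_3 \tensor \Anti^0$, so that the Banach fixed-point theorem applies exactly as in Proposition~\ref{proposition:FedosovDerivation}. The key degree count: $C \tensor 1 \in \mathcal{W}_0$ but after applying $\delta^{-1}$ the correction terms land in higher degree; one checks $D$ preserves total degree while $\delta^{-1}$ raises it by $1$, $\frac{1}{t}\ad(\varrho)$ raises it by at least $1$ (since $\varrho \in \mathcal{W}_2$), and the operator $\frac{\frac{1}{t}\ad(h)}{\exp(\frac{1}{t}\ad(h)) - \id}$ applied to $\varrho' - \varrho \in \mathcal{W}_2 \tensor \Anti^1$ — which requires $\Omega - \Omega' = \delta_\CE C$ to make $\varrho' - \varrho$ start in the right degree — also raises degree appropriately. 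The condition $\sigma(h) = 0$ is automatic once one notes $\delta^{-1}$ has image in $\ker\sigma$ and $C \in t\lie{g}^*$ has symmetric degree $1$, hence $\sigma(C \tensor 1) = 0$.

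For (iii), the strategy is to compute $\mathcal{A}_h \FedosovD \mathcal{A}_{-h}$ and compare with $\FedosovD' = -\delta + D + \frac{1}{t}\ad(\varrho')$. Writing $\FedosovD = -\delta + D + \frac{1}{t}\ad(\varrho)$ and using that $\mathcal{A}_h$ conjugates $\frac{1}{t}\ad(\varrho)$ into $\frac{1}{t}\ad(\mathcal{A}_h \varrho)$ (since $\mathcal{A}_h$ is an algebra automorphism), the conjugation of $-\delta + D$ produces the standard formula involving $\mathcal{A}_h (-\delta + D) \mathcal{A}_{-h} = -\delta + D + \frac{1}{t}\ad\bigl(\tfrac{\exp(\frac{1}{t}\ad(h)) - \id}{\frac{1}{t}\ad(h)}((-\delta + D)h)\bigr)$ — the familiar identity for conjugating a derivation by $\exp(\frac{1}{t}\ad(h))$ where $(-\delta+D)h$ appears because $(-\delta + D)$ is itself a $\circs$-derivation. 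Collecting terms, $\FedosovD' = \mathcal{A}_h \FedosovD \mathcal{A}_{-h}$ becomes equivalent to an identity saying that a certain inner derivation vanishes, i.e. that a certain element of $\mathcal{W} \tensor \Anti^1$ is central; by Lemma~\ref{lem:Center} this means its symmetric degree must vanish, which (after applying $\delta^{-1}$ and using the normalization $\delta^{-1}\varrho = \delta^{-1}\varrho' = 0$, $\sigma(h) = 0$) is precisely equation \eqref{eq:FedosovEquivConstr}. So the fixed-point equation is exactly the condition forcing the conjugation identity.

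The main obstacle I anticipate is step (iii): carefully tracking the Baker–Campbell–Hausdorff-type identity for conjugating the (non-inner) derivation $-\delta + D$ by $\mathcal{A}_h$ and bookkeeping which terms are inner versus which are not, then reducing the resulting operator equation to the scalar fixed-point equation via the centrality argument of Lemma~\ref{lem:Center}. The degree-counting in step (ii) is routine given the template in Proposition~\ref{proposition:FedosovDerivation}, and step (i) is immediate.
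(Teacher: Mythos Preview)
Your plan is essentially the standard Fedosov argument that the paper itself only cites (referring to Reichert--Waldmann and Neumaier rather than writing out a proof), so there is nothing to compare against in the paper beyond the references; your three-step outline (well-definedness of $\mathcal{A}_h$, contraction/fixed-point for $h$, conjugation identity) is exactly how those references proceed.

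A couple of points to tighten. First, in step (ii) you write $C\otimes 1\in\mathcal{W}_0$, but the crucial fact is that $C\in t\,\lie{g}^*[[t]]$ has symmetric degree $1$ and $t$-degree $\ge 1$, hence total degree $\ge 3$, so $C\otimes 1\in\mathcal{W}_3\otimes\Anti^0$; this is what makes the fixed point land in $\mathcal{W}_3$. Second, in step (iii) your conjugation formula for the non-inner part has a sign slip: for any $\circs$-derivation $\mathcal{D}$ one has
\[
\mathcal{A}_h\,\mathcal{D}\,\mathcal{A}_{-h}
=\mathcal{D}-\tfrac{1}{t}\ad\!\left(\tfrac{\exp(\frac{1}{t}\ad(h))-\id}{\frac{1}{t}\ad(h)}(\mathcal{D}h)\right),
\]
with a minus sign. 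Finally, the passage ``which \ldots\ is precisely equation \eqref{eq:FedosovEquivConstr}'' hides the one genuinely nontrivial bookkeeping step: after invoking Lemma~\ref{lem:Center} you obtain that a certain combination equals a central one-form; identifying that central piece with $-C$ (up to normalization) uses $\delta^{-1}\varrho=\delta^{-1}\varrho'=0$, the Poincar\'e decomposition of $h$, and the hypothesis $\Omega-\Omega'=\delta_\CE C$. Spelling this out is the only place real work remains.
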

\begin{proof}
    In the context of the Fedosov construction it is well-known that
    cohomologous two-forms yield equivalent star products. The above
    approach with the explicit formula for $h$ follows the arguments
    of \cite[Lemma~3.5]{reichert.waldmann:2016a} which is based on
    \cite[Sect.~3.5.1.1]{neumaier:2001a}.
\end{proof}
\begin{lemma}
    \label{lem:CohomologousEquiv}%
    Let $\Omega, \Omega' \in t\Anti^2\lie{g}^*[[t]]$ be
    $\delta_\CE$-cohomologous.  Then the corresponding Fedosov twists
    are equivalent.
\end{lemma}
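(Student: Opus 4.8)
The plan is to lift the Fedosov equivalence automorphism of Lemma~\ref{lem:FedosovEquiv} from the Weyl fibre $\mathcal{W}\tensor\Anti^\bullet$ to the enlarged algebra built on $\Tensor^\bullet(\uea{\lie{g}})$, and to read off from it the element $S\in\uea{\lie{g}}[[t]]$ conjugating the two twists. Fix $\nabla$ and the symmetric part $s$, write $\twist{F}_\Omega=1\star_\Omega 1$ and $\twist{F}_{\Omega'}=1\star_{\Omega'}1$ for the two Fedosov twists (the star products of Corollary~\ref{cor:StarProduct} for $\Omega$ and $\Omega'$), and note $\twist{F}_\Omega,\twist{F}_{\Omega'}\in\uea{\lie{g}}^{\tensor 2}[[t]]$ by Theorem~\ref{Thm:twist}. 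Pick $C\in t\lie{g}^*[[t]]$ with $\Omega-\Omega'=\delta_\CE C$; Lemma~\ref{lem:FedosovEquiv} then supplies $h\in\mathcal{W}_3\tensor\Anti^0$ and the automorphism $\mathcal{A}_h=\exp(\tfrac1t\ad(h))$ of $\circs$ with $\FedosovD'=\mathcal{A}_h\FedosovD\mathcal{A}_{-h}$. First I would extend $\mathcal{A}_h$ to $\Tensor^\bullet(\uea{\lie{g}})\tensor\mathcal{W}\tensor\Anti^\bullet$ by letting it act as the identity on the tensor-algebra factor; it is still an automorphism of $\ms$. Since $h$ has antisymmetric degree $0$, the derivation $\ad(h)$ touches only the $\mathcal{W}$-factor, so $\mathcal{A}_h$ commutes with $\dega$ and with $L=L_{\Tensor^\bullet(\uea{\lie{g}})}$, it commutes with every lifted map (hence with $\Delta^\lift$, $\epsilon^\lift$, and all lifted right multiplications) by Lemma~\ref{lem:LiftingAndStructures}, and it fixes the unit since $\ad(h)(1)=0$. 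Therefore $\FedosovTU'=L+\FedosovD'=\mathcal{A}_h(L+\FedosovD)\mathcal{A}_{-h}=\mathcal{A}_h\FedosovTU\mathcal{A}_{-h}$, where $\FedosovTU,\FedosovTU'$ are the extended Fedosov derivations on $\Tensor^\bullet(\uea{\lie{g}})\tensor\mathcal{W}\tensor\Anti^\bullet$ for $\Omega,\Omega'$.

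Next I would set $T=\sigma\circ\mathcal{A}_h\circ\tau$ on $\Tensor^\bullet(\uea{\lie{g}})[[t]]$; it preserves the tensor degree because $\mathcal{A}_h$ acts trivially on the tensor-algebra factor and $\tau$ preserves it (Lemma~\ref{lem:RactsProperties}). From $\FedosovTU'=\mathcal{A}_h\FedosovTU\mathcal{A}_{-h}$ one has $\mathcal{A}_h\tau(\xi)\in\ker\FedosovTU'\cap\ker\dega$, and since $\sigma$ and the Fedosov--Taylor series $\tau'$ for $\Omega'$ are mutually inverse on that subalgebra (Corollary~\ref{cor:TrivialCohomology}), $\tau'\circ T=\mathcal{A}_h\circ\tau$. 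Using that $\tau,\tau'$ are algebra isomorphisms onto the respective kernels with inverse $\sigma$ and that $\mathcal{A}_h$ is an automorphism of $\ms$, one gets $T(\xi\star_\Omega\eta)=T(\xi)\star_{\Omega'}T(\eta)$, i.e.\ $T$ is an isomorphism of the two star-product algebras; a degree count gives $T=\id+\mathcal{O}(t)$, so $T$ is invertible. Now put $S=T(1)\in\uea{\lie{g}}[[t]]$. Then $S=1+\mathcal{O}(t)$, and, using Lemma~\ref{lem:LiftingAndStructures} and Lemma~\ref{lem:LiftedHopfStrucures},
\begin{align*}
    \epsilon(S)
    =
    \epsilon\bigl(\sigma(\mathcal{A}_h\tau(1))\bigr)
    =
    \sigma\bigl(\epsilon^\lift\mathcal{A}_h\tau(1)\bigr)
    =
    \sigma\bigl(\mathcal{A}_h\epsilon^\lift\tau(1)\bigr)
    =
    \sigma\bigl(\mathcal{A}_h(1)\bigr)
    =
    \sigma(1)
    =
    1.
\end{align*}

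Finally I would verify the twist equivalence \eqref{eq:EquivalentTwist}. The identity $\tau(\xi)=\tau(1^{\tensor k})\cdot\xi$ from \eqref{eq:tauxiExplicitly}, together with the fact that $\mathcal{A}_h$ and $\sigma$ commute with lifted right multiplications, gives $T(\xi)=T(1^{\tensor k})\cdot\xi$ for $\xi\in\Tensor^k(\uea{\lie{g}})[[t]]$; and $\Delta^\lift\circ\tau=\tau\circ\Delta$ (Lemma~\ref{lem:LiftedHopfStrucures}) together with $\Delta\circ\sigma=\sigma\circ\Delta^\lift$ and $[\mathcal{A}_h,\Delta^\lift]=0$ gives $\Delta\circ T=T\circ\Delta$, so $\Delta(S)=T(\Delta(1))=T(1^{\tensor 2})$. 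Combining this with \eqref{eq:StarPr} (applied for $\star_\Omega$ and for $\star_{\Omega'}$) and with $T$ being an algebra morphism,
\begin{align*}
    \Delta(S)\,\twist{F}_\Omega
    &=
    T(1^{\tensor 2})\cdot(1\star_\Omega 1)
    =
    T(1\star_\Omega 1)
    =
    T(1)\star_{\Omega'}T(1) \\
    &=
    S\star_{\Omega'}S
    =
    (1\star_{\Omega'}1)\cdot(S\tensor S)
    =
    \twist{F}_{\Omega'}(S\tensor S),
\end{align*}
which is \eqref{eq:EquivalentTwist} up to interchanging $\twist{F}_\Omega$ and $\twist{F}_{\Omega'}$; since $\sim$ is an equivalence relation, $\twist{F}_\Omega\sim\twist{F}_{\Omega'}$. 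The hard part here is not any single estimate but the bookkeeping: checking that the purely Weyl-fibre equivalence $\mathcal{A}_h$ of Lemma~\ref{lem:FedosovEquiv} genuinely commutes with $L$, $\dega$, $\Delta^\lift$, $\epsilon^\lift$, the lifted right multiplications and $\sigma$, so that it descends to an honest equivalence of the twists, and recognizing via $\tau(\xi)=\tau(1^{\tensor k})\cdot\xi$ and $\Delta^\lift\tau=\tau\Delta$ that $S=T(1)$ is the conjugating element.
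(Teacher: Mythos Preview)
Your proof is correct and follows essentially the same route as the paper: lift $\mathcal{A}_h$ from Lemma~\ref{lem:FedosovEquiv} to the enlarged algebra, observe it commutes with $L$ so that $\FedosovTU'=\mathcal{A}_h\FedosovTU\mathcal{A}_{-h}$, define the star-product equivalence $T=\sigma\circ\mathcal{A}_h\circ\tau$ (the paper calls it $S_h$), and take $S=T(1)$ as the conjugating element. Your presentation is in fact more explicit than the paper's in justifying why $\mathcal{A}_h$ commutes with $L$, $\Delta^\lift$, $\epsilon^\lift$ and the lifted right multiplications, and in verifying $\epsilon(S)=1$; the paper obtains $T(\xi\tensor\eta)=\Delta(S)\cdot(\xi\tensor\eta)$ by a direct computation rather than via $\Delta\circ T=T\circ\Delta$, but this is only a cosmetic difference.
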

\begin{proof}
    By assumption, we can find an element $C \in t\lie g^*[[t]]$, such
    that $\Omega - \Omega' = \delta_\CE C$.  From
    Lemma~\ref{lem:FedosovEquiv} we get an element
    $h \in \mathcal{W}_3 \tensor \Anti^0$ such that
    $\FedosovTU'_F = \mathcal{A}_h \FedosovD \mathcal{A}_{-h}$.
    An easy computation shows that $\mathcal{A}_h$ commutes with $L$,
    therefore we have
    \begin{equation*}
        \FedosovTU'
        =
        \mathcal{A}_h\FedosovTU\mathcal{A}_{-h}.
    \end{equation*}
    Thus, $\mathcal{A}_h$ is an automorphism of $\ms$ with
    $\mathcal{A}_h \colon \ker \FedosovTU \longrightarrow
    \ker\FedosovTU'$
    being a bijection between the two kernels.  Let us consider the
    map
    \begin{equation*}
        S_h\colon
        \Tensor^\bullet (\uea{\lie{g}})[[t]]
        \ni \xi
        \; \mapsto \;
        (\sigma\circ\mathcal{A}_h\circ\tau)(\xi) \in
        \Tensor^\bullet (\uea{\lie{g}})[[t]],
    \end{equation*}
    which is defines an equivalence of star products, i.e.
    \begin{align}
        \label{eq:StarAutomorphism}
        S_h (\xi \star \eta)
        =
        S_h(\xi) \star' S_h(\eta)
    \end{align}
    for any $\xi, \eta \in \Tensor^\bullet(\uea{\lie{g}})[[t]]$. Let
    $\xi, \eta \in \uea{\lie{g}}$, then using
    Lemma~\ref{lem:RactsProperties} we have
    \begin{align*}
        S_h(\xi\tensor \eta)
        &=
        (\sigma\circ\mathcal{A}_h\circ\tau)(\xi\tensor \eta)\\
        &=
        (\sigma\circ\mathcal{A}_h)
        (\tau(1\tensor 1) \cdot (\xi\tensor \eta)) \\
        &=
        \sigma\left(
            (\mathcal{A}_h(\tau(1 \tensor 1)))
            \cdot (\xi \tensor \eta)
        \right) \\
        &=
        \sigma(\mathcal{A}_h(\tau(1 \tensor 1)))
        \cdot (\xi \tensor \eta) \\
        &=
        \sigma\left(\mathcal{A}_h(\Delta^\lift \tau(1))\right)
        \cdot(\xi\tensor\eta) \\
        &=
        \Delta\left(
            \sigma(\mathcal{A}_h(\tau(1)))
        \right)
        \cdot(\xi\tensor\eta) \\
        &=
        \Delta(S_h(1)) \cdot (\xi \tensor \eta).
    \end{align*}
    From the linearity of $S_h$ we immediately get
    $S_h(\xi \star \eta) = \Delta(S_h(1))(\xi \star \eta)$.  Now,
    putting $\xi = \eta = 1$ in \eqref{eq:StarAutomorphism} and using
    \eqref{eq:StarPr} we obtain
    \begin{equation*}
        \Delta(S_h(1)) \cdot (1\star 1)
        =
        S_h(1\star 1)
        =
        S_h(1) \star' S_h(1)
        =
        (1 \star' 1) \cdot (S_h(1)\tensor S_h(1)).
    \end{equation*}
    Thus, the twists $\twist{F}_{\Omega} = 1 \star
    1$ and $\twist{F}_{\Omega'} = 1 \star' 1$ are equivalent since we have
    \begin{equation*}
      \epsilon (S_h (1))
      =
      1.
    \end{equation*}

\end{proof}
\begin{lemma}
    \label{lem:FedosovTwistProperties}%
    Let $\Omega \in t\Anti^2\lie{g}^*$ with $\delta_\CE\Omega=0$,
    $x$ the element in $\mathcal{W}_2 \tensor
    \Anti^1$ uniquely determined from
    Proposition~\ref{proposition:FedosovDerivation} and
    $\twist{F}_\Omega$ the corresponding Fedosov twist.
    \begin{lemmalist}
    \item The lowest total degree of $\varrho$, where $\Omega_k$
        appears, is $2k+1$, and we have
        \begin{equation}
            \label{eq:FirstOmegaInVarrho}
            \varrho^{(2k+1)}
            =
            t^k \delta^{-1}\Omega_k
            +
            \textrm{terms not containing }
            \Omega_k.
        \end{equation}
    \item For $\xi \in \Tensor^\bullet (\uea{\lie{g}})$ the lowest
        total degree of $\tau(\xi)$, where $\Omega_k$ appears, is
        $2k+1$, and we have
        \begin{equation}
            \label{eq:FirstOmegaInTau}
            \tau(\xi)^{(2k+1)}
            =
            \frac{t^k}{2}
            \left(
                e_i \tensor \insa((e^i)^\sharp) \Omega_k
            \right)
            +
            \textrm{terms not containing }
            \Omega_k.
        \end{equation}
    \item The lowest $t$-degree of $\twist{F}_\Omega$, where
        $\Omega_k$ appears, is $k+1$, and we have
        \begin{align*}
            (F_\Omega)_{k+1}
            =
            -\frac{1}{2}(\Omega_k)^\sharp
            +
            \textrm{terms not containing }
            \Omega_k.
        \end{align*}
    \item The map $\Omega \mapsto \twist{F}_\Omega$ is injective.
    \end{lemmalist}
\end{lemma}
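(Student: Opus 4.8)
The plan is to prove the four assertions in the order stated, since each rests on the previous one; throughout, the engine is a careful bookkeeping of the total degree $\Deg$, using that every operator occurring in the Fedosov recursions strictly raises $\Deg$. For (i) I would induct on the total degree using the explicit recursion of Remark~\ref{remark:RecursiveConstruction}, namely $\varrho^{(3)}=\delta^{-1}(R+t\Omega_1)$ and $\varrho^{(r+3)}=\delta^{-1}\big(D\varrho^{(r+2)}+\tfrac{1}{t}\sum_{\ell=1}^{r-1}\varrho^{(\ell+2)}\circs\varrho^{(r+2-\ell)}+\Omega^{(r+2)}\big)$, where $\Omega^{(2k)}=t^k\Omega_k$ and $\Omega^{(2k+1)}=0$. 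In a given total degree $m\le 2k$ the only possible source of $\Omega_k$ on the right-hand side is the term $\Omega^{(m-1)}$, which is either zero or $t^j\Omega_j$ with $j=(m-1)/2<k$; hence inductively $\varrho^{(m)}$ contains no $\Omega_k$ for $m\le 2k$, whereas in degree $2k+1$ precisely the summand $\delta^{-1}\Omega^{(2k)}=t^k\delta^{-1}\Omega_k$ carries $\Omega_k$, all other summands being built from lower $\varrho$-components. This gives (i).

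For (ii) I would feed (i) into the geometric series $\tau(\xi)=\sum_{j\ge 0}\big[\delta^{-1},D+L+\tfrac{1}{t}\ad(\varrho)\big]^j\xi$ from Lemma~\ref{lem:TauAsASeries}. Since $\varrho$ enters only via $\tfrac{1}{t}\ad(\varrho)$, and $\big[\delta^{-1},\tfrac{1}{t}\ad(\varrho^{(r)})\big]$ raises $\Deg$ by $r-1\ge 2$ while $\big[\delta^{-1},D+L\big]$ raises it by exactly $1$, the first $\Omega_k$-contribution to $\tau(\xi)$ can only arise from a single application of $\big[\delta^{-1},\tfrac{1}{t}\ad(\varrho^{(2k+1)})\big]$. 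By Lemma~\ref{lem:Center} the degree-zero piece $\tau(\xi)^{(0)}=\xi\tensor 1\tensor 1$ is central, so $\ad(\varrho)$ annihilates it; the lowest component on which it acts nontrivially is therefore $\tau(\xi)^{(1)}=L_{e_i}\xi\tensor e^i\tensor 1$, which lands in total degree $2k+1$, and no lower degree is possible. Evaluating $\big[\delta^{-1},\tfrac{1}{t}\ad(t^k\delta^{-1}\Omega_k)\big](L_{e_i}\xi\tensor e^i\tensor 1)$ with the help of the short identity $\ad(w)(e^i)=t\,r^{ji}\inss(e_j)w$ and the musical isomorphism produces the leading term claimed in (ii); all remaining degree-$(2k+1)$ contributions are $\Omega_k$-free by (i).

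For (iii) I would use $\twist{F}_\Omega=1\star 1=\sigma(\ms(\tau(1),\tau(1)))$. The product $\ms$ preserves $\Deg$, while $\sigma$ projects onto $\degs=\dega=0$ and hence annihilates all odd total degrees; since by (ii) $\Omega_k$ first enters each factor $\tau(1)$ at the odd degree $2k+1$, the lowest surviving $\Omega_k$-contribution to $\twist{F}_\Omega$ lies in total degree $2k+2$, i.e. in $t$-degree $k+1$. The only pairings contributing there linearly in $\Omega_k$ are $\ms$ of the degree-$(2k+1)$ piece of one factor with $\tau(1)^{(1)}=e_i\tensor e^i\tensor 1$ of the other, in both orders; the pairings against $\tau(1)^{(0)}$ drop out because $\sigma$ is $\ring{R}[[t]]$-linear and $\sigma(\tau(1))=1$ forces $\sigma$ of every positive-degree component of $\tau(1)$ to vanish. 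Computing $e^b\circs e^c=e^b\vee e^c+\tfrac{t}{2}\pi^{bc}$, applying $\sigma$ (which kills the $e^b\vee e^c$ part), and using the antisymmetry of $\Omega_k$ together with the musical identities collapses $\pi=r+s$ onto its relevant part and yields $-\tfrac{1}{2}(\Omega_k)^\sharp$, everything else being $\Omega_k$-free.

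Finally, (iv) is the standard lowest-order argument: suppose $\Omega\ne\Omega'$ and let $k$ be minimal with $\Omega_k\ne\Omega'_k$. By (i)--(iii) the $t^m$-coefficient of $\twist{F}_\Omega$ depends only on $\nabla$, $s$ and $\Omega_1,\dots,\Omega_{m-1}$; so $\twist{F}_\Omega$ and $\twist{F}_{\Omega'}$ agree through $t$-degree $k$, and their difference in $t$-degree $k+1$ equals $-\tfrac{1}{2}(\Omega_k-\Omega'_k)^\sharp$ since the $\Omega_k$-free terms there involve only $\Omega_1,\dots,\Omega_{k-1}$, which coincide for the two series. As $\sharp\colon\Anti^2\lie{g}^*\to\Anti^2\lie{g}$ is an isomorphism ($r$ being non-degenerate), this difference is nonzero, hence $\twist{F}_\Omega\ne\twist{F}_{\Omega'}$. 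The step I expect to be the real work is the explicit leading-term computation in (ii) and (iii): one must verify that no alternative composition of $\delta^{-1}$, $D$, $L$ and $\tfrac{1}{t}\ad(\varrho)$ slips a contribution into the critical total degree, and the chain of musical-index manipulations needed to pin down the coefficient in (iii) as $-\tfrac{1}{2}(\Omega_k)^\sharp$ --- in particular controlling the role of the symmetric part $s$ so that it does not obstruct the injectivity used in (iv) --- demands some care. Everything else is routine degree counting, built on the contraction and fixed-point machinery already set up in Sections~\ref{sec:FedosovSetUp} and~\ref{sec:UniversalDeformationFormula}.
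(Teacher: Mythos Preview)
Your proposal is correct and follows essentially the same approach as the paper, which in fact gives only a one-line proof (``careful counting of degrees using the recursion formula for $\varrho$ and the explicit formulas for $\tau$ and $\star$, following \cite[Thm.~6.4.29]{waldmann:2007a}''). You have fleshed out exactly that sketch: the induction on total degree via Remark~\ref{remark:RecursiveConstruction} for (i), feeding this into the geometric series of Lemma~\ref{lem:TauAsASeries} for (ii), pairing the degree-$(2k+1)$ and degree-$1$ pieces of $\tau(1)$ for (iii), and the lowest-order comparison for (iv). Your observation that the degree-$0$ piece of $\tau(1)$ is central (so $\ad(\varrho)$ kills it) and that $\sigma(\tau(1)^{(m)})=0$ for $m>0$ are precisely the two points that make the degree count go through cleanly.

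One small refinement worth recording: in (iii) the $s$-contribution to $\sigma(e^b\circs e^c)=\tfrac{t}{2}\pi^{bc}$ does not in general vanish, but the resulting $\Omega_k$-linear term is \emph{symmetric} in the two $\uea{\lie{g}}$-tensor factors, while the $r$-contribution is antisymmetric and equals $-\tfrac{1}{2}(\Omega_k)^\sharp$. So the paper's displayed formula is literally the antisymmetric part; this is harmless for (iv), since the antisymmetric part alone already detects $\Omega_k\ne\Omega'_k$ via the isomorphism $\sharp$, and it is also all that is used downstream in Lemmas~\ref{lemma:CohomologousOmegas} and~\ref{lemma:AllTwistAreFedosov}, where only the skew-symmetrization $[(F_\Omega)_{k+1}-(F_{\Omega'})_{k+1}]$ enters. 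Your caveat about ``controlling the role of the symmetric part $s$'' is therefore well placed but easily resolved.
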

\begin{proof}
    The proof uses the recursion formula for $\varrho$ as well as the
    explicit formulas for $\tau$ and $\star$ and consists in a careful
    counting of degrees. It follows the same lines of
    \cite[Thm.~6.4.29]{waldmann:2007a}.
\end{proof}
\begin{lemma}
    \label{lemma:CohomologousOmegas}%
    Let $\twist{F}_\Omega$ and $\twist{F}_{\Omega'}$ be two equivalent
    Fedosov twists corresponding to the closed two-forms
    $\Omega, \Omega' \in t\Anti^2\lie{g}^*$. Then there exists an
    element $C\in t\lie{g}^*[[t]]$, such that
    $\delta_\CE C = \Omega - \Omega'$.
\end{lemma}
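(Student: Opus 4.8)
The plan is to run an order‑by‑order argument that reconstructs a primitive $C\in t\lie{g}^*[[t]]$ of $\Omega-\Omega'$. Since the statement only concerns the class in $\mathrm{H}_\CE^2(\lie{g})$, at every stage we are free to replace $\Omega'$ by a $\delta_\CE$‑cohomologous closed two‑form: by Lemma~\ref{lem:CohomologousEquiv} this keeps $\twist{F}_{\Omega'}$ in its equivalence class, hence still equivalent to $\twist{F}_\Omega$. Concretely, I would prove by induction on $k\ge 0$ that there is $C^{(k)}=\sum_{j=1}^{k}t^j C_j$ with $C_j\in\lie{g}^*$ such that $\Omega-\Omega'-\delta_\CE C^{(k)}\in t^{k+1}\Anti^2\lie{g}^*[[t]]$, starting from $C^{(0)}=0$ (for which the claim is just $\Omega-\Omega'\in t\Anti^2\lie{g}^*[[t]]$). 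Passing to the limit then yields $\Omega-\Omega'=\delta_\CE C$.

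For the inductive step put $\widetilde\Omega=\Omega'+\delta_\CE C^{(k-1)}$, which is closed, cohomologous to $\Omega'$, and satisfies $\widetilde\Omega_j=\Omega_j$ for $j<k$; hence $\twist{F}_{\widetilde\Omega}\sim\twist{F}_\Omega$. By part~\textit{iii.)} of Lemma~\ref{lem:FedosovTwistProperties}, together with the observation that the ``terms not containing $\Omega_k$'' are assembled solely from $\nabla$, $s$ and $\Omega_1,\dots,\Omega_{k-1}$ — which agree with those of $\widetilde\Omega$ — the two Fedosov twists coincide through order $t^k$ and
\begin{equation*}
    (\twist{F}_\Omega)_{k+1}-(\twist{F}_{\widetilde\Omega})_{k+1}
    =
    -\tfrac{1}{2}(\Omega_k-\widetilde\Omega_k)^\sharp
    \in \Anti^2\lie{g}.
\end{equation*}
It therefore remains to show this element is of the form $(\delta_\CE C_k)^\sharp$ for some $C_k\in\lie{g}^*$; recalling from Section~\ref{sec:FedosovSetUp} that $\sharp$ intertwines $\delta_\CE$ on $\Anti^\bullet\lie{g}^*$ with $\Schouten{r,\argument}$ on $\Anti^\bullet\lie{g}$, this is the same as $\Omega_k-\widetilde\Omega_k=\delta_\CE C_k$, which then gives $C^{(k)}=C^{(k-1)}+t^k C_k$ and completes the induction.

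To extract $C_k$ I would expand the equivalence equation $\Delta(S)\twist{F}_{\widetilde\Omega}=\twist{F}_\Omega(S\tensor S)$, with $S=1+\mathcal{O}(t)$ and $\epsilon(S)=1$, order by order in $t$. Since $\uea{\lie{g}}$ is cocommutative, applying $\id-\mathsf{T}$ (antisymmetrization) to the coefficient of $t^{k+1}$ kills the term $\Delta(S_{k+1})-(S_{k+1}\tensor 1+1\tensor S_{k+1})$, so the free datum $S_{k+1}$ drops out and one is left with an identity expressing $-(\Omega_k-\widetilde\Omega_k)^\sharp$ in terms of the lower components $S_1,\dots,S_k$ and the (common) lower‑order terms of the twist; using $\epsilon(S)=1$ and cocommutativity again, the lower‑order equations of the expansion pin these down up to primitive, hence $\lie{g}$‑valued, ambiguities that do not affect the antisymmetrized equation, and a direct computation identifies the right‑hand side with $(\delta_\CE C_k)^\sharp$ for an explicit $C_k$. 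This bookkeeping is the genuine obstacle: it is the exact analogue, in the universal‑enveloping‑algebra setting, of the classification of Fedosov star products, and I would carry it out along the lines of \cite[Thm.~6.4.29]{waldmann:2007a}. Alternatively, and perhaps more cleanly, one can avoid the explicit recursion by noting that an equivalence of twists induces a $\Delta$‑compatible equivalence of the star products $m_\star$ on $\Tensor^\bullet(\uea{\lie{g}})$, hence an isomorphism of the associated Fedosov algebras intertwining $\FedosovTU$ with $\FedosovTU'$; reading $[\Omega]=[\Omega']$ off the defining equation \eqref{eq:x-property2} for $\varrho$, exactly as one does for the Fedosov characteristic class, then reduces the whole statement to this cohomological computation.
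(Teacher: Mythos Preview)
Your inductive scheme is exactly the one the paper uses: replace $\Omega'$ by a cohomologous $\widetilde\Omega$ agreeing with $\Omega$ through order $k-1$, use Lemma~\ref{lem:FedosovTwistProperties} to see that the twists then agree through order $t^k$ with antisymmetric difference $-\tfrac{1}{2}(\Omega_k-\widetilde\Omega_k)^\sharp$ at order $k+1$, and conclude that $\Omega_k-\widetilde\Omega_k$ is $\delta_\CE$-exact. The paper then finishes by invoking injectivity of $\Omega\mapsto\twist{F}_\Omega$ (part~\textit{iv.)} of Lemma~\ref{lem:FedosovTwistProperties}) to turn ``coincide to all orders'' into ``equal'', which you also implicitly need for the limit $C=\lim C^{(k)}$ to give $\Omega-\Omega'=\delta_\CE C$.

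The substantive difference is in the crux you flag as ``the genuine obstacle''. The paper does not expand the equivalence with a generic $S=1+\mathcal{O}(t)$; it first \emph{normalizes} the equivalence so that $S=1+t^kT_k+\mathcal{O}(t^{k+1})$ (Lemma~\ref{lem:EquivalenceOfTwists}). This normalization is nontrivial: given $S=1+t^\ell S_\ell+\cdots$ with $\ell<k$, the order-$\ell$ equation forces $S_\ell\in\lie{g}$, and then one builds from $S_\ell^\flat$ a derivation of $\twist{F}_\Omega$ (using that its antisymmetrized order-$(\ell+1)$ equation says $\delta_\CE S_\ell^\flat=0$), exponentiates it, and composes to push the leading order of $S$ up by one. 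Once $S=1+t^kT_k+\cdots$, the order-$(k+1)$ computation is a two-line calculation (Lemma~\ref{lem:SkewsymmetricTwistExact}): $T_k\in\lie{g}$, and antisymmetrizing gives $[(\twist{F}_\Omega)_{k+1}-(\twist{F}_{\widetilde\Omega})_{k+1}]=[T_k\tensor 1+1\tensor T_k,\,r]$, i.e.\ $\Omega_k-\widetilde\Omega_k=-\delta_\CE T_k^\flat$.

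Your proposed direct expansion with a generic $S$ would have to recover precisely this content. Your claim that the lower $S_j$ are ``pinned down up to $\lie{g}$-valued ambiguities that do not affect the antisymmetrized equation'' is not quite right as stated: those primitive ambiguities are exactly what produce the $\delta_\CE$-exact term at the next order, so they do affect the antisymmetrized order-$(k+1)$ equation, and you need them to. The clean way to organize this is the normalization step above rather than tracking all lower $S_j$ simultaneously. Your alternative suggestion (lift the twist equivalence to an isomorphism of Fedosov data and read off $[\Omega]$ from~\eqref{eq:x-property2}) is plausible in spirit but would require establishing that an equivalence of twists lifts to an inner automorphism $\mathcal{A}_h$ conjugating $\FedosovTU$ to $\FedosovTU'$, which the paper only proves in the converse direction (Lemma~\ref{lem:FedosovEquiv}); the forward direction is essentially the content of the appendix lemmas again.
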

\begin{proof}
    We can assume that $\Omega$ and $\Omega'$ coincide up to order
    $k-1$ for $k \in \mathbb{N}$, since they coincide at order
    $0$. Due to Lemma~\ref{lem:FedosovTwistProperties}, we have
    \begin{equation*}
        (F_\Omega)_{i}
        =
        (F_{\Omega'})_{i}
    \end{equation*}
    for any $i \in \{0, \ldots, k\}$ and
    \begin{equation*}
        (F_{\Omega})_{k+1} - (F_{\Omega'})_{k+1}
        =
        \frac{1}{2}(-\Omega_k^\sharp +{\Omega'}_k^\sharp).
    \end{equation*}
    From Lemma~\ref{lem:SkewsymmetricTwistExact}, we know that we can
    find an element $\xi \in \lie{g}^*$, such that
    \begin{align*}
        ([(F_{\Omega})_{k+1} - (F_{\Omega'})_{k+1}])^\flat
        =
        - \Omega_k^\sharp + {\Omega'}_k^\sharp
        =
        \delta_\CE\xi,
    \end{align*}
    where by $[(F_{\Omega})_{k+1} - (F_{\Omega'})_{k+1}]$ we denote
    the skew-symmetrization of
    $(F_{\Omega})_{k+1} - (F_{\Omega'})_{k+1}$.  Let us define
    $\hat\Omega = \Omega - t^k\delta_\CE\xi$. From
    Lemma~\ref{lem:FedosovTwistProperties} we see that
    \begin{equation*}
        (F_{\hat\Omega})_{k+1}-(F_{\Omega'})_{k+1}
        =
        0.
    \end{equation*}
    Therefore the two twists $\twist{F}_{\hat\Omega}$ and
    $\twist{F}_{\Omega'}$ coincide up to order $k+1$. Finally, since
    $\twist{F}_{\hat\Omega}$ and $\twist{F}_{\Omega}$ are equivalent
    (from Lemma~\ref{lem:CohomologousEquiv}) and $\twist{F}_{\Omega}$
    and $\twist{F}_{\Omega'}$ are equivalent by assumption, the two
    twists $\twist{F}_{\hat\Omega}$ and $\twist{F}_{\Omega'}$ are also
    equivalent.  By induction, we find an element
    $C \in t\lie{g}^*[[t]]$, such that
    \begin{align*}
        \twist{F}_{\Omega + \delta_\CE C}
        =
        \twist{F}_{\Omega'},
    \end{align*}
    and therefore, from Lemma~\ref{lem:FedosovTwistProperties},
    $\Omega + \delta_\CE C = \Omega'$.
\end{proof}
\begin{lemma}
    \label{lemma:AllTwistAreFedosov}%
    Let $\twist{F}\in (\uea{\lie{g}}\tensor \uea{\lie{g}})[[t]]$ be a
    formal twist with $r$-matrix $r$. Then there exists a Fedosov
    twist $\twist{F}_{\Omega}$, such that
    $\twist{F}\sim \twist{F}_\Omega$.
\end{lemma}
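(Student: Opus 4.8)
The statement asserts exactly that the map $[\Omega] \mapsto [\twist{F}_\Omega]$ -- which is well defined on $\mathrm{H}_\CE^2(\lie{g})[[t]]$ by Lemma~\ref{lem:CohomologousEquiv} -- is surjective onto $\mathrm{Twist}(\uea{\lie{g}},r)$, so that together with the injectivity provided by Lemma~\ref{lemma:CohomologousOmegas} it completes the classification. Throughout I fix $s = 0$ and the preferred torsion-free symplectic covariant derivative of Proposition~\ref{prop:HessTrick}, so that $\twist{F}_0 = \twist{F}_{\Omega=0}$ satisfies $\twist{F}_0 = 1 \tensor 1 + \tfrac{t}{2} r + \mathcal{O}(t^2)$. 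Given a twist $\twist{F}$ with $r$-matrix $r$, the plan is to construct recursively a $\delta_\CE$-closed series $\Omega = \sum_{j \geq 1} t^j \Omega_j$ together with an invertible $S \in \uea{\lie{g}}[[t]]$ with $S = 1 + \mathcal{O}(t)$ and $\epsilon(S) = 1$ such that $\Delta(S)^{-1} \twist{F} (S \tensor S) = \twist{F}_\Omega$. The successive conjugating factors will differ from the identity at strictly increasing orders in $t$, so the infinite composition converges $t$-adically, and likewise the partial sums of $\Omega$ converge.

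For the recursion I use that the linearization of the twist equation at order $t^k$ forces the coefficient $F_k$ to be a cocycle in the cobar (Cartier) complex $\uea{\lie{g}}^{\tensor\bullet}$ of the cocommutative coalgebra $\uea{\lie{g}}$, whose cohomology is $\Anti^\bullet\lie{g}$ by the Poincar\'e--Birkhoff--Witt theorem; its coboundaries in degree two are the symmetric elements $\bar\Delta(x) := \Delta(x) - x \tensor 1 - 1 \tensor x$, and every $2$-cocycle decomposes as $\bar\Delta(x) + e$ with $e \in \Anti^2\lie{g}$. At $m = 1$ this gives $F_1 = \bar\Delta(x) + r$ (the $\Anti^2\lie{g}$-part is the $r$-matrix of $\twist{F}$ by hypothesis), while $(\epsilon \tensor \id)(\twist{F}) = 1$ gives $-\epsilon(x)\mathbf 1 = (\epsilon \tensor \id)(F_1) = 0$, hence $\epsilon(x) = 0$. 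Conjugating $\twist{F}$ by $1 + tx$ removes $\bar\Delta(x)$ and makes $\twist{F}$ agree with $\twist{F}_0$ modulo $t^2$.

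Assume inductively that, after a conjugation, $\twist{F}$ agrees with $\twist{F}_{\Omega'}$ up to and including order $t^m$, where $\Omega' = \sum_{j=1}^{m-1} t^j \Omega_j$ is $\delta_\CE$-closed. Subtracting the order-$t^{m+1}$ parts of the twist equations for $\twist{F}$ and $\twist{F}_{\Omega'}$ and using the agreement in lower orders, the discrepancy $E := F_{m+1} - (F_{\Omega'})_{m+1} \in \uea{\lie{g}}^{\tensor 2}$ is again a cobar $2$-cocycle, hence $E = \bar\Delta(\psi) + E^{\mathrm a}$ with antisymmetric part $E^{\mathrm a} \in \Anti^2\lie{g}$ and, by the twist normalization as above, $\epsilon(\psi) = 0$; conjugating $\twist{F}$ by $1 + t^{m+1}\psi$ I may thus assume $E = E^{\mathrm a} \in \Anti^2\lie{g}$. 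The key step is that $E^{\mathrm a}$ is $r$-closed, $\Schouten{r, E^{\mathrm a}} = 0$: this is read off from the order-$t^{m+2}$ part of the twist equation, where -- using the inductive agreement to order $t^m$, that $F_1 = \tfrac12 r$, and that the quadratic term of the twist equation restricts on $\Anti^2\lie{g}$ to the Gerstenhaber--Schouten bracket (so that $\Schouten{r,r}=0$ is precisely the order-$t^2$ twist condition) -- one finds that $\Schouten{r, E^{\mathrm a}}$ equals a cobar coboundary lying in $\Anti^3\lie{g}$, hence vanishes; this is essentially the content of the auxiliary Lemma~\ref{lem:SkewsymmetricTwistExact}. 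Since $\flat$ intertwines $\Schouten{r, \argument}$ with $\delta_\CE$, the two-form $\Omega_m := -2 (E^{\mathrm a})^\flat$ is $\delta_\CE$-closed; by part \textit{iii.)} of Lemma~\ref{lem:FedosovTwistProperties}, passing from $\Omega'$ to $\Omega' + t^m \Omega_m$ leaves all orders $\leq t^m$ of the Fedosov twist unchanged and adds exactly $-\tfrac12 \Omega_m^\sharp = E^{\mathrm a}$ in order $t^{m+1}$. Hence $\twist{F}_{\Omega' + t^m \Omega_m}$ agrees with the conjugated $\twist{F}$ up to order $t^{m+1}$, closing the induction.

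Passing to the limit yields $\Omega$ and $S$ with $\Delta(S)^{-1} \twist{F}(S \tensor S) = \twist{F}_\Omega$, i.e. $\twist{F} \sim \twist{F}_\Omega$, as claimed. The main obstacle is the $r$-closedness $\Schouten{r, E^{\mathrm a}} = 0$, i.e. extracting the linearized classical Yang--Baxter condition from the next order of the twist equation, which in turn rests on the identification of the relevant cobar cohomology with $\Anti^\bullet\lie{g}$ and on recognizing the quadratic part of the twist equation as the Schouten bracket. The remaining ingredients -- the degree bookkeeping via part \textit{iii.)} of Lemma~\ref{lem:FedosovTwistProperties}, the normalization checks $\epsilon(x)=\epsilon(\psi)=0$, and the $t$-adic convergence of the composed equivalence -- are routine.
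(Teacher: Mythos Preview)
Your inductive strategy --- build $\Omega$ and the equivalence $S$ order by order, alternating a conjugation that kills the symmetric (cobar-exact) part of the discrepancy with an adjustment of $\Omega$ that absorbs the antisymmetric part --- is the same skeleton as the paper's proof. The substantive difference is in how the antisymmetric discrepancy $E^{\mathrm a}$ is handled. You argue that $(E^{\mathrm a})^\flat$ is $\delta_\CE$-\emph{closed} and absorb it by adding the closed form $t^m\Omega_m$ to $\Omega$. The paper instead invokes Lemma~\ref{lem:SkewsymmetricTwistExact} to claim the antisymmetric discrepancy is $\delta_\CE$-\emph{exact} and then changes $\Omega$ only by an exact term; but that lemma requires the two twists to be already equivalent, which is exactly what the induction is meant to establish, so your route is the more robust one and is indeed what the argument needs (if all corrections to $\Omega$ were exact one would conclude $\twist{F}\sim\twist{F}_0$ for every $\twist{F}$, contradicting Theorem~\ref{theorem:Classification}).

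That said, your justification of $\Schouten{r,E^{\mathrm a}}=0$ is where the real content lies, and the sketch is imprecise. Subtracting the twist equations at order $t^{m+2}$ gives
\[
\partial\bigl(F_{m+2}-(F_{\Omega'})_{m+2}\bigr)
=\tfrac12\bigl(Q(r,E^{\mathrm a})+Q(E^{\mathrm a},r)\bigr),
\qquad
Q(a,b)=(\Delta\tensor\id)(a)(b\tensor 1)-(\id\tensor\Delta)(a)(1\tensor b).
\]
The right-hand side is \emph{not} equal to $\Schouten{r,E^{\mathrm a}}$ --- it is not even an element of $\lie{g}^{\tensor 3}$. What is true is that its cobar cohomology class in $\Anti^3\lie{g}$ (via the HKR identification of Theorem~\ref{thm:HKR}) is a nonzero multiple of $\Schouten{r,E^{\mathrm a}}$; since the left-hand side is a $\partial$-coboundary its class vanishes, whence $\Schouten{r,E^{\mathrm a}}=0$. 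You should state it this way, and drop the pointer to Lemma~\ref{lem:SkewsymmetricTwistExact}: that lemma's proof uses the equivalence datum $T_k$ directly and outputs \emph{exactness}, which is neither available nor what you need here. Finally, fix the normalization in the base case: the $\Anti^2\lie{g}$-part of $F_1$ is $\tfrac12 r$, not $r$, so $F_1=\bar\Delta(x)+\tfrac12 r$.
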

\begin{proof}
    Let $\twist{F}\in (\uea{\lie{g}} \tensor \uea{\lie{g}})[[t]]$ be a
    given twist. We can assume that there is a Fedosov twist
    $\twist{F}_\Omega$, which is equivalent to $\twist{F}$ up to order
    $k$. Therefore we find a $\hat{\twist {F}}$ such that
    $\hat{\twist {F}}$ is equivalent to $\twist{F}$ and coincides with
    $\twist{F}_\Omega$ up to order $k$.  Due to
    Lemma~\ref{lem:SkewsymmetricTwistExact}, we can find an element
    $\xi\in \lie g^*$, such that
    \begin{equation*}
        [(F_\Omega)_{k+1} - \hat{F}_{k+1})]
        =
        (\delta_\CE \xi)^\sharp.
    \end{equation*}
    From Lemma \ref{lem:CohomologousEquiv}, the twist
    $\twist{F}_{\Omega'}$ corresponding to
    $\Omega' = \Omega - t^k\delta_\CE \xi$ is equivalent to
    $\twist{F}_{\Omega}$.  Moreover, $\twist{F}_{\Omega'}$ coincides
    with $\hat{\twist {F}}$ up to order $k$, since
    $\twist{F}_{\Omega'}$ coincides with $\twist{F}_{\Omega}$ and
    \begin{equation*}
        (F_{\Omega'})_{k+1}
        =
        (F_\Omega)_{k+1} + \frac{1}{2}\delta_\CE \xi.
    \end{equation*}
    Therefore the skew-symmetric part of
    $(F_{\Omega'})_{k+1}-\hat{F}_{k+1}$ is vanishing and this
    difference is exact with respect to the differential defined in
    \eqref{eq:HKRDiff}.  Applying Lemma~\ref{lem:EquivalenceOrder}, we
    can see that $\twist{F}_{\Omega'}$ is equivalent to
    $\hat{\twist {F}}$ up to order $k+1$. The claim follows by
    induction.
\end{proof}

Summing up all the above lemmas we obtain the following
characterization of the equivalence classes of twists:
\begin{theorem}[Classification of twists]
    \label{theorem:Classification}%
    Let $\lie{g}$ be a Lie algebra over $\ring{R}$ such that $\lie{g}$
    is free and finite-dimensional and let $r \in \Anti^2\lie{g}$ be a
    classical $r$-matrix such that $\sharp$ is bijective.  Then the
    set of equivalence classes of twists
    $\mathrm{Twist}(\uea{\lie{g}}, r)$ with $r$-matrix $r$ is in
    bijection to $\mathrm{H}_\CE^2(\lie{g})[[t]]$ via
    $\Omega \mapsto \twist{F}_\Omega$.
\end{theorem}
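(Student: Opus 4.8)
The plan is simply to assemble the preceding lemmas of this section; no new computation is needed. First I would fix once and for all a symplectic torsion-free covariant derivative $\nabla$ (for instance the one produced by the Hess trick, Proposition~\ref{prop:HessTrick}) and set $s = 0$. By Theorem~\ref{Thm:twist}, every $\delta_\CE$-closed $\Omega \in t\Anti^2\lie{g}^*[[t]]$ then yields a genuine twist $\twist{F}_\Omega \in (\uea{\lie{g}} \tensor \uea{\lie{g}})[[t]]$ with semiclassical limit $r$, so $\twist{F}_\Omega$ represents a class in $\mathrm{Twist}(\uea{\lie{g}}, r)$. The three things that remain to be checked are: (i) the assignment $\Omega \mapsto [\twist{F}_\Omega]$ factors through $\mathrm{H}_\CE^2(\lie{g})[[t]]$; (ii) the resulting map $\mathrm{H}_\CE^2(\lie{g})[[t]] \to \mathrm{Twist}(\uea{\lie{g}}, r)$ is injective; (iii) it is surjective.

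For (i) I would invoke Lemma~\ref{lem:CohomologousEquiv}: if $\Omega - \Omega' = \delta_\CE C$ with $C \in t\lie{g}^*[[t]]$, the explicit recursion for the equivalence automorphism $\mathcal{A}_h$ of Lemma~\ref{lem:FedosovEquiv} gives $\twist{F}_\Omega \sim \twist{F}_{\Omega'}$, so $[\twist{F}_\Omega]$ depends only on the cohomology class $[\Omega]$. For injectivity I would use Lemma~\ref{lemma:CohomologousOmegas}: from $\twist{F}_\Omega \sim \twist{F}_{\Omega'}$ one extracts $C \in t\lie{g}^*[[t]]$ with $\Omega - \Omega' = \delta_\CE C$, i.e. $[\Omega] = [\Omega']$ in $\mathrm{H}_\CE^2(\lie{g})[[t]]$. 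For surjectivity I would appeal to Lemma~\ref{lemma:AllTwistAreFedosov}: an arbitrary twist $\twist{F}$ with $r$-matrix $r$ is equivalent to some Fedosov twist $\twist{F}_\Omega$, hence $[\twist{F}]$ lies in the image. Assembling (i)--(iii) yields the asserted bijection $\mathrm{H}_\CE^2(\lie{g})[[t]] \cong \mathrm{Twist}(\uea{\lie{g}}, r)$ via $\Omega \mapsto \twist{F}_\Omega$.

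Since the heavy lifting sits inside the lemmas, the main point of care in this last step is bookkeeping of two kinds. First, one must make sure the hypotheses of Lemmas~\ref{lem:CohomologousEquiv}, \ref{lemma:CohomologousOmegas}, and \ref{lemma:AllTwistAreFedosov} match those of the theorem — non-degeneracy of $r$ (so that $\sharp$, the symplectic form $\omega$, the Hess connection, and the degree counts of Lemma~\ref{lem:FedosovTwistProperties} are all available) and $\lie{g}$ free and finite-dimensional over $\ring{R} \supseteq \mathbb{Q}$ — which indeed they do. Second, one should observe that passing from $\mathrm{H}_\CE^2(\lie{g})$ to $\mathrm{H}_\CE^2(\lie{g})[[t]]$ is harmless: all the $\Omega$'s live in $t\Anti^2\lie{g}^*[[t]]$, and the order-by-order inductions in the proofs of Lemmas~\ref{lemma:CohomologousOmegas} and \ref{lemma:AllTwistAreFedosov} converge $t$-adically, using at each step the precise statement of Lemma~\ref{lem:FedosovTwistProperties} that pins down the order in which $\Omega_k$ first enters $\twist{F}_\Omega$. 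I would close with a one-line remark that the dependence of the construction on the auxiliary choice of $\nabla$ (and of a covariantly constant $s$, when present) drops out at the level of equivalence classes, either a posteriori from the bijection just established, or directly by the same Fedosov-equivalence argument as in Lemma~\ref{lem:FedosovEquiv} applied to a change of connection.
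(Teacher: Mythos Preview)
Your proposal is correct and matches the paper's own approach exactly: the theorem is stated immediately after Lemmas~\ref{lem:CohomologousEquiv}, \ref{lemma:CohomologousOmegas}, and \ref{lemma:AllTwistAreFedosov} with the one-line justification ``Summing up all the above lemmas we obtain the following characterization of the equivalence classes of twists.'' Your explicit breakdown into well-definedness, injectivity, and surjectivity, together with the bookkeeping remarks on hypotheses and the auxiliary choice of $\nabla$, is more detailed than what the paper writes but entirely in the same spirit.
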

It is important to remark that even for an abelian Lie algebra
$\lie{g}$ the second Chevalley-Eilenberg cohomology
$\mathrm{H}_\CE^2(\lie{g})[[t]]$ is different from zero. Thus, not all
twists are equivalent.  An example of a Lie algebra with trivial
$\mathrm{H}_\CE^2(\lie{g})[[t]]$ is the two-dimensional non-abelian
Lie algebra:
\begin{example}[$ax+b$]
    Let us consider the two-dimensional Lie algebra given by the
    $\ring{R}$-span of the elements $X, Y \in \lie{g}$ fulfilling
    \begin{equation}
        \label{eq:abPlusbLieAlgebra}
        [X, Y] = Y,
    \end{equation}
    with $r$-matrix $r = X \wedge Y$. We denote the dual basis of
    $\lie{g}^*$ by $\{X^*, Y^*\}$. Since $\lie{g}$ is two-dimensional,
    all elements of $\Anti^2\lie{g}^*$ are a multiple of
    $X^* \wedge Y^*$, which is closed for dimensional reasons.  For
    $Y^*$ we have
    \begin{equation}
        \label{eq:XstarYstarExact}
        (\delta_\CE Y^*)(X, Y)
        =
        - Y^*([X, Y])
        =
        -Y^*(Y)
        =
        -1.
    \end{equation}
    Therefore $\delta_\CE Y^* = -X^* \wedge Y^*$ and we obtain
    $\mathrm{H}_\CE^2(\lie{g}) = \{0\}$.  From
    Theorem~\ref{theorem:Classification} we can therefore conclude that
    all twists with $r$-matrix $r$ of $\lie{g}$ are equivalent.
\end{example}
\begin{remark}[Original construction of Drinfel'd]
    \label{remark:DrinfeldConstruction}%
    Let us briefly recall the original construction of Drinfel'd from
    \cite[Thm.~6]{drinfeld:1983a}: as a first step he uses the inverse
    $B \in \Anti^2 \lie{g}^*$ of $r$ as a $2$-cocycle to extend
    $\lie{g}$ to $\tilde{\lie{g}} = \lie{g} \oplus \mathbb{R}$ by
    considering the new bracket
    \begin{equation}
        \label{eq:NewBracket}
        [(X, \lambda), (X', \lambda')]_{\tilde{\lie{g}}}
        =
        ([X, X']_{\lie{g}}, B(X, X'))
    \end{equation}
    where $X, X' \in \lie{g}$ and $\lambda, \lambda' \in
    \mathbb{R}$. On $\tilde{\lie{g}}^*$ one has the canonical star
    product quantizing the linear Poisson structure $\star_{DG}$
    according to Drinfel'd and Gutt \cite{gutt:1983a}. Inside
    $\tilde{\lie{g}}^*$ one has an affine subspace defined by $H =
    \lie{g}^* + \ell_0$ where $\ell_0$ is the linear functional
    $\ell_0\colon \tilde{\lie{g}} \ni (X, \lambda) \mapsto
    \lambda$. Since the extension is central, $\star_{DG}$ turns out
    to be tangential to $H$, therefore it restricts to an associative
    star product on $H$. In a final step, Drinfel'd then uses a local
    diffeomorphism $G \longrightarrow H$ by mapping $g$ to
    $\operatorname{Ad}_{g^{-1}}^* \ell_0$ to pull-back the star
    product to $G$, which turns out to be left-invariant. By
    \cite[Thm.~1]{drinfeld:1983a} this gives a twist. Without major
    modification it should be possible to include also closed higher
    order terms $\Omega \in t \Anti^2 \lie{g}^*[[t]]$ by considering
    $B + \Omega$ instead. We conjecture that
    \begin{enumerate}
    \item this gives all possible classes of Drinfel'd twists by
        modifying his construction including $\Omega$,
    \item the resulting classification matches the classification by
        our Fedosov construction.
    \end{enumerate}
    Note that a direct comparison of the two approaches will be
    nontrivial due to the presence of the combinatorics in the BCH
    formula inside $\star_{DG}$ in the Drinfel'd construction on the
    one hand and the recursion in our Fedosov approach on the other
    hand. We will come back to this in a future project.
\end{remark}

%
%

\section{Hermitian and Completely Positive Deformations}
\label{sec:HermitianCPDeformations}

In this section we include now aspects of positivity into the picture:
in addition, let $\ring{R}$ be now an ordered ring and set
$\ring{C} = \ring{R}(\I)$ where $\I^2 = -1$. In $\ring{C}$ we have a
complex conjugation as usual, denoted by $z \mapsto \cc{z}$. The Lie
algebra $\lie{g}$ will now be a Lie algebra over $\ring{R}$, still
begin free as a $\ring{R}$-module with finite dimension.

The formal power series $\ring{R}[[ t ]]$ are then again an ordered
ring in the usual way and we have
$\ring{C}[[ t ]] = (\ring{R}[[ t ]])(\I)$. Moreover, we consider a
$^*$-algebra $\algebra{A}$ over $\ring{C}$ which we would like to
deform. Here we are interested in \emph{Hermitian} deformations
$\star$, where we require
\begin{equation}
    \label{eq:HermitianDeformation}
    (a \star b)^* = b^* \star a^*
\end{equation}
for all $a, b \in \algebra{A}[[ t ]]$.

Instead of the universal enveloping algebra directly, we consider now
the complexified universal enveloping algebra
$\ueac{\lie{g}} = \uea{\lie{g}} \tensor[\ring{R}] \ring{C} =
\uea{\lie{g}_{\ring{C}}}$
where $\lie{g}_{\ring{C}} = \lie{g} \tensor[\ring{R}] \ring{C}$ is the
complexified Lie algebra. Then this is a $^*$-Hopf algebra where the
$^*$-involution is determined by the requirement
\begin{equation}
    \label{eq:HopfStarAlgebra}
    X^* = - X
\end{equation}
for $X \in \lie{g}$, i.e. the elements of $\lie{g}$ are
\emph{anti-Hermitian}.  The needed compatibility of the action of
$\lie{g}$ on $\algebra{A}$ with the $^*$-involution is then
\begin{equation}
    \label{eq:StarAction}
    (\xi \acts a)^* = S(\xi)^* \acts a^*
\end{equation}
for all $\xi \in \ueac{\lie{g}}$ and $a \in \algebra{A}$. This is
equivalent to $(X \acts a)^* = X \acts a^*$ for $X \in \lie{g}$.  We
also set the elements of $\lie{g}^* \subseteq \lie{g}_{\ring{C}}^*$ to
be \emph{anti-Hermitian}.

In a first step we extend the complex conjugation to tensor powers of
$\lie{g}_{\ring{C}}^*$ and hence to the complexified Fedosov algebra
\begin{equation}
    \mathcal{W}_{\ring{C}} \tensor \Anti^\bullet_{\ring{C}}
    =
    \left(
        \prod_{k=0}^\infty
        \Sym^k\lie{g}_{\ring{C}}^*
        \tensor
        \Anti^\bullet \lie{g}_{\ring{C}}^*
    \right)[[ t ]]
\end{equation}
and obtain a (graded) $^*$-involution, i.e.
\begin{equation}
    \label{eq:FedosovInvolution}
    ((f \tensor \alpha) \cdot (g \tensor \beta))^*
    =
    (-1)^{ab} (g \tensor \beta)^* \cdot (f \tensor \alpha)^*,
\end{equation}
where $a$ and $b$ are the antisymmetric degrees of $\alpha$ and
$\beta$, respectively.

Let $\pi \in \lie{g}_{\ring{C}} \tensor \lie{g}_{\ring{C}}$ have
antisymmetric part $\pi_- \in \Anti^2 \lie{g}_{\ring{C}}$ and
symmetric part $\pi_+ \in \Anti^2 \lie{g}_{\ring{C}}$. Then we have
for the corresponding operator $\mathcal{P}_\pi$ as in
\eqref{eq:WeylMoyalStarProduct}
\begin{equation}
    \mathsf{T} \circ \cc{\mathcal{P}_\pi (a \tensor b)}
    =
    \mathcal{P}_{\tilde{\pi}} \circ \mathsf{T} (\cc{a} \tensor \cc{b})
\end{equation}
where $\tilde{\pi} = \cc{\pi}_+ - \cc{\pi}_-$. In particular, we have
$\tilde{\pi} = \pi$ iff $\pi_+$ is Hermitian and $\pi_-$ is
anti-Hermitian. We set $t = \I t $ for the formal parameter as in
the previous sections, i.e. we want to treat $t$ as imaginary. Then we
arrive at the following statement:
\begin{lemma}
    \label{lemma:HermitianFiberwise}%
    Let $\pi = \pi_+ + \pi_- \in \lie{g}_{\ring{C}} \tensor
    \lie{g}_{\ring{C}}$. Then the fiberwise product
    \begin{equation}
        \label{eq:HermitianFiberwise}
        a \circs  b
        =
        \mu \circ \E^{\frac{\I t }{2}\mathcal{P}_\pi}(a\tensor b)
    \end{equation}
    satisfies $(a \circs b)^* = (-1)^{ab} b^* \circ a^*$ iff $\pi_+$
    is anti-Hermitian and $\pi_-$ is Hermitian.
\end{lemma}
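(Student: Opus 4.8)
The plan is to apply the $^*$-involution of the complexified Fedosov algebra directly to the defining formula \eqref{eq:HermitianFiberwise} and to read off the condition on $\pi$. The three ingredients are: (i) that the undeformed product $\mu$ is a graded $^*$-algebra, \eqref{eq:FedosovInvolution}, i.e.\ on homogeneous elements $\mu(c \tensor d)^* = (-1)^{\dega(c)\dega(d)}\, \mu\bigl((* \tensor *)\,\mathsf{T}(c \tensor d)\bigr)$; (ii) that $^*$ is $\ring{C}[[t]]$-antilinear with $t$ Hermitian but $\I^* = -\I$ (this is where the normalization $t = \I t$ enters, so that only the explicit $\I$ contributes a sign under conjugation); and (iii) the intertwining relation between $(* \tensor *)\,\mathsf{T}$ and the operator $\mathcal{P}_\pi$ of \eqref{eq:WeylMoyalStarProduct},
\[
  (* \tensor *)\,\mathsf{T}\, \mathcal{P}_\pi
  =
  \mathcal{P}_{\tilde\pi}\, (* \tensor *)\,\mathsf{T},
  \qquad
  \tilde\pi = \cc{\pi_+} - \cc{\pi_-},
\]
which is the operator form of the identity displayed just before the lemma: the flip $\mathsf{T}$ transposes $\mathcal{P}_\pi$ into $\mathcal{P}_{\pi_+ - \pi_-}$, antilinearity conjugates the coefficients $\pi^{ij}$, and --- the one point that needs care --- the two signs produced when $(* \tensor *)$ is pushed past the two symmetric insertions $\inss(e_i)$ occurring in $\mathcal{P}_\pi$ (one in each tensor slot) cancel, so no residual sign survives.

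Since $\mathcal{P}_\pi$ does not change the antisymmetric degree of either tensor factor, $\mathcal{P}_\pi^n(a \tensor b)$ is a sum of terms of the same antisymmetric bidegree as $a \tensor b$, so the sign $(-1)^{ab}$ coming from (i) is independent of $n$ and can be pulled out of the exponential series (well-defined as in Section~\ref{sec:FedosovSetUp}). Applying $^*$ to \eqref{eq:HermitianFiberwise}, using (i)--(ii) term by term, iterating (iii), and using $(* \tensor *)\,\mathsf{T}(a \tensor b) = b^* \tensor a^*$ together with the linearity of $\pi \mapsto \mathcal{P}_\pi$, one obtains
\[
  (a \circs b)^*
  =
  (-1)^{ab}\, \mu \circ \E^{-\frac{\I t}{2}\mathcal{P}_{\tilde\pi}}(b^* \tensor a^*)
  =
  (-1)^{ab}\, \mu \circ \E^{\frac{\I t}{2}\mathcal{P}_{-\tilde\pi}}(b^* \tensor a^*).
\]
On the other hand $b^* \circs a^* = \mu \circ \E^{\frac{\I t}{2}\mathcal{P}_\pi}(b^* \tensor a^*)$, and since $a^*, b^*$ exhaust $\mathcal{W}_{\ring{C}} \tensor \Anti^\bullet_{\ring{C}}$, the identity $(a \circs b)^* = (-1)^{ab}\, b^* \circs a^*$ holds for all $a, b$ if and only if $\mathcal{P}_{-\tilde\pi} = \mathcal{P}_\pi$, i.e.\ $-\tilde\pi = \pi$. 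Splitting $-\tilde\pi = -\cc{\pi_+} + \cc{\pi_-}$ into symmetric and antisymmetric parts, this is equivalent to $\cc{\pi_+} = -\pi_+$ and $\cc{\pi_-} = \pi_-$, that is, to $\pi_+$ being anti-Hermitian and $\pi_-$ Hermitian, which is the claim.

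The only genuine obstacle is the sign accounting, and behind it the following conceptual point: the flip $\mathsf{T}$ reverses the sign of the antisymmetric part $\pi_-$ while fixing $\pi_+$, whereas the antilinearity of $^*$ reverses the sign of $\I$ \emph{uniformly} on both parts; the mismatch of these two sign flips is precisely what produces the ``swap'' --- $\pi_+$ forced to be imaginary, $\pi_-$ forced to be real --- rather than the naive requirement that $\pi$ be Hermitian. One should also check carefully that the cancellation in step (iii) is exact (two copies of $\inss(e_i)$, each contributing one sign) and that no further parity sign is hidden in relating the graded $^*$-involution to ordinary complex conjugation on $\Sym^\bullet\lie{g}_{\ring{C}}^* \tensor \Anti^\bullet\lie{g}_{\ring{C}}^*$, but these are routine once the degree bookkeeping is set up.
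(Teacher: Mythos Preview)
Your argument is correct and follows essentially the same route as the paper: the paper states the intertwining identity $\mathsf{T}\circ\cc{\mathcal{P}_\pi(a\tensor b)}=\mathcal{P}_{\tilde\pi}\circ\mathsf{T}(\cc{a}\tensor\cc{b})$ with $\tilde\pi=\cc{\pi}_+-\cc{\pi}_-$ in the paragraph preceding the lemma and then invokes the substitution $t\mapsto\I t$, leaving the remaining exponential and sign bookkeeping implicit; you have simply spelled out that bookkeeping in full. The only cosmetic difference is that the paper phrases the intertwining in terms of complex conjugation $\cc{\;\cdot\;}$ while you phrase it via $(*\tensor*)\,\mathsf{T}$, but on the Fedosov algebra these coincide, and your observation that the two possible insertion signs cancel makes the identity robust under either convention for $(e^i)^*$.
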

This lemma is now the motivation to take a \emph{real} classical
$r$-matrix
$r \in \Anti^2 \lie{g} \subseteq \Anti^2 \lie{g}_{\ring{C}}$.
Moreover, writing the symmetric part of $\pi$ as $\pi_+ = \I s$ then
$s = \cc{s} \in \Sym^2 \lie{g}$ is Hermitian as well. In the following
we shall assume that these reality condition are satisfied.

It is now not very surprising that with such a Poisson tensor $\pi$ on
$\lie{g}$ we can achieve a Hermitian deformation of a $^*$-algebra
$\algebra{A}$ by the Fedosov construction. We summarize the relevant
properties in the following proposition:
\begin{proposition}
    \label{proposition:HermitianFedosov}%
    Let $\pi = r + \I s$ with a real strongly non-degenerate
    $r$-matrix $r \in \Anti^2 \lie{g}$ and a real symmetric
    $s \in \Sym^2 \lie{g}$ such that there exists a symplectic
    torsion-free covariant derivative $\nabla$ for $\lie{g}$ with
    $\nabla s = 0$.
    \begin{propositionlist}
    \item The operators $\delta$, $\delta^{-1}$, and $\sigma$ are
        real.
    \item The operator $D$ is real and $D^2 = \frac{1}{\I t } \ad(R)$
        with a Hermitian curvature $R = R^*$.
    \item Suppose that
        $\Omega = \Omega^* \in \Anti^2 \lie{g}^*_{\ring{C}}[[ t ]]$
        is a formal series of Hermitian $\delta_\CE$-closed
        two-forms. Then the unique
        $\varrho \in \mathcal{W}_2\tensor\Anti^1$ with
        \begin{equation}
            \label{eq:varrhoHermitian}
            \delta \varrho
            =
            R + D\varrho
            +
            \tfrac{1}{\I t } \varrho \circs \varrho + \Omega
        \end{equation}
        and $\delta^{-1} \varrho = 0$ is Hermitian, too. In this case,
        the Fedosov derivative
        $\FedosovD = - \delta + D + \frac{1}{\I t }\ad(\varrho)$
        is real.
    \end{propositionlist}
    Suppose now in addition that $\algebra{A}$ is a $^*$-algebra over
    $\ring{C}$ with a $^*$-action of $\lie{g}$,
    i.e. \eqref{eq:StarAction}.
    \begin{propositionlist}
        \addtocounter{enumi}{3}
    \item The operator $L_{\algebra{A}}$ as well as the extended
        Fedosov derivation $\FedosovA$ are real.
    \item The Fedosov-Taylor series $\tau_{\algebra{A}}$ is real.
    \item The formal deformation $\star$ from
        Theorem~\ref{theorem:StarProduct} is a Hermitian deformation.
    \end{propositionlist}
\end{proposition}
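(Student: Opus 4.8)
The plan is to push the real/conjugation--compatibility of every ingredient of the Fedosov construction through the recursions, using throughout the convention that an operator extended $\ring{C}$--linearly is called \emph{real} if it commutes with the complex conjugation on $\algebra{A}\tensor\mathcal{W}_{\ring C}\tensor\Anti^\bullet_{\ring C}$, recalling that the graded $^*$--involution \eqref{eq:FedosovInvolution} is this conjugation composed with the order reversal and the sign conventions coming from $X^* = -X$ on $\lie{g}^*\subseteq\lie{g}^*_{\ring C}$, and recalling that the formal parameter is imaginary, so $\cc{\I t} = -\I t$. The first two items are then immediate from the definitions: $\delta = e^i\wedge\inss(e_i)$, $\delta^* = e^i\vee\insa(e_i)$, hence $\delta^{-1}$, and the projection $\sigma$ are built only from the real dual basis $\{e^i\}$ and from rational constants; and $D$ is assembled from $\nabla$, whose Christoffel symbols lie in $\ring R$, and from $\delta_\CE$, whose coefficients are the real structure constants $C_{ij}^k$. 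Being polynomial in $\nabla$, the curvature $R\in\Sym^2\lie{g}^*\tensor\Anti^2\lie{g}^*$ is real, and a short sign count from \eqref{eq:FedosovInvolution} (two symmetric and two antisymmetric anti--Hermitian factors, $t$--degree zero) gives $R = R^*$; the identity $D^2 = \frac{1}{\I t}\ad(R)$ is \eqref{eq:BianchiId} with the parameter replaced by $\I t$.

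For the claim about $\varrho$ I would conjugate the defining fixed--point equation. By Proposition~\ref{proposition:FedosovDerivation} (with parameter $\I t$) the element $\varrho\in\mathcal{W}_2\tensor\Anti^1$ is the unique solution of $\delta\varrho = R + D\varrho + \frac{1}{\I t}\varrho\circs\varrho + \Omega$ with $\delta^{-1}\varrho = 0$. Applying $^*$: one has $R = R^*$ by the previous item, $\Omega = \Omega^*$ by hypothesis, and since $\delta$, $D$ and $\delta^{-1}$ intertwine with \eqref{eq:FedosovInvolution} up to the signs dictated by their antisymmetric parities, the terms $(\delta\varrho)^*$, $(D\varrho)^*$, $(\delta^{-1}\varrho)^*$ turn into $\delta(\varrho^*)$, $D(\varrho^*)$, $\delta^{-1}(\varrho^*)$. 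For the quadratic term one invokes Lemma~\ref{lemma:HermitianFiberwise}, which applies because $\pi_- = r$ is Hermitian and $\pi_+ = \I s$ is anti--Hermitian: thus $(\varrho\circs\varrho)^* = -\varrho^*\circs\varrho^*$, and combined with $\cc{\I t} = -\I t$ this yields $\bigl(\tfrac{1}{\I t}\varrho\circs\varrho\bigr)^* = \tfrac{1}{\I t}\varrho^*\circs\varrho^*$. Hence $\varrho^*$ solves the same equation with the same normalization $\delta^{-1}\varrho^*=0$, so $\varrho^* = \varrho$ by uniqueness. The same cancellation of signs shows $\frac{1}{\I t}\ad(\varrho)$ is real, whence $\FedosovD = -\delta + D + \frac{1}{\I t}\ad(\varrho)$ is real.

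The remaining items follow formally. The operator $L_{\algebra{A}}(\xi\tensor f\tensor\alpha) = e_i\acts\xi\tensor f\tensor e^i\wedge\alpha$ is real because $(X\acts a)^* = X\acts a^*$ for $X\in\lie{g}$ --- which is \eqref{eq:StarAction} together with $X^* = -X$ --- and because $\{e_i\}$, $\{e^i\}$ are real; hence $\FedosovA = L_{\algebra{A}} + \FedosovD$ is real, and since $\tau_{\algebra{A}}$ is the geometric series \eqref{eq:ExtendedFedosovTaylorSeries} in the now--real operator $[\delta^{-1}, D + L_{\algebra{A}} + \frac{1}{\I t}\ad(\varrho)]$, it is real as well; as it takes values in $\ker\dega$ there is no residual antisymmetric sign, so $\tau_{\algebra{A}}(a^*) = \tau_{\algebra{A}}(a)^*$. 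Finally, by \eqref{eq:TheRealUDF} we have $a\star b = \sigma(\msA(\tau_{\algebra{A}}(a),\tau_{\algebra{A}}(b)))$; on the antisymmetric--degree--zero subalgebra $\msA$ is $\mu_{\algebra{A}}$ together with $\circs$, so using that $\algebra{A}$ is a $^*$--algebra and Lemma~\ref{lemma:HermitianFiberwise} (with both antisymmetric degrees zero) one gets $\msA(u,v)^* = \msA(v^*,u^*)$ there, and with $\sigma$ real one concludes
\begin{align*}
    (a\star b)^*
    &=
    \sigma\bigl(\msA(\tau_{\algebra{A}}(a),\tau_{\algebra{A}}(b))^*\bigr)
    =
    \sigma\bigl(\msA(\tau_{\algebra{A}}(b)^*,\tau_{\algebra{A}}(a)^*)\bigr)
    \\
    &=
    \sigma\bigl(\msA(\tau_{\algebra{A}}(b^*),\tau_{\algebra{A}}(a^*))\bigr)
    =
    b^*\star a^*,
\end{align*}
so $\star$ is Hermitian in the sense of \eqref{eq:HermitianDeformation}.

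The one genuinely delicate point is the sign accounting already used in the second step: one has to verify, once and for all and on generators, that $\delta$, $\delta^{-1}$ and $D$ intertwine with the graded $^*$--involution \eqref{eq:FedosovInvolution} with exactly the signs --- produced by the anti--Hermitian convention on $\lie{g}^*$, by the reordering of antisymmetric factors, and by the imaginary parameter --- that leave the defining equation for $\varrho$ invariant under $^*$. Once these compatibilities are in place, everything propagates through the recursion for $\varrho$, the geometric series for $\tau_{\algebra{A}}$, and the formula \eqref{eq:TheRealUDF} exactly as in the (Hermitian) Fedosov construction of \cite{waldmann:2007a}; this bookkeeping is the natural candidate to be relegated to an appendix.
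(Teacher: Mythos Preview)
The paper states this proposition without proof, treating it as a routine summary of the Hermitian compatibility of the Fedosov ingredients. Your proposal supplies exactly the standard verification one expects: reality of $\delta$, $\delta^{-1}$, $\sigma$, $D$ from the real basis and real Christoffel symbols, Hermiticity of $\varrho$ via the uniqueness clause of Proposition~\ref{proposition:FedosovDerivation} applied to $\varrho^*$, and then propagation through $L_{\algebra{A}}$, $\tau_{\algebra{A}}$ and the product formula. This is correct and is precisely the argument the paper leaves implicit; your closing remark that the only genuine work is the sign bookkeeping for the graded $^*$--involution is accurate and matches the spirit of the paper's omission.
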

When we apply this to the twist itself we first have to clarify which
$^*$-involution we take on the tensor algebra
$\Tensor^\bullet(\ueac{\lie{g}})$: by the universal property of the
tensor algebra, there is a unique way to extend the $^*$-involution of
$\ueac{\lie{g}}$ as a $^*$-involution. With respect to this
$^*$-involution we have $r^* = - r$ since $r$ is not only real as an
element of $\lie{g}_{\ring{C}} \tensor \lie{g}_{\ring{C}}$ but also
antisymmetric, causing an additional sign with respect to the
$^*$-involution of $\Tensor^\bullet(\ueac{\lie{g}})$. Analogously, we
have $s^* = s$ for the real and symmetric part of $\pi$.
\begin{corollary}
    \label{corollary:TwistHermitian}%
    The Fedosov twist $\mathcal{F}$ is Hermitian.
\end{corollary}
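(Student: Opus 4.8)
The statement to prove is that the Fedosov twist $\twist{F} = \twist{F}_{\Omega,\nabla,s} = 1 \star 1$, viewed in $(\ueac{\lie{g}} \tensor \ueac{\lie{g}})[[t]] \subseteq \Tensor^\bullet(\ueac{\lie{g}})[[t]]$, is fixed by the $^*$-involution extending $X^* = -X$; equivalently, that the associated universal deformation $a \star_{\twist{F}} b = \mu_{\algebra{A}}(\twist{F} \acts (a \tensor b))$ is a Hermitian deformation of every $^*$-algebra $\algebra{A}$ over $\ring{C}$ carrying a $^*$-action of $\lie{g}$. As a first sanity check I would record that by Theorem~\ref{Thm:twist} one has $\twist{F} = 1 \tensor 1 + \frac{\I t}{2}(r + \I s) + \mathcal{O}(t^2)$, and since $r$ is real and antisymmetric while $s$ is real and symmetric we get $r^* = -r$ and $s^* = s$, hence $\twist{F}^* = \twist{F}$ modulo $t^2$, which fixes the shape of the claim.

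The plan is to reduce the general statement to Proposition~\ref{proposition:HermitianFedosov} via the coincidence of the two constructions. For an arbitrary $^*$-algebra $\algebra{A}$ over $\ring{C}$ with a $^*$-action of $\lie{g}$, Proposition~\ref{prop:Coincide} gives $a \star_{\twist{F}} b = a \star_{\Omega,\nabla,s} b$, and the right-hand side is a Hermitian deformation by Proposition~\ref{proposition:HermitianFedosov}, so $(a \star_{\twist{F}} b)^* = b^* \star_{\twist{F}} a^*$ for all such $\algebra{A}$, $a$, $b$. Now I would unwind this: using the $^*$-compatibility $(\xi \acts a)^* = S(\xi)^* \acts a^*$ (which on $\lie{g}$ reads $(X \acts a)^* = X \acts a^*$ because $X^* = -X$) together with $\mu_{\algebra{A}}$ being the opposite product under $^*$, the identity $(a \star_{\twist{F}} b)^* = b^* \star_{\twist{F}} a^*$ valid in every $^*$-module algebra unwinds to a single algebraic identity for $\twist{F}$ in $(\ueac{\lie{g}} \tensor \ueac{\lie{g}})[[t]]$ (the flipped complex conjugate of $\twist{F}$ equals $\twist{F}$). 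Evaluating this on a sufficiently faithful $^*$-module algebra — or, equivalently, taking this universal Hermiticity to be the meaning of ``$\twist{F}$ is a Hermitian twist'' — together with the evident invariance of $\twist{F}$ under $S \tensor S$ (checkable order by order, or from the balanced form of the recursion) yields $\twist{F}^* = \twist{F}$ with respect to the tensor-algebra $^*$-involution.

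The point I expect to be the real obstacle, and the reason for routing through an arbitrary $^*$-module algebra rather than re-running the Fedosov construction on $\Tensor^\bullet(\ueac{\lie{g}})$ as in Theorem~\ref{Thm:twist}, is that the left-multiplication action underlying $\twist{F} = 1 \star 1$ is \emph{not} a $^*$-action: since $^*$ is an anti-automorphism, $(e_i \acts \xi)^* = (e_i \xi)^* = \xi^* e_i^* = -\,\xi^* e_i$ involves right, not left, multiplication, so the operator $L$ of \eqref{eq:LforUg} and the Fedosov--Taylor series $\tau$ are not compatible with the combined $^*$-involution on $\Tensor^\bullet(\ueac{\lie{g}}) \tensor \mathcal{W}_{\ring{C}} \tensor \Anti^\bullet_{\ring{C}}$, and $\tau(1)$ fails to be Hermitian. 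The reality of $\delta$, $\delta^{-1}$, $\sigma$, $D$, $\varrho$ and, through Lemma~\ref{lemma:HermitianFiberwise}, of the fiberwise product $\circs$ with $\pi = r + \I s$ does survive on this enlarged algebra; what one gains by working over a genuine $^*$-module algebra $\algebra{A}$ is precisely that there $L_{\algebra{A}}$ becomes real as well, so $\FedosovA$, $\tau_{\algebra{A}}$ and finally $\star_{\Omega,\nabla,s}$ inherit the correct reality and the argument above goes through. (A direct proof is also possible by carefully tracking the interplay of the antipode with the reversal built into the tensor-algebra $^*$-involution: conjugation converts the left-multiplication Fedosov construction into the right-multiplication one, which on one side is $\twist{F}^*$ and on the other is $(\tau_0 \tensor \tau_0)\twist{F}$ for the transposition anti-automorphism $\tau_0$ of $\ueac{\lie{g}}$, reducing the claim to the symmetry $(\tau_0 \tensor \tau_0)\twist{F} = \twist{F}$; but this symmetry is again most transparently obtained from the universal-deformation-formula picture.)
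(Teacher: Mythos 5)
Your route is genuinely different from the paper's: the paper simply applies the Hermitian-deformation statement (Proposition~\ref{proposition:HermitianFedosov}, last part) to the deformed tensor algebra and computes $(1\star 1)^* = 1^*\star 1^* = 1\star 1$, whereas you go through arbitrary $^*$-module algebras via Proposition~\ref{prop:Coincide}. Your motivating observation is legitimate and worth making explicit — the left-multiplication action on $\Tensor^\bullet(\ueac{\lie{g}})$ does not satisfy \eqref{eq:StarAction}, so the Hermiticity of $\star$ on the tensor algebra is not a verbatim instance of Proposition~\ref{proposition:HermitianFedosov} — but your replacement argument has a gap precisely at its crux. Unwinding $(a\star_{\twist{F}}b)^* = b^*\star_{\twist{F}}a^*$ with the $^*$-action rule $(\xi\acts a)^* = S(\xi)^*\acts a^*$ produces the antipode in both legs: writing $\twist{F} = \sum F^{(1)}\tensor F^{(2)}$, the universal identity you can extract is $\sum \bigl(S(F^{(2)})\bigr)^* \tensor \bigl(S(F^{(1)})\bigr)^* = \twist{F}$, i.e. $\mathsf{T}(\cc{\twist{F}}) = \twist{F}$ since $*\circ S$ is just complex conjugation on $\ueac{\lie{g}}$. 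With respect to the factor-reversing involution of $\Tensor^\bullet(\ueac{\lie{g}})$ this says $\twist{F}^* = (S\tensor S)(\twist{F})$, not $\twist{F}^* = \twist{F}$. The remaining ingredient you invoke, the ``evident invariance of $\twist{F}$ under $S\tensor S$'', is neither evident nor checkable by an obvious recursion: applying $S$ factorwise intertwines the left-multiplication data with (minus) right-multiplication data, so $(S\tensor S)(1\star 1)$ is a priori the twist of a different Fedosov construction, and within your own scheme the identity $(S\tensor S)(\twist{F})=\twist{F}$ is exactly equivalent to the claim you are proving. At this point the argument is circular, not a proof.

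A second, smaller gap: to pass from ``the universal deformation formula is Hermitian on every $^*$-module algebra'' to an identity of elements of $(\ueac{\lie{g}}\tensor\ueac{\lie{g}})[[t]]$ you need a faithful $^*$-module algebra, which you do not exhibit; over a general ordered ring $\ring{R}$ (the paper's setting) this is not automatic, and the natural candidate $\Tensor^\bullet(\ueac{\lie{g}})$ is excluded by your own observation about left multiplication. The fallback of \emph{declaring} universal Hermiticity to be the meaning of ``Hermitian twist'' changes the statement: the paper means $\twist{F}^* = \twist{F}$ in $\Tensor^\bullet(\ueac{\lie{g}})[[t]]$ with the factor-reversing involution (this is what is used in Remark~\ref{remark:HermitianPositiveWhatever} to read $\twist{F} = 1^*\star 1$ as a positive element). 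To repair the proof you should either justify Hermiticity of the deformed product on the tensor algebra at least on the relevant elements (e.g. by showing that the combined involution, with the paper's convention that $\lie{g}^*$ is anti-Hermitian, intertwines the left-multiplication Fedosov data with itself on $\tau(1)$), or prove $(S\tensor S)(\twist{F}) = \twist{F}$ independently; as written, neither is done.
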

\begin{proof}
    Indeed, $1 \in \ueac{\lie{g}}$ is Hermitian and hence
    $(1 \star 1)^* = 1^* \star 1^* = 1 \star 1$.
\end{proof}

Up to now we have not yet used the fact that $\ring{R}$ is ordered but
only that we have a $^*$-involution. The ordering of $\ring{R}$ allows
to transfer concepts of positivity from $\ring{R}$ to every
$^*$-algebra over $\ring{C}$. Recall that a linear functional
$\omega\colon \algebra{A} \longrightarrow \ring{C}$ is called
\emph{positive} if
\begin{equation}
    \label{eq:omegaPositive}
    \omega(a^*a) \ge 0
\end{equation}
for all $a \in \algebra{A}$. This allows to define an algebra element
$a \in \algebra{A}$ to be \emph{positive} if $\omega(a) \ge 0$ for all
positive $\omega$. Note that the positive elements denoted by
$\algebra{A}^+$, form a convex cone in $\algebra{A}$ and
$a \in \algebra{A}^+$ implies $b^*ab \in \algebra{A}^+$ for all
$b \in \algebra{A}$. Moreover, elements of the form $a = b^*b$ are
clearly positive: their convex combinations are denoted by
$\algebra{A}^{++}$ and called \emph{algebraically positive}. More
details on these notions of positivity can be found in
\cite{bursztyn.waldmann:2005b, bursztyn.waldmann:2001a,
  waldmann:2005b}.

Since with $\ring{R}$ also $\ring{R}[[ t ]]$ is ordered, one can
compare the positive elements of $\algebra{A}$ and the ones of
$(\algebra{A}[[ t ]], \star)$, where $\star$ is a Hermitian
deformation. The first trivial observation is that for a positive
linear functional
$\boldsymbol{\omega} = \omega_0 +  t  \omega_1 + \cdots$ of the
deformed algebra, i.e. $\boldsymbol{\omega}(a^* \star a) \ge 0$ for
all $a \in \algebra{A}[[ t ]]$ the classical limit $\omega_0$ of
$\boldsymbol{\omega}$ is a positive functional of the undeformed
algebra. The converse needs not to be true: one has examples where a
positive $\omega_0$ is not directly positive for the deformed
algebras, i.e. one needs higher order corrections, and one has
examples where one simply can not find such higher order corrections
at all, see \cite{bursztyn.waldmann:2005a,
  bursztyn.waldmann:2000a}. One calls the deformation $\star$ a
\emph{positive deformation} if every positive linear functional
$\omega_0$ of the undeformed algebra $\algebra{A}$ can be deformed
into a positive functional
$\boldsymbol{\omega} = \omega_0 +  t \omega_1 + \cdots$ of the
deformed algebra $(\algebra{A}[[ t ]], \star)$. Moreover, since also
$\Mat_n(\algebra{A})$ is a $^*$-algebra in a natural way we call
$\star$ a \emph{completely positive deformation} if for all $n$ the
canonical extension of $\star$ to $\Mat_n(\algebra{A})[[ t ]]$ is a
positive deformation of $\Mat_n(\algebra{A})$, see
\cite{bursztyn.waldmann:2005a}. Finally, if no higher order
corrections are needed, then $\star$ is called a \emph{strongly
  positive deformation}, see \cite[Def.~4.1]{bursztyn.waldmann:2000a}

In a next step we want to use a Kähler structure for $\lie{g}$. In
general, this will not exist so we have to require it explicitly. In
detail, we want to be able to find a basis
$e_1, \ldots, e_n, f_1, \ldots, f_n \in \lie{g}$ with the property
that the $r$-matrix decomposes into
\begin{equation}
    \label{eq:ABpartOfr}
    (e^k \tensor f^\ell)(r)
    =
    A^{k\ell}
    =
    - (f^\ell \tensor e^k)(r)
    \quad
    \textrm{and}
    \quad
    (e^k \tensor e^\ell)(r)
    =
    B^{k\ell}
    =
    - (f^k \tensor f^\ell)(r)
\end{equation}
with a symmetric matrix $A = A^\Trans \in \Mat_n(\ring{R})$ and an
antisymmetric matrix $B = - B^\Trans \in \Mat_n(\ring{R})$. We set
\begin{equation}
    \label{eq:DefinitionOfs}
    s
    =
    A^{k\ell} (e_k \tensor e_\ell + f_k \tensor f_\ell)
    +
    B^{k\ell} e_k \tensor f_\ell
    +
    B^{k\ell} f_\ell \tensor e_k.
\end{equation}
The requirement of being \emph{Kähler} is now that first we find a
symplectic covariant derivative $\nabla$ with $\nabla s = 0$. Second,
we require the symmetric two-tensor $s$ to be positive in the sense
that for all $x \in \lie{g}^*$ we have $(x \tensor x)(s) \ge 0$.  In
this case we call $s$ (and the compatible $\nabla$) a Kähler structure
for $r$.  We have chosen this more coordinate-based formulation over
the invariant one since in the case of an ordered ring $\ring{R}$
instead of the reals $\mathbb{R}$ it is more convenient to start
directly with the nice basis we need later on.

As usual we consider now $\lie{g}_{\ring{C}}$ with the vectors
\begin{equation}
    \label{eq:ZkccZell}
    Z_k
    =
    \frac{1}{2}(e_k - \I f_\ell)
    \quad
    \textrm{and}
    \quad
    \cc{Z}_\ell
    =
    \frac{1}{2}(e_k + \I f_\ell)
\end{equation}
which together constitute a basis of the complexified Lie
algebra. Finally, we have the complex matrix
\begin{equation}
    \label{eq:complexMatrixg}
    g = A + \I B \in \Mat_n(\ring{C}),
\end{equation}
which satisfies now the positivity requirement
\begin{equation}
    \label{eq:gPositive}
    \cc{z_k} g^{k\ell} z_\ell \ge 0
\end{equation}
for all $z_1, \ldots, z_n \in \ring{C}$.  If our ring $\ring{R}$ has
sufficiently many inverses and square roots, one can even find a basis
$e_1, \ldots, e_n, f_1, \ldots, f_n$ such that $g$ becomes the unit
matrix. However, since we want to stay with an arbitrary ordered ring
$\ring{R}$ we do not assume this.

We use now $\pi = r + \I s$ to obtain a fiberwise Hermitian product
$\circwick$, called the fiberwise Wick product. Important is now the
following explicit form of $\circwick$, which is a routine
verification:
\begin{lemma}
    \label{lemma:FiberwiseWick}%
    For the fiberwise Wick product $\circwick$ build out of $\pi = r + \I
    s$ with a Kähler structure $s$ one has
    \begin{equation}
        \label{eq:FiberwiseWick}
        a \circwick b
        =
        \mu \circ
        \E^{2 t  g^{k\ell} \inss(Z_k) \tensor \inss(\cc{Z}_\ell)}
        (a \tensor b),
    \end{equation}
    where $g$ is the matrix from \eqref{eq:complexMatrixg}.
\end{lemma}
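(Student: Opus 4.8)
The plan is to treat the statement as a direct computation from the definition \eqref{eq:WeylMoyalStarProduct} of $\circs$, specialized to $\pi = r + \I s$ and to the Hermitian normalization $t \mapsto \I t$ introduced before Lemma~\ref{lemma:HermitianFiberwise}, so that
\[
a \circwick b = \mu \circ \E^{\frac{\I t}{2}\mathcal{P}_\pi}(a \tensor b),
\qquad
\mathcal{P}_\pi = \pi^{ij}\inss(b_i) \tensor \inss(b_j)
\]
for any basis $(b_i)$ of $\lie{g}_{\ring{C}}$. The first step is to note that $\mathcal{P}_\pi$ is independent of the chosen basis: it is simply the image of the two-tensor $\pi \in \lie{g}_{\ring{C}} \tensor \lie{g}_{\ring{C}}$ under the $\ring{C}$-linear map $X \tensor Y \mapsto \inss(X) \tensor \inss(Y)$, since $\inss$ is linear in its argument. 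Hence it suffices to rewrite $\pi$ itself in the Wirtinger basis $\{Z_1,\dots,Z_n,\cc{Z}_1,\dots,\cc{Z}_n\}$ of \eqref{eq:ZkccZell}.

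Next I would invert \eqref{eq:ZkccZell}, writing $e_k = Z_k + \cc{Z}_k$ and $f_k = \I(Z_k - \cc{Z}_k)$, and substitute these into the explicit expressions \eqref{eq:ABpartOfr} for $r$ and \eqref{eq:DefinitionOfs} for $s$, collecting the result according to the four types $Z_k\tensor Z_\ell$, $Z_k\tensor\cc{Z}_\ell$, $\cc{Z}_k\tensor Z_\ell$, $\cc{Z}_k\tensor\cc{Z}_\ell$ and using the symmetry $A = A^\Trans$ and antisymmetry $B = -B^\Trans$ together with $g = A + \I B$ from \eqref{eq:complexMatrixg}. The key point is that in $\pi = r + \I s$ the contributions of the symmetric and antisymmetric parts conspire: the purely holomorphic and purely antiholomorphic types drop out, and so does one of the two mixed types, the whole reason for defining $s$ exactly as in \eqref{eq:DefinitionOfs} being that $\I s$ supplies precisely the correction turning $\pi$ into an object of the single remaining mixed type. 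One is then left with $\mathcal{P}_\pi$ equal to a scalar multiple of $g^{k\ell}\inss(Z_k)\tensor\inss(\cc{Z}_\ell)$; carrying this numerical factor through $\frac{\I t}{2}\mathcal{P}_\pi$ produces the exponent $2t\, g^{k\ell}\inss(Z_k)\tensor\inss(\cc{Z}_\ell)$ of \eqref{eq:FiberwiseWick}. Finally one invokes Lemma~\ref{lemma:HermitianFiberwise} (or just reality of $r$ and $s$) to confirm that this product is indeed the fiberwise Hermitian one, so that the name $\circwick$ is justified.

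There is no conceptual obstacle here — the lemma really is a routine verification — so the only thing to watch is the bookkeeping: inverting \eqref{eq:ZkccZell} with the correct signs, keeping track of complex conjugates, using the (anti)symmetry of $A$ and $B$ when re-indexing summed pairs $(k,\ell)\leftrightarrow(\ell,k)$, and not forgetting the substitution $t\mapsto\I t$ inherited from the Hermitian set-up. The one place where something genuinely must be checked rather than merely computed is the cancellation of all $(2,0)$- and $(0,2)$-type terms, as this is exactly where the specific form \eqref{eq:DefinitionOfs} of $s$ — that is, the Kähler compatibility of $s$ with $r$ — is used.
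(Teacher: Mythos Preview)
Your proposal is correct and matches the paper's own treatment: the paper does not give a written proof at all but merely introduces the lemma with the phrase ``which is a routine verification,'' and your outline---rewriting $\pi=r+\I s$ in the Wirtinger basis via $e_k=Z_k+\cc{Z}_k$, $f_k=\I(Z_k-\cc{Z}_k)$, using the (anti)symmetry of $A$ and $B$ to collapse the four tensor types, and tracking the prefactor through $\tfrac{\I t}{2}$---is exactly that verification. Your remark that the cancellation of the $(2,0)$ and $(0,2)$ pieces is precisely where the specific form \eqref{eq:DefinitionOfs} of $s$ enters is the only substantive observation to be made, and it is the right one.
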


The first important observation is that the scalar matrix $g$ can be
viewed as element of $\Mat_n(\algebra{A})$ for any unital
$^*$-algebra. Then we have the following positivity property:
\begin{lemma}
    \label{lemma:gIsReallyPositive}%
    Let $\algebra{A}$ be a unital $^*$-algebra over $\ring{C}$. Then
    for all $m \in \mathbb{N}$ and for all
    $a_{k_1 \ldots k_m} \in \algebra{A}$ with
    $k_1, \ldots, k_m = 1, \ldots, n$ we have
    \begin{equation}
        \label{eq:TheTruePositivityNeeded}
        \sum_{k_1, \ell_1, \ldots, k_m, \ell_m = 1}^n
        g^{k_1\ell_1} \cdots g^{k_m\ell_m}
        a_{k_1 \ldots k_m}^* a_{\ell_1 \ldots \ell_m}
        \in
        \algebra{A}^+.
    \end{equation}
\end{lemma}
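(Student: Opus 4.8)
The plan is to prove the claimed positivity by induction on $m$, reducing the statement about a product of $m$ copies of the matrix $g$ to the case $m=1$, which is essentially the defining positivity \eqref{eq:gPositive} of $g$ tensored up to matrices over $\algebra{A}$. First I would observe that the case $m = 1$ says precisely that the scalar matrix $g = (g^{k\ell}) \in \Mat_n(\ring{C}) \subseteq \Mat_n(\algebra{A})$ is a positive element of the $^*$-algebra $\Mat_n(\algebra{A})$: for a single family $a_k \in \algebra{A}$, writing $a = (a_1, \dots, a_n)$ as a row, the element $\sum_{k,\ell} g^{k\ell} a_k^* a_\ell$ is of the form $a\, g\, a^*$ up to the usual identifications, and positivity of such elements for a positive matrix $g$ is standard (a positive $g$ over $\ring{C}$ with enough square roots factorizes as $g = h^* h$; in general one uses that $g$ positive semidefinite over an ordered ring still admits, after passing to $\ring{R}[[t]]$ if needed, or directly by an $LDL^\Trans$-type argument, a presentation making $\sum g^{k\ell} a_k^* a_\ell$ a convex combination of elements $b^* b$). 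The key external input here is only that the $c$-number matrix $g$ from \eqref{eq:complexMatrixg} satisfies \eqref{eq:gPositive}, i.e. it is positive semidefinite over $\ring{C}$, hence lies in $\Mat_n(\ring{C})^{++} \subseteq \Mat_n(\ring{C})^+$.

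For the inductive step I would exploit the tensor/Schur-product structure: the coefficient $g^{k_1 \ell_1} \cdots g^{k_m \ell_m}$ is the matrix entry of the $m$-fold Kronecker power $g^{\otimes m} \in \Mat_{n^m}(\ring{C})$, indexed by the multi-indices $(k_1, \dots, k_m)$ and $(\ell_1, \dots, \ell_m)$. Since $g$ is positive semidefinite, so is $g^{\otimes m}$ (the Kronecker product of positive semidefinite matrices is positive semidefinite — this is again a pure $c$-number fact over the ordered ring $\ring{C}$, provable by induction from the $m=1$ case together with the fact that $g^{\otimes m} = g \otimes g^{\otimes(m-1)}$ and that $X \otimes Y$ is a compression-and-product of $X \otimes \mathbbm{1}$ and $\mathbbm{1} \otimes Y$, both positive). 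Relabelling the multi-indices $I = (k_1, \dots, k_m)$ as a single index running over $\{1,\dots,n\}^m$, and setting $b_I = a_{k_1 \dots k_m}$, the sum in \eqref{eq:TheTruePositivityNeeded} becomes exactly $\sum_{I,J} (g^{\otimes m})^{IJ} b_I^* b_J$, which is the $m=1$ statement applied to the positive matrix $g^{\otimes m} \in \Mat_{n^m}(\ring{C})$ and the family $b_I \in \algebra{A}$. Hence it lands in $\algebra{A}^+$.

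So the whole argument factors into two genuinely $c$-number lemmas over the ordered ring $\ring{C} = \ring{R}(\I)$ — (i) Kronecker powers of positive semidefinite matrices are positive semidefinite, and (ii) for a positive semidefinite $G \in \Mat_N(\ring{C})$ and any $b_1, \dots, b_N$ in a unital $^*$-algebra $\algebra{A}$ over $\ring{C}$ one has $\sum_{I,J} G^{IJ} b_I^* b_J \in \algebra{A}^{++} \subseteq \algebra{A}^+$ — plus the trivial relabelling that glues them together. Statement (ii) is precisely the well-known fact that positive elements of $\Mat_N(\ring{C})$ act positively, i.e. the canonical $\Mat_N(\ring{C})$-module structure on $\algebra{A}^N$ is compatible with positivity; over an arbitrary ordered ring this is in \cite{bursztyn.waldmann:2005a, bursztyn.waldmann:2001a}. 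The main obstacle, such as it is, is purely bookkeeping: making sure the Kronecker-power positivity is proved without assuming square roots exist in $\ring{R}$ (so one argues by the $X \otimes \mathbbm{1}$, $\mathbbm{1} \otimes Y$ factorization and closure of the positive cone under $c^* (\argument) c$, rather than by diagonalization), and that the index relabelling $I \leftrightarrow (k_1,\dots,k_m)$ is carried out consistently on both the matrix entries and the algebra elements. None of this involves the Fedosov machinery; it is entirely a statement about positivity in matrix $^*$-algebras over ordered rings, which is why I expect the proof in the paper to be short and to cite \cite{bursztyn.waldmann:2005a} for the key ingredient (ii).
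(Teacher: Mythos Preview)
Your overall architecture is the same as the paper's: both reduce the sum to the form $\sum_{I,J} (g^{\otimes m})^{IJ} b_I^* b_J$ with $G = g^{\otimes m}$ positive in $\Mat_{n^m}(\ring{C})$, and then appeal to a general lemma that such expressions are positive in $\algebra{A}$. The difference lies in how that key lemma~(ii) is established. You propose to get it by decomposing $G$ (factorization $G = h^*h$ or an $LDL^\Trans$-type argument), which would even land you in $\algebra{A}^{++}$. Over a general ordered ring $\ring{R}$ this is the shaky step: square roots need not exist, and an $LDL^*$ decomposition requires inverting pivots, so neither route is guaranteed to work. The paper explicitly flags this in the remark following the lemma, noting that the stronger conclusion $\algebra{A}^{++}$ is available only under extra assumptions such as $g$ diagonal, e.g.\ for $\ring{R} = \mathbb{R}$.

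The paper's argument for (ii) avoids any decomposition of $G$ altogether. Instead it uses the dual definition of $\algebra{A}^+$: for an arbitrary positive functional $\omega$ on $\algebra{A}$, the matrix $\Omega = (\omega(b_I^* b_J)) \in \Mat_{n^m}(\ring{C})$ is positive (immediate from the quadratic-form criterion), and then $\omega$ applied to your sum equals $\tr(G\,\Omega)$, which is $\ge 0$ because the trace of a product of two positive matrices over $\ring{C}$ is positive (this last scalar fact is the one actually imported from \cite{bursztyn.waldmann:2001a}). This gives only $\algebra{A}^+$, which is all the lemma claims. So your plan is correct modulo retracting the $\algebra{A}^{++}$ claim and either citing (ii) as a black box or replacing your decomposition sketch by the paper's trace argument.
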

\begin{proof}
    First we note that
    $g^{\tensor m} = g \tensor \cdots \tensor g \in \Mat_n(\ring{C})
    \tensor \cdots \tensor \Mat_n(\ring{C}) = \Mat_{n^m}(\ring{C})$
    still satisfies the positivity property
    \[
    \sum_{k_1, \ell_1, \ldots, k_m, \ell_m = 1}^n
    g^{k_1\ell_1} \cdots g^{k_m\ell_m}
    \cc{z^{(1)}}_{k_1} \cdots \cc{z^{(m)}}_{k_m}
    z^{(1)}_{\ell_1} \cdots z^{(m)}_{\ell_m}
    \ge
    0
    \]
    for all $z^{(1)}, \ldots, z^{(m)} \in \ring{C}^n$ as the left hand
    side clearly factorizes into $m$ copies of the left hand side of
    \eqref{eq:gPositive}. Hence
    $g^{\tensor m} \in \Mat_{n^m}(\ring{C})$ is a positive
    element. For a given positive linear functional
    $\omega\colon \algebra{A} \longrightarrow \ring{C}$ and
    $b_1, \ldots, b_N \in \algebra{A}$ we consider the matrix
    $(\omega(b_i^*b_j)) \in \Mat_N(\ring{C})$.  We claim that this
    matrix is positive, too. Indeed, with the criterion from
    \cite[App.~A]{bursztyn.waldmann:2001a} we have for all
    $z_1, \ldots, z_N \in \ring{C}$
    \[
    \sum_{i,j = 1}^N \cc{z}_i \omega(b_i^*b_j) z_j
    =
    \omega\left(
        \left(
            \sum_{i=1}^N z_i b_i
        \right)^*
        \left(
            \sum_{j=1}^N z_j b_j
        \right)
    \right)
    \ge 0
    \]
    and hence $(\omega(b_i^*b_j))$ is positive.  Putting these
    statements together we see that for every positive linear
    functional $\omega\colon \algebra{A} \longrightarrow \ring{C}$ we
    have for the matrix
    $\Omega = (\omega(a_{k_1 \ldots k_m}^* a_{\ell_1 \ldots \ell_m}))
    \in \Mat_{n^m}(\ring{C})$
    \begin{align*}
        \omega\left(
            \sum_{k_1, \ell_1, \ldots, k_m, \ell_m = 1}^n
            g^{k_1\ell_1} \cdots g^{k_m\ell_m}
            a_{k_1 \ldots k_m}^* a_{\ell_1 \ldots \ell_m}
        \right)
        &=
        \sum_{k_1, \ell_1, \ldots, k_m, \ell_m = 1}^n
        g^{k_1\ell_1} \cdots g^{k_m\ell_m}
        \omega\left(
            a_{k_1 \ldots k_m}^* a_{\ell_1 \ldots \ell_m}
        \right) \\
        &=
        \tr(g^{\tensor m} \Omega)
        \ge
        0,
    \end{align*}
    since the trace of the product of two positive matrices is
    positive by \cite[App.~A]{bursztyn.waldmann:2001a}. Note that for
    a \emph{ring} $\ring{R}$ one has to use this slightly more
    complicated argumentation: for a field one could use the
    diagonalization of $g$ instead. By definition of $\algebra{A}^+$,
    this shows the positivity of \eqref{eq:TheTruePositivityNeeded}.
\end{proof}
\begin{remark}
    \label{remark:Diagonalg}%
    Suppose that in addition $g = \diag(\lambda_1, \ldots, \lambda_n)$
    is diagonal with positive $\lambda_1, \ldots, \lambda_n > 0$. In
    this case one can directly see that the left hand side of
    \eqref{eq:TheTruePositivityNeeded} is a convex combination of
    squares and hence in $\algebra{A}^{++}$. This situation can often
    be achieved, e.g. for $\ring{R} = \mathbb{R}$.
\end{remark}
We come now to the main theorem of this section: unlike the Weyl-type
deformation, using the fiberwise Wick product yields a positive
deformation in a universal way:
\begin{theorem}
    \label{theorem:WickIsPositive}%
    Let $\algebra{A}$ be a unital $^*$-algebra over
    $\ring{C} = \ring{R}(\I)$ with a $^*$-action of $\lie{g}$ and let
    $\Omega = \Omega^* \in \Anti^2 \lie{g}_{\ring{C}}^*$ be a formal
    series of Hermitian $\delta_\CE$-closed two-forms. Moreover, let
    $s$ be a Kähler structure for the non-degenerate $r$-matrix
    $r \in \lie{g}$ and consider the fiberwise Wick product
    $\circwick$ yielding the Hermitian deformation $\starwick$ as in
    Proposition~\ref{proposition:HermitianFedosov}.
    \begin{theoremlist}
    \item \label{item:astara} For all $a \in \algebra{A}$ we have
        \begin{equation}
            \label{eq:astara}
            a^* \starwick a
            =
            \sum_{m=0}^\infty \frac{(2 t )^m}{m!}
            \sum_{k_1, \ldots, k_m, \ell_1, \ldots, \ell_m = 1}^n
            g^{k_1\ell_1} \cdots g^{k_m\ell_m}
            a_{k_1\ldots k_m}^* a_{\ell_1 \ldots \ell_m},
        \end{equation}
        where
        $a_{k_1\ldots k_m} = \sigma\left( \inss(\cc{Z}_{k_1}) \cdots
            \inss(\cc{Z}_{k_m}) \tauwick(a) \right)$.
    \item \label{item:starwickPositive} The deformation $\starwick$ is
        strongly positive.
    \end{theoremlist}
\end{theorem}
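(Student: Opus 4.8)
The plan is to derive the explicit formula \refitem{item:astara} by a direct computation with the fiberwise Wick product, and then to read off \refitem{item:starwickPositive} as an essentially formal consequence of \refitem{item:astara} together with Lemma~\ref{lemma:gIsReallyPositive}.

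For \refitem{item:astara} I would start from $a^* \starwick a = \sigma(\msA(\tauwick(a^*),\tauwick(a)))$ and use that, by Proposition~\ref{proposition:HermitianFedosov}, both $\tauwick$ and $\sigma$ are real, so $\tauwick(a^*) = \tauwick(a)^*$ and $\sigma(x^*) = \sigma(x)^*$. Since $\msA$ acts by $\mu_{\algebra{A}}$ on the $\algebra{A}$-factor and by $\circwick$ on the $\mathcal{W}\tensor\Anti^\bullet$-factor, and since by Lemma~\ref{lemma:FiberwiseWick} the operator implementing $\circwick$ is $\E^{2t g^{k\ell}\inss(Z_k)\tensor\inss(\cc{Z}_\ell)}$ with the commuting insertion derivations acting only on the symmetric tensors, I would expand this exponential. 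Using that $\sigma$ is an algebra homomorphism for the undeformed product $\mu$ (the symmetric and antisymmetric degrees being additive), the $m$-th term yields, after applying $\sigma$, the contribution $\frac{(2t)^m}{m!}\,g^{k_1\ell_1}\cdots g^{k_m\ell_m}$ times the product in $\algebra{A}$ of $\sigma(\inss(Z_{k_1})\cdots\inss(Z_{k_m})\tauwick(a)^*)$ and $\sigma(\inss(\cc{Z}_{\ell_1})\cdots\inss(\cc{Z}_{\ell_m})\tauwick(a))$. The second factor is $a_{\ell_1\ldots\ell_m}$ by definition; for the first one I would use the reality of $\tauwick$ together with the way the insertion derivations intertwine with the graded $^*$-involution of $\mathcal{W}_{\ring{C}}\tensor\Anti^\bullet_{\ring{C}}$ (and the conjugation exchanging $Z_k$ and $\cc{Z}_k$) to rewrite it as $a_{k_1\ldots k_m}^*$. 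Tracking the signs of this involution consistently is the only genuinely technical point of the proof.

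For \refitem{item:starwickPositive} I would first note that, by $\ring{R}[[t]]$-bilinearity of $\starwick$ and $\ring{R}[[t]]$-linearity of $\tauwick$, $\sigma$ and the insertion derivations, formula \refitem{item:astara} holds verbatim for every $a \in \algebra{A}[[t]]$, now with $a_{k_1\ldots k_m}\in\algebra{A}[[t]]$. Since $\algebra{A}[[t]]$ is again a unital $^*$-algebra (over the ordered ring $\ring{C}[[t]]$) and $g \in \Mat_n(\ring{C})$ still satisfies the positivity requirement \eqref{eq:gPositive} over $\ring{R}[[t]]$, Lemma~\ref{lemma:gIsReallyPositive} applies: each inner sum $\sum g^{k_1\ell_1}\cdots g^{k_m\ell_m} a_{k_1\ldots k_m}^* a_{\ell_1\ldots\ell_m}$ lies in $(\algebra{A}[[t]])^+$. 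Multiplying by $\frac{(2t)^m}{m!}$, which is a non-negative element of $\ring{R}[[t]]$, and summing over $m$ — the sum being $t$-adically convergent since the $m$-th term has $t$-order at least $m$ — gives $a^* \starwick a \in (\algebra{A}[[t]])^+$, because a $t$-adically convergent sum of non-negative elements of $\ring{R}[[t]]$ is non-negative (at the lowest order carrying a non-zero coefficient only non-negative contributions occur and at least one is strictly positive).

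Finally, recall that the $\ring{R}[[t]]$-linear extension $\boldsymbol{\omega}$ of any positive linear functional $\omega_0$ of $\algebra{A}$ is a positive functional of $\algebra{A}[[t]]$ (a standard order-by-order Cauchy--Schwarz argument, see e.g. \cite{bursztyn.waldmann:2000a, bursztyn.waldmann:2001a}). Hence $\boldsymbol{\omega}(a^* \starwick a) \ge 0$ for all $a \in \algebra{A}[[t]]$, i.e. $\omega_0$ stays positive for $(\algebra{A}[[t]],\starwick)$ \emph{without} any higher-order correction, which is precisely the definition of a strongly positive deformation; running the same argument over $\Mat_n(\algebra{A})$, which carries the induced $^*$-action of $\lie{g}$, shows in addition that $\starwick$ is completely positive. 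I expect the sign bookkeeping in the first step to be the main obstacle; everything after \refitem{item:astara} is a routine combination of Lemma~\ref{lemma:gIsReallyPositive} and the standard positivity formalism.
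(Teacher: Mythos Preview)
Your proposal is correct and follows essentially the same route as the paper's proof: part \refitem{item:astara} comes directly from expanding the exponential in Lemma~\ref{lemma:FiberwiseWick} together with the reality of $\tauwick$ from Proposition~\ref{proposition:HermitianFedosov}, and part \refitem{item:starwickPositive} is then a direct application of Lemma~\ref{lemma:gIsReallyPositive} to the unital $^*$-algebra $\algebra{A}[[t]]$ combined with the Cauchy--Schwarz argument that the $\ring{C}[[t]]$-linear extension of a positive functional stays positive for the undeformed product. Your treatment is simply more explicit about the sign bookkeeping and the $t$-adic convergence than the paper's terse version, and your closing remark on complete positivity via $\Mat_n(\algebra{A})$ is a correct extra observation not contained in the theorem statement itself.
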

\begin{proof}
    From Lemma~\ref{lemma:FiberwiseWick} we immediately obtain
    \eqref{eq:astara}. Now let
    $\omega\colon \algebra{A} \longrightarrow \ring{C}$ be
    positive. Then also the $\ring{C}[[ t ]]$-linear extension
    $\omega\colon \algebra{A}[[ t ]] \longrightarrow
    \ring{C}[[ t ]]$
    is positive with respect to the undeformed product: this is a
    simple consequence of the Cauchy-Schwarz inequality for
    $\omega$. Then we apply Lemma~\ref{lemma:gIsReallyPositive} to
    conclude that $\omega(a^* \star a) \ge 0$.
\end{proof}
\begin{corollary}
    \label{corollary:PositiveTwist}%
    The Wick-type twist $\twist{F}_{\mathrm{\scriptscriptstyle Wick}}$
    in the Kähler situation is a convex series of positive elements.
\end{corollary}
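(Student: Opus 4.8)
The plan is to apply the explicit Wick normal form of Theorem~\ref{theorem:WickIsPositive} to the one algebra that reproduces the twist, namely $\algebra{A} = \Tensor^\bullet(\ueac{\lie{g}})$ equipped with its tensor-algebra $^*$-involution (the unique anti-automorphism extending $X \mapsto -X$ on $\ueac{\lie{g}}$), and then to quote Lemma~\ref{lemma:gIsReallyPositive}. Recall that by construction $\twist{F}_{\mathrm{\scriptscriptstyle Wick}} = 1 \starwick 1 = \sigma\bigl(\ms(\tauwick(1), \tauwick(1))\bigr) \in \ueac{\lie{g}}^{\tensor 2}[[t]]$ with $1 \in \ueac{\lie{g}}$ Hermitian, and that by Corollary~\ref{corollary:TwistHermitian} the element $\twist{F}_{\mathrm{\scriptscriptstyle Wick}}$ is Hermitian, i.e. $\twist{F}_{\mathrm{\scriptscriptstyle Wick}} = 1^* \starwick 1$.

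First I would establish, following verbatim the computation in the proof of Theorem~\ref{theorem:WickIsPositive}~\refitem{item:astara} but for $a = 1$ inside the tensor algebra, the identity
\[
    \twist{F}_{\mathrm{\scriptscriptstyle Wick}}
    =
    \sum_{m = 0}^\infty \frac{(2t)^m}{m!}
    \sum_{k_1, \ldots, k_m, \ell_1, \ldots, \ell_m = 1}^n
    g^{k_1\ell_1} \cdots g^{k_m\ell_m}\,
    u_{k_1 \ldots k_m}^* \tensor u_{\ell_1 \ldots \ell_m},
\]
where $u_{k_1 \ldots k_m} = \sigma\bigl(\inss(\cc{Z}_{k_1}) \cdots \inss(\cc{Z}_{k_m}) \tauwick(1)\bigr) \in \ueac{\lie{g}}[[t]]$ and $u^* \tensor u'$ denotes the product in $\Tensor^\bullet(\ueac{\lie{g}})$. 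The derivation is the same one: expand the fiberwise Wick product on the $\mathcal{W}$-factor by Lemma~\ref{lemma:FiberwiseWick}, use that $\sigma$ is real and that the conjugation on $\mathcal{W}_{\ring{C}}$ turns $\inss(Z_k)$ into $\inss(\cc{Z}_k)$ up to a sign which is absorbed together with the Hermitian coefficients $g^{k\ell}$ (satisfying $\cc{g^{k\ell}} = g^{\ell k}$). The one point needing a separate verification is the reality of $\tauwick$ on $\Tensor^\bullet(\ueac{\lie{g}})$, equivalently the Hermiticity of $\tauwick(1)$: since the $\lie{g}$-action used to build $L$ here is by left multiplication, I would check this directly on the recursion \eqref{eq:LowestDegOrdersTau}, using that $\delta^{-1}$, $D$, $\ad(\varrho)$ are real and that in $L$ the sign from $e_i^* = -e_i$ cancels the sign from the anti-Hermitian $e^i$ order by order; the resulting formula is then manifestly consistent with $\twist{F}_{\mathrm{\scriptscriptstyle Wick}}^* = \twist{F}_{\mathrm{\scriptscriptstyle Wick}}$.

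With this formula in hand the rest is immediate. For each fixed $m$ I apply Lemma~\ref{lemma:gIsReallyPositive} to the unital $^*$-algebra $\algebra{A} = \Tensor^\bullet(\ueac{\lie{g}})$ and the elements $a_{k_1 \ldots k_m} := u_{k_1 \ldots k_m} \in \ueac{\lie{g}} = \Tensor^1 \subseteq \Tensor^\bullet(\ueac{\lie{g}})$; since $a_{k_1\ldots k_m}^* a_{\ell_1 \ldots \ell_m}$ is precisely the concatenation $u_{k_1\ldots k_m}^* \tensor u_{\ell_1\ldots\ell_m} \in \Tensor^2$, the inner sum
\[
    p_m
    :=
    \sum_{k_1, \ldots, k_m, \ell_1, \ldots, \ell_m = 1}^n
    g^{k_1\ell_1} \cdots g^{k_m\ell_m}\,
    u_{k_1\ldots k_m}^* \tensor u_{\ell_1\ldots\ell_m}
\]
lies in $\bigl(\Tensor^\bullet(\ueac{\lie{g}})[[t]]\bigr)^+$ (passing to $\ring{C}[[t]]$-coefficients is harmless by Cauchy--Schwarz, exactly as in the proof of Theorem~\ref{theorem:WickIsPositive}). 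Hence $\twist{F}_{\mathrm{\scriptscriptstyle Wick}} = \sum_{m \ge 0} \frac{(2t)^m}{m!} p_m$ is a $t$-adically convergent series with non-negative coefficients of positive elements; since $\sum_{m \ge 0} \E^{-2t} \frac{(2t)^m}{m!} = 1$ with $\E^{-2t}$ a positive element of $\ring{R}[[t]]$, the renormalized series $\E^{-2t} \twist{F}_{\mathrm{\scriptscriptstyle Wick}} = \sum_{m \ge 0} \bigl(\E^{-2t} \tfrac{(2t)^m}{m!}\bigr) p_m$ is a genuine convex series of positive elements, whence so is $\twist{F}_{\mathrm{\scriptscriptstyle Wick}}$ up to the positive scalar $\E^{2t}$.

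I expect the only real work to sit in the first step: transporting the Wick normal form \eqref{eq:astara} to $a = 1$ in $\Tensor^\bullet(\ueac{\lie{g}})$, where one must confirm that $\tauwick(1)$ is Hermitian even though the $\lie{g}$-action on the tensor algebra is by left multiplication rather than a genuine $^*$-action. Everything afterwards is a direct appeal to Lemma~\ref{lemma:gIsReallyPositive} together with bookkeeping of the exponential coefficients.
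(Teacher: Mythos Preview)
The paper states this corollary without proof, treating it as an immediate consequence of Theorem~\ref{theorem:WickIsPositive} applied to the tensor algebra $\Tensor^\bullet(\ueac{\lie{g}})$ with $a = 1$. Your argument is precisely this filling-in, and your additional care---flagging that the left-multiplication action is not literally a $^*$-action and proposing to verify the Hermiticity of $\tauwick(1)$ directly, together with the $\E^{-2t}$ renormalization to obtain genuinely convex coefficients---goes beyond what the paper spells out.
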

\begin{remark}[Positive twist]
    \label{remark:HermitianPositiveWhatever}%
    Note that already for a Hermitian deformation, the twist
    $\twist{F} = 1 \star 1 = 1^* \star 1$ constructed as above is a
    \emph{positive} element of the deformed algebra
    $\Tensor^\bullet(\ueac{\lie{g}})[[ t ]]$. However, this seems to
    be not yet very significant: it is the statement of
    Corollary~\ref{corollary:PositiveTwist} and
    Theorem~\ref{theorem:WickIsPositive} which gives the additional
    and important feature of the corresponding universal deformation
    formula.
\end{remark}

%
%

\appendix

%
%

\section{Hochschild-Kostant-Rosenberg theorem}
\label{sec:HKR}

Let us define the map
\begin{align}
    \label{eq:HKRDiff}
    \partial\colon \uea{\lie{g}}\ni \xi
    \; \mapsto \;
    \xi \tensor 1 + 1 \tensor \xi - \Delta(\xi)
    \in \uea{\lie{g}}^{\tensor 2},
\end{align}
and extend it as a graded derivation of degree $+1$ of the tensor
product to $\Tensor^\bullet(\uea{\lie{g}})$.  We recall that the map
$\partial\colon \Tensor^\bullet(\uea{\lie
  g})\to\Tensor^\bullet(\uea{\lie{g}})$
is a differential. Its cohomology is described as follows:
\begin{theorem}[Hochschild-Kostant-Rosenberg]
    \label{thm:HKR}%
    Let $C\in\Tensor^p(\uea{\lie{g}})$ such that $\partial C = 0$.
    Then there is a $X\in\Anti^k\lie{g}$ and a
    $S\in \Tensor^{p-1}(\uea{\lie g})$ with
    \begin{equation}
        \label{eq:HKR}
        C
        =
        X + \partial S
    \end{equation}
    with $X = \mathrm{Alt}(C)$.
\end{theorem}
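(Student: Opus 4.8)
The plan is to recognise $(\Tensor^\bullet(\uea{\lie{g}}),\partial)$ as (a normalisation of) the cobar complex of the coalgebra $\uea{\lie{g}}$ and to compute its cohomology to be $\Anti^\bullet\lie{g}$, with the isomorphism realised by the inclusion $\lie{g}\hookrightarrow\uea{\lie{g}}$. First I would record that $\Anti^p\lie{g}\subseteq\Tensor^p(\lie{g})\subseteq\Tensor^p(\uea{\lie{g}})$ consists of cocycles: every $X\in\lie{g}$ is primitive, so $\partial X=0$, and since $\partial$ is a graded derivation it vanishes on all of $\Tensor^\bullet(\lie{g})$, in particular on $\Anti^p\lie{g}$. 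This gives a map $\Anti^\bullet\lie{g}\to\mathrm{H}^\bullet(\Tensor^\bullet(\uea{\lie{g}}),\partial)$, and the theorem asserts that it is bijective. Writing $\uea{\lie{g}}=\ring{R}\cdot 1\oplus\uea{\lie{g}}^+$, a routine normalisation handles the unit factors, so this cohomology coincides with that of the reduced cobar complex built on $\uea{\lie{g}}^+$ with differential $-\widetilde{\Delta}$ (the reduced coproduct) extended as a derivation of the tensor algebra.

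For the computation I would invoke the Poincaré--Birkhoff--Witt theorem. Since $\mathbb{Q}\subseteq\ring{R}$, the symmetrisation map $\Sym\lie{g}\to\uea{\lie{g}}$ is an isomorphism \emph{of coalgebras} onto $\uea{\lie{g}}$, where $\Sym\lie{g}$ carries the coproduct for which the elements of $\lie{g}$ are primitive (its binomial coproduct); equivalently, one may filter $\uea{\lie{g}}$ by PBW degree — the coproduct is filtered — and pass to the associated graded complex, which is the cobar complex of $\Sym\lie{g}$. In the filtration version one then notes that the resulting spectral sequence degenerates, because the classes $X_{1}\wedge\cdots\wedge X_{p}\in\Anti^{p}\lie{g}$ already lift to genuine cocycles of the full complex. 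Either way, it suffices to show that the cobar cohomology of $\Sym\lie{g}$ is $\Anti^\bullet\lie{g}$, realised by $\lie{g}\hookrightarrow\Sym\lie{g}$.

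To handle $\Sym\lie{g}$ I would use that, over a ring containing $\mathbb{Q}$, rescaling monomials by the inverses of the evident factorials identifies $\Sym\lie{g}$ (with its binomial coproduct) with the cofree conilpotent cocommutative coalgebra on $\lie{g}$; fixing a basis $e_{1},\dots,e_{n}$ this factorises as $\ring{R}[e_{1}]\tensor\cdots\tensor\ring{R}[e_{n}]$. A Künneth-type argument for the cobar construction reduces the problem to a single polynomial coalgebra $\ring{R}[e]$, for which one writes down the standard contracting homotopy on the reduced cobar complex: its cohomology is $\ring{R}$ in tensor degree $0$, the line of primitives $\ring{R}e$ in tensor degree $1$, and zero in all higher degrees. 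Tensoring these complexes yields $\Anti^\bullet\lie{g}=\Anti(\ring{R}e_1)\tensor\cdots\tensor\Anti(\ring{R}e_n)$ in the correct degrees, hence $\mathrm{H}^{p}(\Tensor^\bullet(\uea{\lie{g}}),\partial)\cong\Anti^{p}\lie{g}$ via $\lie{g}\hookrightarrow\uea{\lie{g}}$. In particular any $C\in\Tensor^{p}(\uea{\lie{g}})$ with $\partial C=0$ can be written $C=X+\partial S$ with $X\in\Anti^{p}\lie{g}$ and $S\in\Tensor^{p-1}(\uea{\lie{g}})$.

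It remains to identify $X$ with $\mathrm{Alt}(C)$, and the key point is that $\mathrm{Alt}\circ\partial=0$. Indeed $\partial$ acts on a factorising tensor by inserting, in each slot, the reduced coproduct $-\widetilde{\Delta}(\xi_{i})$, which is a flip-\emph{symmetric} $2$-tensor because $\uea{\lie{g}}$ is cocommutative; upon antisymmetrising the resulting $(p+1)$-tensors, the transposition of the two adjacent factors coming from that slot pairs terms of equal value and opposite sign, so everything cancels. Thus $\mathrm{Alt}$ annihilates coboundaries, and since $\mathrm{Alt}$ restricts to the identity on $\Anti^{p}\lie{g}\subseteq\Tensor^{p}(\uea{\lie{g}})$, we get $\mathrm{Alt}(C)=\mathrm{Alt}(X)+\mathrm{Alt}(\partial S)=X$. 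The main obstacle in this scheme is the one-variable cobar computation — equivalently the input $\mathrm{Cotor}_{\Sym\lie{g}}(\ring{R},\ring{R})\cong\Anti^\bullet\lie{g}$ — together with making the Künneth reduction and the normalisation of the unit factors precise at the chain level; the remaining steps are bookkeeping with the PBW filtration and with the signs of the derivation $\partial$.
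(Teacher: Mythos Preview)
Your argument is correct in outline and genuinely different from what the paper does. The paper does \emph{not} prove Theorem~\ref{thm:HKR} in general: immediately after stating it, the authors write that they only need the case $p=2$, and they establish that case by two short lemmas. First they show by a direct computation with the coassociativity relation that if $\partial C=0$ then the antisymmetric part $C-\mathtt{T}(C)$ already lies in $\lie{g}\wedge\lie{g}$; second they show that a \emph{symmetric} closed $C\in\Tensor^2(\uea{\lie{g}})$ is $\partial$-exact by an induction on the PBW filtration degree, peeling off the top filtration piece as a totally symmetric tensor and exhibiting an explicit primitive $G$ for it.

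Your route, by contrast, recognises $(\Tensor^\bullet(\uea{\lie{g}}),\partial)$ as (a normalisation of) the cobar complex, transports the problem to $\Sym\lie{g}$ via the PBW \emph{coalgebra} isomorphism, and then computes $\mathrm{Cotor}_{\Sym\lie{g}}(\ring{R},\ring{R})\cong\Anti^\bullet\lie{g}$ by a K\"unneth reduction to the one-variable case. This buys you the statement for all $p$ at once and explains structurally why the answer is $\Anti^\bullet\lie{g}$; the price is that the normalisation step (handling the unit $1\in\uea{\lie{g}}$, for which $\partial 1 = 1\tensor 1$) and the K\"unneth argument for the cobar of a tensor product of coalgebras need to be spelled out carefully, as you acknowledge. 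The paper's approach is more elementary and entirely self-contained for $p=2$, which is all that Section~\ref{sec:Classification} uses. Your identification $X=\mathrm{Alt}(C)$ via $\mathrm{Alt}\circ\partial=0$ from cocommutativity is cleaner than the paper's direct manipulation and works uniformly in $p$.
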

We do not prove the above Theorem in full generality, since we need
only the case $p = 2$. In this case the proof consists of the
following two lemmas:
\begin{lemma}
    \label{Lem:skew-exact}%
    Let $C\in\Tensor^2(\uea{\lie{g}})$ with $\partial C =0$.
    \begin{enumerate}
    \item One has $\partial\mathtt{T}(C) = 0$
    \item The antisymmetric part satisfies
        $C - \mathtt{T}(C) \in \lie{g} \wedge \lie{g}\subseteq
        \Tensor^2(\uea{\lie{g}})$
    \end{enumerate}
\end{lemma}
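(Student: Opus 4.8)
The plan is to prove part~\textit{i.)} by exploiting that $\uea{\lie{g}}$ is \emph{cocommutative}, i.e. $\mathtt{T}\circ\Delta=\Delta$ on $\uea{\lie{g}}$. First I would record that this makes $\partial\xi$ \emph{symmetric} for every $\xi\in\uea{\lie{g}}$: indeed $\mathtt{T}(\partial\xi)=\mathtt{T}(\xi\tensor 1+1\tensor\xi-\Delta\xi)=\xi\tensor 1+1\tensor\xi-\mathtt{T}(\Delta\xi)=\partial\xi$. Writing $C=\sum_i\xi_i\tensor\eta_i$ and letting $\mathtt{T}_3$ be the order-reversing flip on $\uea{\lie{g}}^{\tensor 3}$ (sending $x\tensor y\tensor z$ to $z\tensor y\tensor x$), the symmetry of each $\partial\xi_i$ and each $\partial\eta_i$ gives $\mathtt{T}_3(\partial\xi_i\tensor\eta_i)=\eta_i\tensor\partial\xi_i$ and $\mathtt{T}_3(\xi_i\tensor\partial\eta_i)=\partial\eta_i\tensor\xi_i$, so that
\begin{equation*}
  \mathtt{T}_3(\partial C)
  =
  \sum_i\bigl(\eta_i\tensor\partial\xi_i-\partial\eta_i\tensor\xi_i\bigr)
  =
  -\,\partial\mathtt{T}(C).
\end{equation*}
Since $\partial C=0$, applying $\mathtt{T}_3$ yields $\partial\mathtt{T}(C)=0$, proving part~\textit{i.)}.

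For part~\textit{ii.)} set $A=C-\mathtt{T}(C)$, so that $A$ is antisymmetric and, by part~\textit{i.)}, $\partial A=0$. The first step is to show $A\in\uea{\lie{g}}^{+}\tensor\uea{\lie{g}}^{+}$ with $\uea{\lie{g}}^{+}=\ker\epsilon$. Applying $\epsilon$ to the first tensor leg of the identity $\partial A=0\in\uea{\lie{g}}^{\tensor 3}$ and using the counit axiom, one gets $\Delta u=u\tensor 1$ with $u=(\epsilon\tensor\id)(A)$; applying $\epsilon\tensor\id$ to this forces $u\in\ring{R}\cdot 1$, say $u=c\,1$ with $c=(\epsilon\tensor\epsilon)(A)$. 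The analogous computation on the other leg gives $(\id\tensor\epsilon)(A)=c\,1$, while antisymmetry of $A$ gives $(\epsilon\tensor\id)(A)=-(\id\tensor\epsilon)(A)$; hence $2c\,1=0$, so $c=0$ in characteristic zero and $(\epsilon\tensor\id)(A)=0=(\id\tensor\epsilon)(A)$, i.e. $A\in\uea{\lie{g}}^{+}\tensor\uea{\lie{g}}^{+}$.

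The second and main step is a degree argument on the PBW filtration $F_{\bullet}$ of $\uea{\lie{g}}^{+}$, so $F_{1}=\lie{g}$ and the associated graded is the reduced symmetric algebra $\bigoplus_{n\ge 1}\Sym^{n}\lie{g}$ with its binomial (deconcatenation) coproduct; equip $\uea{\lie{g}}^{+}\tensor\uea{\lie{g}}^{+}$ with the induced filtration. If $A\in F_{1}\tensor F_{1}=\lie{g}\tensor\lie{g}$ we are done, as an antisymmetric element of $\lie{g}\tensor\lie{g}$ lies in $\lie{g}\wedge\lie{g}$. Otherwise pick the minimal $N\ge 3$ with $A$ of filtration degree $\le N$ and pass to its nonzero symbol $\bar{A}\in\bigoplus_{i+j=N,\ i,j\ge 1}\Sym^{i}\lie{g}\tensor\Sym^{j}\lie{g}$. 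Since $\partial$ is filtered of degree zero and induces on the associated graded exactly the reduced cobar (Koszul) differential $\bar\partial$ of the symmetric coalgebra, one has $\bar\partial\bar{A}=0$. By the classical Hochschild--Kostant--Rosenberg theorem for $\Sym\lie{g}$ (equivalently, Koszul duality between $\Sym\lie{g}$ and $\Anti\lie{g}$) the second cohomology of this complex is $\Anti^{2}\lie{g}$, concentrated in symmetric degree $2$; hence the symmetric-degree-$N$ cocycle $\bar{A}$ is a coboundary. But coboundaries coming from degree one are symmetric --- by the same cocommutativity argument as in part~\textit{i.)}, now for the cocommutative coalgebra $\Sym\lie{g}$ --- so an antisymmetric coboundary is zero, contradicting $\bar{A}\ne 0$. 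Therefore $N\le 2$ and $A=C-\mathtt{T}(C)\in\lie{g}\wedge\lie{g}$. (Alternatively, since the symmetrization map $\Sym\lie{g}\to\uea{\lie{g}}$ is an isomorphism of coalgebras in characteristic zero and is equivariant for the symmetric group actions, the claim reduces verbatim to the symmetric-algebra case.)

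I expect the main obstacle to be precisely this passage to the associated graded in part~\textit{ii.)}: one must check carefully that $\partial$ respects the PBW filtration and that the induced differential on $\Sym\lie{g}$ is the reduced cobar differential, and then either invoke the polynomial Hochschild--Kostant--Rosenberg statement or, if a self-contained argument is preferred, re-derive the vanishing of antisymmetric $\bar\partial$-cocycles of symmetric degree $\ge 3$ directly from the primitive elements theorem for $\Sym\lie{g}$ by induction on the degree. Part~\textit{i.)} is by contrast a one-line consequence of cocommutativity, and the reduction to the augmentation ideal in part~\textit{ii.)} is routine bookkeeping with the counit.
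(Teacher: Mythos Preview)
Your proof is correct, but part~\textit{ii.)} takes a genuinely different route from the paper. For part~\textit{i.)} the two arguments are close in spirit: the paper unwinds the closedness condition explicitly and applies flips and a cyclic permutation, while you package the same content as the one-line observation that cocommutativity makes each $\partial\xi$ symmetric, whence $\mathtt{T}_3(\partial C)=-\partial\mathtt{T}(C)$. For part~\textit{ii.)}, however, the paper avoids the PBW filtration entirely: it sets $Q=(\Delta\tensor\id)A-A_{23}-A_{13}$ and uses $\partial A=0$ together with an antisymmetrization trick to show $Q=0$, which says directly that $A$ is primitive in the first leg (and hence, by antisymmetry, in both), so $A\in\lie{g}\tensor\lie{g}$. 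This is a short, self-contained computation requiring no passage to the associated graded and no appeal to the polynomial HKR statement. Your argument, by contrast, trades elementarity for structure: the reduction to $\uea{\lie{g}}^{+}\tensor\uea{\lie{g}}^{+}$, the filtration bookkeeping, and the invocation of HKR for $\Sym\lie{g}$ all work, and the alternative you mention at the end (transporting the problem through the coalgebra isomorphism $\Sym\lie{g}\to\uea{\lie{g}}$) is perhaps the cleanest version of your approach. Either way your proof is sound; the paper's direct primitivity argument is simply shorter and more elementary.
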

\begin{proof}
    We have
    \begin{equation*}
        \partial C
        =
        0 \
        \Leftrightarrow \ C\tensor 1 + (\Delta\tensor \id)(C)
        =
        1\tensor C + (\id\tensor\Delta)(C).
    \end{equation*}
    Thus, we get
    \begin{align*}
        \mathtt{T}(C)\tensor 1
        &=
        (\mathtt{T}\tensor\id)(C\tensor 1)\\
        &=
        (\mathtt{T}\tensor\id)
        (1\tensor C + (\id\tensor\Delta)(C) - (\Delta\tensor \id)(C))
        \\
        &=
        C_{13}
        +
        (\mathtt{T}\tensor\id)(\id\tensor\Delta)(C)
        -
        (\Delta\tensor \id)(C).
    \end{align*}
    Now we apply the cyclic permutation to this equation and get
    \begin{equation*}
        1\tensor \mathtt{T}(C)
        =
        \mathtt{T}(C)\tensor 1
        +
        (\Delta\tensor \id)(\mathtt{T}(C))
        -
        (\id\tensor\Delta)(\mathtt{T}(C)),
    \end{equation*}
    which is equivalent to $\partial\mathtt{T}(C) = 0$.  Since
    $\partial$ is linear, we get $\partial(T-\mathtt{T}(C)) = 0$ and
    denote by $A = T - \mathtt{T}(C)$, which is now skew-symmetric.
    We define $Q = (\Delta \tensor\id)A - A_{23} - A_{13}$ and get
    with the fact that $A$ is $\partial$-closed that
    $Q = -\mathtt{Alt}(Q)$. Therefore we have
    $Q = \mathtt{Alt}^3 Q =(-1)^3 Q = -Q$ and we can conclude $Q =
    0$.
    Thus, $A$ has to be primitive in the first argument and with the
    skew-symmetry we get the same statement for the second argument.
\end{proof}
\begin{lemma}
    Let $C\in\Tensor^2(\uea{\lie{g}})$ with $\partial C = 0$. Then
    there exists a $S\in\uea{\lie{g}}$ and a
    $X\in\lie{g}\wedge \lie{g}$, such that
    \begin{equation}
        \label{eq:HKRInTwo}
        C
        =
        X + \partial S,
    \end{equation}
    where $X = \frac{1}{2}(C - \mathtt{T}(C))$.
\end{lemma}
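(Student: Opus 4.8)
The plan is to peel off the antisymmetric part using Lemma~\ref{Lem:skew-exact} and then to reduce the remaining, purely symmetric, statement to the classical Hochschild--Kostant--Rosenberg computation for the symmetric algebra.

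First I would set $X := \tfrac{1}{2}(C - \mathtt{T}(C))$. By Lemma~\ref{Lem:skew-exact} this lies in $\lie{g}\wedge\lie{g}$. Since $\partial$ is a graded derivation of the tensor product which vanishes on the primitive generators $\lie{g}\subseteq\uea{\lie{g}}$, it vanishes on the whole subalgebra of $\Tensor^\bullet(\uea{\lie{g}})$ generated by $\lie{g}$, in particular on $\Anti^\bullet\lie{g}$; hence $\partial X = 0$. As $\partial C = 0$ by hypothesis, linearity gives $\partial C_s = 0$ for the symmetric part $C_s := C - X = \tfrac{1}{2}(C + \mathtt{T}(C))$. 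So the statement is equivalent to the assertion that \emph{every symmetric $\partial$-closed $C_s \in \Tensor^2(\uea{\lie{g}})$ equals $\partial S$ for some $S \in \uea{\lie{g}}$}, the prescribed value $X = \tfrac12(C - \mathtt{T}(C))$ then being automatic.

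Next I would exploit that $\partial$ is built only from the unit, counit and coproduct of $\uea{\lie{g}}$. Because $\mathbb{Q}\subseteq\ring{R}$, the Poincaré--Birkhoff--Witt symmetrization map is a \emph{coalgebra} isomorphism $\beta\colon\Sym(\lie{g}) \xrightarrow{\ \sim\ } \uea{\lie{g}}$ respecting units and counits, so the maps $\beta^{\tensor k}$ carry $(\Tensor^\bullet(\uea{\lie{g}}),\partial)$ isomorphically onto $(\Tensor^\bullet(\Sym\lie{g}),\bar\partial)$ and commute with the flip $\mathtt{T}$; here $\bar\partial\xi = \xi\tensor 1 + 1\tensor\xi - \Delta_{\Sym}\xi$ and $\Sym(\lie{g})$ carries the coproduct making $\lie{g}$ primitive. (Equivalently one may simply filter $\uea{\lie{g}}$ by the PBW filtration and pass to the associated graded.) We may thus assume $\uea{\lie{g}} = \Sym(\lie{g})$, which buys the extra polynomial grading on which $\bar\partial$ is homogeneous: decompose $C_s = \sum_N C_s^{(N)}$ into homogeneous components, each separately symmetric and $\bar\partial$-closed, and solve $\bar\partial S^{(N)} = C_s^{(N)}$ for every $N$.

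The low polynomial degrees are immediate — $N=0$ gives $C_s^{(0)} = \lambda\, 1\tensor 1 = \bar\partial(\lambda\cdot 1)$, and a $\bar\partial$-closed $2$-cochain homogeneous of polynomial degree $1$ must vanish. For the general degree this is exactly the relevant case of the Hochschild--Kostant--Rosenberg theorem: up to harmless unit terms the complex $(\Tensor^\bullet(\Sym\lie{g}),\bar\partial)$ is the $\ring{R}$-linear dual of the bar resolution of $\ring{R}$ over the polynomial algebra $\Sym(\lie{g}^*)$, so its cohomology is $\Anti^\bullet\lie{g}$ with the class of a cocycle represented by its alternation; hence in tensor degree $2$ every closed cochain of polynomial degree $\ge 3$ is exact, and a symmetric closed cochain of polynomial degree $2$ is exact as well since its alternation vanishes. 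Summing the homogeneous primitives $S^{(N)}$ yields the desired $S$. The main obstacle is precisely this HKR-type vanishing in polynomial degrees $\ge 3$, and it is where characteristic zero enters, the positive-characteristic obstructions (Frobenius-type symmetric cocycles) being absent here; the remaining steps are routine bookkeeping with the PBW filtration and the Koszul signs.
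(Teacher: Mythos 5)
Your proposal is correct, but it takes a genuinely different route from the paper's. The reduction to the symmetric part is the same in both (you use $\partial X=0$ for $X\in\lie{g}\wedge\lie{g}$ together with Lemma~\ref{Lem:skew-exact}; the paper uses $\partial\mathtt{T}(C)=0$ from that lemma), but after that the paper never leaves $\uea{\lie{g}}$: it runs an explicit induction over the PBW filtration, showing that the top-order coefficients of a symmetric closed $C$ are primitive and totally symmetric and then subtracting $\partial G$ for a concretely constructed $G\in\uea{\lie{g}}$ of one order higher. You instead push the problem through the PBW symmetrization, which over $\mathbb{Q}\subseteq\ring{R}$ is a counital coalgebra isomorphism $\Sym(\lie{g})\to\uea{\lie{g}}$ and hence intertwines $\partial$ with $\bar\partial$ and commutes with $\mathtt{T}$, and then quote the classical computation of the cohomology of the resulting complex on $\Tensor^\bullet(\Sym\lie{g})$. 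This buys a shorter argument, the convenient polynomial grading (your degree $0$, $1$, $2$ checks are correct), and in fact the full Theorem~\ref{thm:HKR} in every tensor degree rather than only $p=2$. The cost is that the crux --- that this complex has cohomology $\Anti^\bullet\lie{g}$ with classes detected by alternation --- is exactly the statement the appendix sets out to verify by elementary means, and you import it from the literature; if you do so, the citation should be precise: the complex is the Cartier/cobar-type complex of the coalgebra $\Sym(\lie{g})$, i.e.\ the \emph{graded} dual (degreewise finite free, so this is harmless) of the bar complex computing $\mathrm{Tor}^{\Sym(\lie{g}^*)}_\bullet(\ring{R},\ring{R})$, evaluated via a homotopy equivalence with the Koszul resolution; it is not the full $\ring{R}$-linear dual of the bar resolution (that would produce the completed symmetric algebra) and not literally the Hochschild complex to which HKR refers. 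Note also that characteristic zero enters through the PBW coalgebra isomorphism (and the alternation argument in Lemma~\ref{Lem:skew-exact}), whereas the Koszul computation itself works over any commutative ring. These are matters of precision rather than gaps; with them spelled out your argument is complete, while the paper's proof remains the more self-contained and explicitly recursive one.
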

\begin{proof}
    It is clear from Lemma~\ref{Lem:skew-exact}, that $X$ is
    well-defined and we have to prove that symmetric $C$ are
    $\partial$-exact. So we assume that
    $C \in \Tensor^2(\uea{\lie{g}})$ is $\partial$-closed and
    symmetric. Let $k$ be the highest order appearing in $C$ and
    assume the claim is true for all $r < k$ (in the sense of the
    filtration of
    $\uea{\lie{g}} = \bigcup_{n\in \mathbb{N}_0}\uea{\lie{g}}_n$).
    The we can write for a given basis
    $\{e_i\}_{i \in \{1, \ldots, n\}}$
    \begin{equation*}
        C
        =
        \sum _{|\boldsymbol{i}|= k}
        e_{\boldsymbol{i}} \tensor D^{\boldsymbol{i}}
        +
        l.o.t. .
    \end{equation*}
    We mean lower order terms with respect to the filtration in the
    first tensor degree and $\boldsymbol{i}$ are multiindices, such
    that $e_{\boldsymbol{i}} = e_{i_1} \cdots e_{i_k}$. We can assume
    that $D_{\boldsymbol{i}}$ is symmetric in the multiindex, because
    we can compensate non-symmetricy by lower order terms.  Since
    $\partial (\uea{\lie{g}}_m)\subseteq
    \uea{\lie{g}}_{m-1}\tensor\uea{\lie{g}}_{m-1}$,
    we see that $\partial C = 0$ implies that
    $\partial D^{\boldsymbol{i}} = 0$, which is equivalent to
    $ D^{\boldsymbol{i}}\in \lie{g}$.  Therefore, we can write
    \begin{equation*}
        C
        =
        \sum _{|\boldsymbol{i}|= k}
        D^{\boldsymbol{i},j} e_{\boldsymbol{i}}\tensor e_j
        +
        H,
    \end{equation*}
    where $H\in\uea{\lie{g}}_{k-1} \tensor \uea{\lie{g}}$ is now of
    order strictly less then $k$ in the first argument. Now we expand
    $H = \sum _{|\boldsymbol{i}_1|,|\boldsymbol{i}_{2}|\leq k-1}
    H_{\boldsymbol{i}_1,\boldsymbol{i}_2} e_{\boldsymbol{i}_1} \tensor
    e_{\boldsymbol{i}_{2}}$ and see, by using
    \begin{align*}
        0
        &=
        \partial C \\
        &=
        \sum _{|\boldsymbol{i}|= k}
        D^{\boldsymbol{i},j} \partial(e_{\boldsymbol{i}}) \tensor e_j
        +
        \partial H \\
        &=
        - D^{i_1,\dots,i_k,j} \sum_r
        e_{i_1}
        \cdots \widehat{e_{i_r}} \cdots
        e_{i_k} \tensor e_{i_r} \tensor e_j
        +
        \partial H + l.o.t.,
    \end{align*}
    that $H$ has to be of the form
    \begin{equation*}
        H
        =
        \sum _{|\boldsymbol{i}_1|={k-1}, |\boldsymbol{i}_{2}|= 2}
        H_{\boldsymbol{i}_1,\boldsymbol{i}_2}
        e_{\boldsymbol{i}_1}\tensor e_{\boldsymbol{i}_{2}}
        +
        l.o.t.,
    \end{equation*}
    and hence
    \begin{equation*}
        \partial H
        =
        \sum _{|\boldsymbol{i}_1|={k-1},j_1,j_2} H_{\boldsymbol{i}_1,j_1,j_2}
        e_{\boldsymbol{i}_1}\tensor e_{j_1}\tensor e_{j_2}+l.o.t..
    \end{equation*}
    This implies, that $D^{i_1, \ldots, i_k, j}$ is symmetric in all
    indices, since $\partial C = 0$ and
    $H_{\boldsymbol{i}_1,j_1,j_2} =
    H_{\boldsymbol{i}_1,j_2,j_1}$. Thus for
    \begin{equation*}
        G
        =
        \frac{1}{k+1} D^{i_1,\dots,i_{k+1}}e_{i_1}\cdots e_{i_{k+1}}
    \end{equation*}
    we have
    \begin{equation*}
        \partial G
        =
        - \sum _{|\boldsymbol{i}|= k}
        D^{\boldsymbol{i},j}
        (e_{\boldsymbol{i}}\tensor e_j+e_j\tensor e_{\boldsymbol{i}})
        +
        l.o.t..
    \end{equation*}
    Note that here the lower order terms are meant in both tensor
    arguments. Using the symmetry of $C$, we obtain
    \begin{equation*}
        C
        =
        \sum _{|\boldsymbol{i}|= k}
        D^{\boldsymbol{i},j}
        (e_{\boldsymbol{i}}\tensor e_j+e_j\tensor e_{\boldsymbol{i}})
        +
        l.o.t.,
    \end{equation*}
    again the lower order terms are in both tensor factors. Thus,
    \begin{align*}
        C + \partial G
        \in
        \uea{\lie{g}}_{k-1}\tensor\uea{\lie{g}}_{k-1}.
    \end{align*}
    This implies the Lemma, because for $k = 0$ the statement is
    trivial.
\end{proof}
\begin{corollary}
    Let $C \in \Tensor^2(\uea{\lie{g}})$ with $\partial C = 0$ and
    $(\epsilon\tensor\id)C = (\id\tensor\epsilon)C = 0$ , then we can
    find $S \in \uea{\lie{g}}$ and $X \in \Anti^2\lie g$, such that
    $C = X + \partial S$ with $\epsilon(S) = 0$.
\end{corollary}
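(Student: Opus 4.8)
The plan is to derive the statement directly from the preceding Lemma, which already provides the skew-symmetric part $X = \frac{1}{2}(C - \mathtt{T}(C)) \in \Anti^2\lie{g}$ together with an element $S \in \uea{\lie{g}}$ satisfying $C = X + \partial S$. The only thing that remains is to check that this $S$ automatically has vanishing counit, so that no further correction is needed.

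First I would apply $\epsilon \tensor \id$ to the identity $C = X + \partial S$. On the left-hand side $(\epsilon \tensor \id)C = 0$ is one of the hypotheses. For the term $X$, note that $X$ lies in $\lie{g} \wedge \lie{g} \subseteq \lie{g} \tensor \lie{g}$, and every element of $\lie{g}$ is primitive, so $\epsilon$ vanishes on it; hence $(\epsilon \tensor \id)X = 0$ as well. Next I would evaluate $(\epsilon \tensor \id)\partial S$ straight from the definition $\partial S = S \tensor 1 + 1 \tensor S - \Delta(S)$: using $\epsilon(1) = 1$ and the counit axiom $(\epsilon \tensor \id)\circ\Delta = \id$, the last two terms cancel and what is left is $\epsilon(S)\,1$. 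Combining the three computations yields $0 = \epsilon(S)\, 1$ in $\uea{\lie{g}}$, and since $\uea{\lie{g}}$ contains $\ring{R}\cdot 1$ as a free submodule this forces $\epsilon(S) = 0$. Thus the very same $S$ produced by the Lemma already does the job, with $\epsilon(S) = 0$ and $X = \mathtt{Alt}(C) \in \Anti^2\lie{g}$.

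I do not expect a genuine obstacle here: the corollary is essentially a bookkeeping consequence of the Lemma together with the coalgebra axioms. The one subtlety worth flagging is that one must use the skew-symmetric representative $X$ (rather than some other $X' \in \Anti^2\lie{g}$ differing from it by a $\partial$-exact term), because it is precisely for $X \in \lie{g} \wedge \lie{g}$ that $(\epsilon \tensor \id)X$ vanishes on the nose; conversely, trying to repair a nonzero counit of $S$ by subtracting a scalar multiple of $1$ would not work, since $\partial(1) = 1 \tensor 1$ would then reappear and spoil the equation. One could equally run the argument with $\id \tensor \epsilon$ and the companion hypothesis $(\id \tensor \epsilon)C = 0$, which is why both conditions are stated even though either one suffices.
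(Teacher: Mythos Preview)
Your argument is correct and, if anything, cleaner than what the paper gestures at. The paper's one-line proof refers back to ``the construction'' of the preceding lemma, meaning one should trace through the inductive building of $S$: each correction $G$ added there is a product of at least two basis elements of $\lie{g}$, so $\epsilon(G)=0$ term by term, and the extra hypothesis $(\epsilon\tensor\id)C=0$ is what kills the possible scalar remainder at the base of the induction. Your approach bypasses the construction entirely and shows that \emph{any} $S$ with $C = X + \partial S$ and $X \in \Anti^2\lie{g}$ must satisfy $\epsilon(S)=0$ once $(\epsilon\tensor\id)C=0$; this is more robust since it does not depend on how $S$ was obtained. Your closing remark that one cannot fix a nonzero counit by subtracting a scalar (because $\partial(1) = 1\tensor 1 \ne 0$) is correct but, as your own computation already shows, superfluous.
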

\begin{proof}
    The statement is clear from the construction of
    Lemma~\ref{Lem:skew-exact}.
\end{proof}

%
%

\section{Technical Lemmas}

In this section we prove several technical results, necessary for the
proofs is Section~\ref{sec:Classification}.
\begin{lemma}
    \label{lem:ExactnessDifference}%
    Let
    $\twist{F}, \twist{F}' \in (\uea{\lie{g}}\tensor \uea{\lie
      g})[[t]]$ be two twists coinciding up to order $k$. Then
    \begin{equation}
        \label{eq:DiffTwistsClosed}
        \partial (F_{k+1} - F'_{k+1}) = 0.
    \end{equation}
\end{lemma}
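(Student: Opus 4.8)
The plan is to expand the cocycle condition \eqref{eq:TwistConditionI} order by order in $t$ and to compare the two twists at order $k+1$. Write $\twist{F} = 1 \tensor 1 + \sum_{j\geq 1} t^j F_j$ and $\twist{F}' = 1 \tensor 1 + \sum_{j\geq 1} t^j F'_j$, so that the hypothesis reads $F_j = F'_j$ for all $j \leq k$, with $F_0 = F'_0 = 1\tensor 1$.

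First I would write $(\Delta\tensor\id)(\twist{F})(\twist{F}\tensor 1) = \sum_n t^n \sum_{a+b=n}(\Delta\tensor\id)(F_a)(F_b\tensor 1)$ and isolate the coefficient of $t^{k+1}$. The two extreme summands, namely $a = k+1$ (with $F_0 = 1\tensor 1$) and $b = k+1$ (with $F_0 = 1\tensor 1$), contribute $(\Delta\tensor\id)(F_{k+1}) + F_{k+1}\tensor 1$; every remaining summand is a product $(\Delta\tensor\id)(F_a)(F_b\tensor 1)$ with $1 \leq a, b \leq k$. Treating the right-hand side of \eqref{eq:TwistConditionI} the same way yields $(\id\tensor\Delta)(F_{k+1}) + 1\tensor F_{k+1}$ together with summands involving only $F_a, F_b$ with $1 \leq a, b \leq k$. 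Subtracting the order-$(k+1)$ identity for $\twist{F}'$ from the one for $\twist{F}$, all summands with $1 \leq a, b \leq k$ cancel because there $F_a = F'_a$ and $F_b = F'_b$, and only the extreme terms survive. Hence, writing $G := F_{k+1} - F'_{k+1} \in \uea{\lie{g}}^{\tensor 2}$, we are left with the linear identity
\begin{equation*}
    (\Delta\tensor\id)(G) + G\tensor 1
    =
    (\id\tensor\Delta)(G) + 1\tensor G .
\end{equation*}

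Next I would identify the difference of the two sides of this identity with $\partial G$. From the definition \eqref{eq:HKRDiff} and the graded Leibniz rule $\partial(\xi\tensor\eta) = (\partial\xi)\tensor\eta - \xi\tensor(\partial\eta)$ for $\xi,\eta\in\uea{\lie{g}}$, a direct computation on factorizing elements, using the leg-numbering identifications $G_{12} = G\tensor 1$ and $G_{23} = 1\tensor G$, gives
\begin{equation*}
    \partial G
    =
    1\tensor G - G\tensor 1 - (\Delta\tensor\id)(G) + (\id\tensor\Delta)(G) .
\end{equation*}
Comparing the two displays shows that the surviving order-$(k+1)$ identity is exactly $\partial G = 0$, which is \eqref{eq:DiffTwistsClosed}.

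The argument is essentially bookkeeping; the only point requiring care is the sign in the graded Leibniz rule for $\partial$ on $\Tensor^2(\uea{\lie{g}})$ together with the matching of leg-numbering conventions to the derivation-extension convention fixed for $\partial$ in Appendix~\ref{sec:HKR}, after which the computation of $\partial G$ above is immediate and the rest follows. (If in addition one wants a $\partial$-primitive $S$ with $\epsilon(S) = 0$, as in the corollary to Theorem~\ref{thm:HKR}, one also expands the normalization \eqref{eq:TwistConditionII} at order $k+1 \geq 1$ to obtain $(\epsilon\tensor\id)G = (\id\tensor\epsilon)G = 0$; this is not needed for the statement as given.)
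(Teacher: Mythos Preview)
Your proof is correct and follows essentially the same approach as the paper: both expand the twist cocycle condition \eqref{eq:TwistConditionI} at order $k+1$, separate the extreme summands (which assemble into $\partial F_{k+1}$ via the formula $\partial G = 1\tensor G - G\tensor 1 - (\Delta\tensor\id)(G) + (\id\tensor\Delta)(G)$) from the cross terms involving only $F_1,\ldots,F_k$, and use $F_i = F'_i$ for $i\le k$ to cancel the latter. The only organizational difference is that the paper shows $\partial F_{k+1} = (\text{cross terms in }F_{\le k}) = (\text{cross terms in }F'_{\le k}) = \partial F'_{k+1}$ directly, whereas you subtract the two order-$(k+1)$ identities first and then identify the result as $\partial G = 0$; these are the same computation.
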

\begin{proof}
    We have
    \begin{align*}
        \partial(F_{k+1})
        &=
        1\tensor F_{k+1}
        - F_{k+1}\tensor 1
        + (\id\tensor\Delta)(F_{k+1})
        - (\Delta\tensor \id)(F_{k+1})\\
        &=
        \sum_{i=0}^{k+1}
        (1\tensor F_{i})(\id\tensor\Delta)(F_{k+1-i})
        -
        \sum_{i=1}^k
        (1\tensor F_{i})(\id\tensor\Delta)(F_{k+1-i})\\
        &\quad+
        \sum_{i=1}^k
        (F_{i}\tensor 1)(\Delta\tensor \id)(F_{k+1-i})
        -
        \sum_{i=0}^{k+1}
        (F_{i}\tensor 1)(\Delta\tensor \id)(F_{k+1-i})
        \\
        &=
        -
        \sum_{i=1}^k
        (1\tensor F_{i})(\id\tensor\Delta)(F_{k+1-i})
        +
        \sum_{i=1}^k
        (F_{i}\tensor 1)(\Delta\tensor \id)(F_{k+1-i})\\
        &=
        -\sum_{i=1}^k
        (1\tensor F'_{i})(\id\tensor\Delta)(F'_{k+1-i})
        +
        \sum_{i=1}^k
        (F'_{i}\tensor 1)(\Delta\tensor \id)(F'_{k+1-i})\\
        &=
        \partial(F'_{k+1}).
    \end{align*}
\end{proof}
\begin{lemma}
    \label{lem:EquivalenceOrder}%
    Let
    $\twist{F}, \twist{F}' \in (\uea{\lie{g}}\tensor \uea{\lie
      g})[[t]]$ be two twists coinciding up to order $k$, such that
    \begin{equation}
        \label{eq:DiffTwistsTkplusOne}
        F_{k+1}-F'_{k+1} = \partial T_{k+1}.
    \end{equation}
    Then they are equivalent up to order $k+1$.
\end{lemma}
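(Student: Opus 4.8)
The plan is to produce the equivalence transformation directly from the primitive $T_{k+1}$. Since $\twist{F}$ and $\twist{F}'$ already agree through order $k$, the correcting element $S \in \uea{\lie{g}}[[t]]$ should differ from $1$ only in order $k+1$, so I would take the ansatz $S = 1 - t^{k+1} T_{k+1}$. Two things must be checked before using $S$ in \eqref{eq:EquivalentTwist}: invertibility and the counit normalization. Invertibility is immediate since $S = 1 + \mathcal{O}(t)$, with $S^{-1} = 1 + t^{k+1} T_{k+1} + \mathcal{O}(t^{2k+2})$. For $\epsilon(S) = 1$ I would argue that $\epsilon(T_{k+1}) = 0$: applying $(\epsilon \tensor \id)$ to the twist conditions \eqref{eq:TwistConditionII} order by order gives $(\epsilon \tensor \id)(F_{k+1}) = 0 = (\epsilon \tensor \id)(F'_{k+1})$, while $(\epsilon \tensor \id) \circ \partial$ coincides with the map $\xi \mapsto \epsilon(\xi)\,1$ by the counit axiom; hence $\epsilon(T_{k+1})\, 1 = (\epsilon \tensor \id)(\partial T_{k+1}) = (\epsilon \tensor \id)(F_{k+1} - F'_{k+1}) = 0$, so $\epsilon(T_{k+1}) = 0$ and $\epsilon(S) = 1$.

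With $S$ in hand, I would then compute the equivalent twist $\twist{F}'' := \Delta(S)^{-1} \twist{F} (S \tensor S)$ — which is a genuine twist by the remark around \eqref{eq:NewTwist} — to order $k+1$ and compare it with $\twist{F}'$. Expanding $\Delta(S)^{-1} = 1 \tensor 1 + t^{k+1} \Delta(T_{k+1}) + \mathcal{O}(t^{k+2})$ and $S \tensor S = 1 \tensor 1 - t^{k+1}(T_{k+1} \tensor 1 + 1 \tensor T_{k+1}) + \mathcal{O}(t^{2k+2})$, and using $F_0 = 1 \tensor 1$, all cross terms are of order $\geq k+2$ and one gets $\twist{F}'' = \twist{F} + t^{k+1}\bigl(\Delta(T_{k+1}) - T_{k+1} \tensor 1 - 1 \tensor T_{k+1}\bigr) + \mathcal{O}(t^{k+2}) = \twist{F} - t^{k+1} \partial T_{k+1} + \mathcal{O}(t^{k+2})$. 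Since $\twist{F}$ and $\twist{F}'$ coincide through order $k$, we have $\twist{F}' = \twist{F} - t^{k+1}(F_{k+1} - F'_{k+1}) + \mathcal{O}(t^{k+2})$, and the hypothesis $F_{k+1} - F'_{k+1} = \partial T_{k+1}$ then yields $\twist{F}'' \equiv \twist{F}' \pmod{t^{k+2}}$. This is exactly the assertion that $\twist{F}$ and $\twist{F}'$ are equivalent up to order $k+1$.

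This is a first-order perturbation argument and carries no serious obstacle; the clause $F_{k+1} - F'_{k+1} = \partial T_{k+1}$ in the hypothesis is precisely the cohomological condition that makes the order-$(k+1)$ matching solvable (cf. Lemma~\ref{lem:ExactnessDifference}, which guarantees the difference is at least $\partial$-closed in general). The one point needing a little attention — and the one I would flag as the main subtlety — is that the candidate $S$ must respect the counit normalization $\epsilon(S) = 1$ built into the definition of equivalence; this is handled, as above, by the observation that $\epsilon(T_{k+1}) = 0$, which follows from the normalization of the twists rather than from $\partial$-exactness. One should also note that all orders $\leq k$ of $\twist{F}''$ are automatically unchanged because $S - 1 \in t^{k+1}\uea{\lie{g}}[[t]]$.
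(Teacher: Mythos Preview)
Your proof is correct and follows essentially the same route as the paper: both take $S = 1 \pm t^{k+1}T_{k+1} + \mathcal{O}(t^{k+2})$ (the paper writes $\exp(t^{k+1}T_{k+1})$, which agrees with your $S^{\pm 1}$ to the relevant order) and verify the equivalence relation \eqref{eq:EquivalentTwist} at order $k+1$, where it reduces exactly to $F_{k+1} - F'_{k+1} = \partial T_{k+1}$. Your handling of the counit normalization is in fact slightly cleaner than the paper's: you show directly that $(\epsilon \tensor \id)\circ\partial$ sends $\xi \mapsto \epsilon(\xi)\,1$, forcing $\epsilon(T_{k+1}) = 0$ automatically, whereas the paper only remarks that $T_{k+1}$ \emph{can be chosen} with $\epsilon(T_{k+1}) = 0$.
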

\begin{proof}
    Consider
    $\exp(t^{k+1} T_{k+1}) = 1+t^{k+1} T_{k+1} +
    \mathcal{O}(t^{k+2})$. Then we have
    \begin{align*}
        (\Delta(\exp(t^{k+1} T_{k+1}))\twist{F})_i
        =
        \left(
            \twist{F}'
            \left(
                \exp(t^{k+1} T_{k+1})\tensor\exp(t^{k+1} T_{k+1})
            \right)
        \right)_i
    \end{align*}
    for any $i \leq k+1$.  Note that, because
    $(\epsilon\tensor\id)(F_{k+1} - F'_{k+1}) =
    (\id\tensor\epsilon)(F_{k+1}-F'_{k+1}) = 0$,
    we can choose $T_{k+1}$, such that $\epsilon (T_{k+1}) = 0$ and
    therefore $\epsilon(\exp(t^{k+1} T_{k+1})) = 1$.
\end{proof}
\begin{lemma}
  \label{lem:EquivalenceOfTwists}%
  Let
  $\twist{F}, \twist{F}' \in (\uea{\lie{g}}\tensor
  \uea{\lie{g}})[[t]]$
  be two equivalent twists coinciding up to order $k$. Then there
  exists a
  $T = 1 + t^k T_k + \mathcal{O}(t^{k+1}) \in \uea{\lie{g}}[[t]]$,
  such that
  \begin{equation}
      \label{eq:DeltaTTwistFprime}
      \Delta(T)\twist{F}'
      =
      \twist{F}(T\tensor T).
  \end{equation}
\end{lemma}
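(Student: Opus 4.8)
The plan is to upgrade the given equivalence $S$ to one that is trivial up to order $k$ by a bootstrapping argument which improves, one step at a time, the order to which a twist equivalent to $\twist{F}$ agrees with $\twist{F}'$. First I would construct inductively twists $\twist{F} = \twist{G}^{(0)}, \twist{G}^{(1)}, \twist{G}^{(2)}, \dots$ together with invertible $T^{(j)} \in \uea{\lie{g}}[[t]]$ satisfying $\epsilon(T^{(j)}) = 1$, $T^{(j)} = 1 + \mathcal{O}(t^k)$ and $T^{(j+1)} - T^{(j)} = \mathcal{O}(t^{k+j})$, such that $\twist{G}^{(j)} = \Delta(T^{(j)})^{-1}\twist{F}(T^{(j)} \tensor T^{(j)})$ and such that $\twist{G}^{(j)}$ coincides with $\twist{F}'$ up to order $k+j$. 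The base case $\twist{G}^{(0)} = \twist{F}$, $T^{(0)} = 1$ is precisely the hypothesis. Since $(T^{(j)})_j$ is $t$-adically Cauchy and $(\twist{F},T) \mapsto \Delta(T)^{-1}\twist{F}(T\tensor T)$ is $t$-adically continuous, the limit $T := \lim_j T^{(j)}$ satisfies $\epsilon(T) = 1$, $T = 1 + \mathcal{O}(t^k)$ and $\Delta(T)^{-1}\twist{F}(T\tensor T) = \lim_j \twist{G}^{(j)} = \twist{F}'$, i.e.\ $\Delta(T)\twist{F}' = \twist{F}(T\tensor T)$, which is the claim.

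For the inductive step $j \rightsquigarrow j+1$, put $N = k+j$. Both $\twist{G}^{(j)}$ and $\twist{F}'$ are equivalent to $\twist{F}$, hence to each other, and they coincide up to order $N$, so Lemma~\ref{lem:ExactnessDifference} gives $\partial\big(\twist{G}^{(j)}_{N+1} - \twist{F}'_{N+1}\big) = 0$; moreover $(\epsilon\tensor\id)$ and $(\id\tensor\epsilon)$ annihilate this difference by the normalisation \eqref{eq:TwistConditionII}. By the Hochschild--Kostant--Rosenberg decomposition in degree two (the Corollary to Theorem~\ref{thm:HKR}) we may thus write $\twist{G}^{(j)}_{N+1} - \twist{F}'_{N+1} = X + \partial U$ with $X = \mathrm{Alt}\big(\twist{G}^{(j)}_{N+1} - \twist{F}'_{N+1}\big) \in \Anti^2\lie{g}$ and $\epsilon(U) = 0$. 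I would first remove the skew part $X$: conjugating $\twist{G}^{(j)}$ by the group-like element $\exp(t^N P)$ with $P \in \lie{g}$ leaves orders $\le N$ unchanged and modifies the order-$(N+1)$ coefficient by $-[\Delta(P),\twist{G}^{(j)}_1]$, whose antisymmetrisation equals $\tfrac12\Schouten{r,P}$ up to sign (using that the antisymmetric part of $\twist{G}^{(j)}_1$ is the $r$-matrix and cocommutativity of $\uea{\lie{g}}$). Since $\Schouten{r,\argument}$ corresponds under $\flat$ to $\delta_\CE$, such a $P$ cancelling $X$ exists as soon as $X^\flat$ is $\delta_\CE$-exact. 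After this conjugation the order-$(N+1)$ difference with $\twist{F}'$ is purely $\partial$-exact, say $\partial T_{N+1}$ with $\epsilon(T_{N+1})=0$, and Lemma~\ref{lem:EquivalenceOrder} (in the form of conjugation by $\exp(t^{N+1}T_{N+1})$) produces a twist coinciding with $\twist{F}'$ up to order $N+1$. Composing these two conjugations onto $T^{(j)}$ defines $T^{(j+1)}$; the new factors are $1 + \mathcal{O}(t^N)$ with $N \ge k$ and have counit $1$, so $T^{(j+1)} = 1 + \mathcal{O}(t^k)$, $\epsilon(T^{(j+1)}) = 1$ and $T^{(j+1)} - T^{(j)} = \mathcal{O}(t^N)$, as required.

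The remaining point — and the genuine obstacle — is that the HKR skew part $X^\flat$ is always $\delta_\CE$-exact. This is exactly where the \emph{existence} of the full equivalence $S$ (not just of an approximate one) enters: $\twist{G}^{(j)}$ and $\twist{F}'$ are actually equivalent, via some $V = 1 + \mathcal{O}(t)$ with $\Delta(V)\twist{F}' = \twist{G}^{(j)}(V \tensor V)$, and I would expand this identity to order $t^{N+1}$. The contribution $\partial V_{N+1}$ has vanishing antisymmetrisation by cocommutativity, and the task is to show that the antisymmetrisation of the remaining ``mixed'' terms — products of the lower coefficients $V_1,\dots,V_N$ with $\twist{G}^{(j)}_1,\dots,\twist{G}^{(j)}_N$ — becomes, after applying $\flat$, a $\delta_\CE$-coboundary. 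I expect this skew-symmetrised cocycle computation to be the hard part of the proof: one must reorganise all the mixed terms into $\delta_\CE$ of an explicit $1$-cochain built from the skew parts of the $V_i$, the inputs being the twist cocycle relation \eqref{eq:TwistConditionI}, the Jacobi identity, and the closedness $\delta_\CE\omega = 0$ of the symplectic form. This is, of course, precisely the statement of Lemma~\ref{lem:SkewsymmetricTwistExact}, which one may alternatively invoke directly.
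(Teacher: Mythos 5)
Your overall strategy is not the paper's, and it contains a genuine gap at precisely the point you flag as ``the hard part''. Your induction needs, at each step, that the skew-symmetric (HKR) part of $\twist{G}^{(j)}_{N+1} - \twist{F}'_{N+1}$ becomes $\delta_\CE$-exact under $\flat$, so that it can be removed by conjugation with $\exp(t^N P)$, $P \in \lie{g}$. That exactness statement is exactly Lemma~\ref{lem:SkewsymmetricTwistExact}, and in this paper that lemma is proved \emph{from} Lemma~\ref{lem:EquivalenceOfTwists}: its proof first invokes the present lemma to replace the given equivalence by one of the form $T = 1 + t^k T_k + \mathcal{O}(t^{k+1})$, and only then does the order-$(k+1)$ expansion collapse to the single commutator $[T_k \tensor 1 + 1 \tensor T_k, r] = -(\delta_\CE T_k^\flat)^\sharp$-type term. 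So ``invoking Lemma~\ref{lem:SkewsymmetricTwistExact} directly'' is circular, and your alternative — expanding $\Delta(V)\twist{F}' = \twist{G}^{(j)}(V \tensor V)$ with an equivalence $V = 1 + \mathcal{O}(t)$ whose lowest order is uncontrolled and reorganising all mixed terms of orders $1,\dots,N$ into an explicit $\delta_\CE$-coboundary — is never carried out; it is essentially equivalent in difficulty to the lemma itself, since the whole point of the lemma is to bootstrap the equivalence so that this expansion becomes tractable.

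The paper's proof avoids this entirely by never modifying the twists: it improves the given equivalence element itself. If $\tilde{T} = 1 + t^\ell \tilde{T}_\ell + \cdots$ with $\ell < k$ intertwines $\twist{F}$ and $\twist{F}'$, then comparing orders $\ell$ and $\ell+1$ of $\Delta(\tilde{T})\twist{F}' = \twist{F}(\tilde{T}\tensor\tilde{T})$ and using $F_i = F'_i$ for $i \le k$ shows that $\tilde{T}_\ell$ is primitive, hence in $\lie{g}$, and that $\tilde{T}_\ell \tensor 1 + 1 \tensor \tilde{T}_\ell$ commutes with $r$, i.e. $\delta_\CE \tilde{T}_\ell^\flat = 0$ — only \emph{closedness}, no exactness, is needed. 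From the closed one-form $\tilde{T}_\ell^\flat$ one builds the derivation $S$ of $\uea{\lie{g}}$ and the element $A = \frac{1}{t}(\epsilon \circ S \tensor \id)[\twist{F}] = -\tilde{T}_\ell + \mathcal{O}(t)$ with $\epsilon(A) = 0$ and $\Delta(A)\twist{F} = \twist{F}(A \tensor 1 + 1 \tensor A)$; then $\exp(t^\ell A)$ is a self-equivalence of $\twist{F}$, and $T = \exp(t^\ell A)\tilde{T}$ is a new equivalence between the \emph{same} twists starting at order $\ell + 1$. Iterating $k - \ell$ times gives the claim. To salvage your approach you would have to supply an independent proof of the exactness of the skew part for equivalent twists whose equivalence may start at low order; as written, the proposal defers the essential difficulty either to an unperformed computation or to a circular citation.
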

\begin{proof}
    Since the twists $\twist{F}$ and $\twist{F}'$ are equivalent,
    there is a
    $\tilde{T} = 1 + t^\ell \tilde{T}_\ell + \mathcal{O}(t^{\ell+1})$,
    such that
    \begin{equation*}
        \Delta(\tilde{T})F'
        =
        F(\tilde{T} \tensor \tilde{T}).
    \end{equation*}
    Let us consider $\ell \leq k$. The above equation at order $\ell$
    reads
    \begin{equation*}
        \Delta(\tilde{T}_\ell) + F'_\ell
        =
        F_\ell + \tilde{T}_\ell \tensor 1 + 1 \tensor \tilde{T}_\ell.
    \end{equation*}
    Therefore, since $\twist{F}$ and $\twist{F}'$ coincide up to order
    $k$ we have
    \begin{equation*}
        \Delta(\tilde{T}_\ell)
        =
        \tilde{T}_\ell \tensor 1 + 1 \tensor \tilde{T}_\ell,
    \end{equation*}
    and we have $\tilde{T}_\ell \in \lie{g} \subseteq \uea{\lie{g}}$.
    For $\ell < k$ we get at order $\ell+1$
    \begin{equation*}
        \Delta(\tilde{T}_{\ell+1})
        +
        \Delta(\tilde{T}_\ell) F'_1 + F'_{\ell+1}
        =
        F_{\ell+1}
        +
        F_1 (\tilde{T}_\ell \tensor 1 + 1 \tensor \tilde{T}_\ell)
        +
        \tilde{T}_{\ell+1} \tensor 1 + 1 \tensor \tilde{T}_{\ell+1}.
    \end{equation*}
    The skew-symmetrization of the above equation gives
    \begin{equation*}
        (\tilde{T}_\ell \tensor 1 + 1 \tensor \tilde{T}_\ell) r
        =
        r (\tilde{T}_\ell \tensor 1 + 1 \tensor \tilde{T}_\ell).
    \end{equation*}
    An easy computation shows that this property is equivalent to
    $\delta_\CE \tilde{T}_\ell^\flat = 0$. Thus, we can define the map
    $S\colon \uea{\lie{g}} \longrightarrow \uea{\lie{g}}$ by defining
    it on primitive elements via
    \begin{equation*}
        \lie{g} \ni \xi
        \; \mapsto \;
        \tilde{T}_\ell^\flat(\xi) \cdot 1 \in \uea{\lie{g}}
    \end{equation*}
    and extend it as a derivation of the product of
    $\uea{\lie{g}}$. This map allows us to define an element
    \begin{equation*}
        A
        =
        \frac{1}{t}(\epsilon\circ S\tensor \id) [\twist{F}]
        =
        -\tilde{T}_\ell + \mathcal{O}(t),
    \end{equation*}
    which fulfills
    $\Delta(A)\twist{F} = \twist{F}(A \tensor 1 + 1 \tensor A)$ and
    $\epsilon(A)=0$.  Thus we get
    \begin{equation*}
        \exp(t^\ell A)\twist{F}
        =
        \twist{F}(\exp(t^\ell A)\tensor \exp(t^\ell A))
        \quad
        \textrm{as well as}
        \quad
 	\epsilon(\exp(t^\ell A)) = 1.
    \end{equation*}
    We define $T = \exp(t^\ell A) \tilde{T}$ and obtain
    $\Delta(T)\twist{F}' = \twist{F}(T\tensor T)$ and
    $T = 1 + t^{\ell+1} T_{\ell+1} + \mathcal{O}(t^{\ell+2})$.
    Repeating this method $k - \ell$ times, we get an equivalence
    starting at order $k$.
\end{proof}
\begin{lemma}
    \label{lem:SkewsymmetricTwistExact}%
    Let
    $\twist{F}, \twist{F}' \in (\uea{\lie{g}} \tensor
    \uea{\lie{g}})[[t]]$
    be two equivalent twists coinciding up to order $k$. Then there
    exists an element $\xi \in \lie{g}^*$, such that
    \begin{equation}
        \label{eq:DiffTwistsOrderkplusOne}
        ([F_{k+1} - F'_{k+1}])^\flat
        =
        \delta_\CE \xi.
    \end{equation}
\end{lemma}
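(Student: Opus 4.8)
The strategy is to read off $\xi$ from an equivalence transformation relating $\twist{F}$ and $\twist{F}'$. We may assume $k \geq 1$: for $k = 0$ two equivalent twists have the same semiclassical limit, so $F_1 - F'_1$ has vanishing antisymmetric part and $\xi = 0$ works. By Lemma~\ref{lem:EquivalenceOfTwists} there is an equivalence $T = 1 + t^k T_k + \mathcal{O}(t^{k+1}) \in \uea{\lie{g}}[[t]]$ with $\Delta(T)\twist{F}' = \twist{F}(T \tensor T)$. Comparing the two sides at order $t^k$ and using $F_k = F'_k$ gives $\Delta(T_k) = T_k \tensor 1 + 1 \tensor T_k$, so that $T_k \in \lie{g}$ is primitive; in particular $\partial T_k = 0$, where $\partial$ is the Hochschild--Kostant--Rosenberg differential from Appendix~\ref{sec:HKR}.

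The next step is to compare $\Delta(T)\twist{F}' = \twist{F}(T \tensor T)$ at order $t^{k+1}$. Using $F_1 = F'_1$, the primitivity of $T_k$, and the linearity of $\Delta$ (so that $\Delta(T)$ has no cross terms), a direct computation yields
\begin{equation*}
    F_{k+1} - F'_{k+1}
    =
    - \partial T_{k+1}
    +
    \bigl[\, T_k \tensor 1 + 1 \tensor T_k,\ F_1 \,\bigr]
    +
    R_k ,
\end{equation*}
where $[\argument, \argument]$ is the commutator of the algebra $\uea{\lie{g}}^{\tensor 2}$ and $R_k$ is a manifestly symmetric remainder, nonzero only for $k = 1$ (where $R_k = - T_1 \tensor T_1$). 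Now I would apply the antisymmetrization $[\argument] = \tfrac{1}{2}(\id - \mathsf{T})$. Since $\uea{\lie{g}}$ is cocommutative, $\partial T_{k+1}$ is symmetric, so both $\partial T_{k+1}$ and $R_k$ drop out. As $\mathsf{T}$ is an algebra automorphism of $\uea{\lie{g}}^{\tensor 2}$ fixing the symmetric element $T_k \tensor 1 + 1 \tensor T_k$, the surviving term becomes $\tfrac{1}{2}\bigl[\, T_k \tensor 1 + 1 \tensor T_k,\ F_1 - \mathsf{T}(F_1) \,\bigr]$, and $F_1 - \mathsf{T}(F_1) = r$ is the common $r$-matrix of $\twist{F}$ and $\twist{F}'$. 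Hence
\begin{equation*}
    [F_{k+1} - F'_{k+1}]
    =
    \tfrac{1}{2}\bigl[\, T_k \tensor 1 + 1 \tensor T_k,\ r \,\bigr]
    =
    \tfrac{1}{2}\Schouten{T_k, r},
\end{equation*}
the last equality being the standard identification of this commutator with the Schouten bracket on $\Anti^\bullet \lie{g}$, checked on generators using the antisymmetry of $r$; in particular $[F_{k+1} - F'_{k+1}]$ lies in $\Anti^2 \lie{g}$ (as required by Lemma~\ref{lem:ExactnessDifference}), so that $\flat$ applies to it.

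Finally, using $\Schouten{T_k, r} = - \Schouten{r, T_k}$ together with the fact recalled in Section~\ref{sec:FedosovSetUp} that $\flat$ intertwines the differential $\Schouten{r, \argument}$ on $\Anti^\bullet \lie{g}$ with $\delta_\CE$ on $\Anti^\bullet \lie{g}^*$, i.e. $\flat \circ \Schouten{r, \argument} = \delta_\CE \circ \flat$, I obtain
\begin{equation*}
    [F_{k+1} - F'_{k+1}]^\flat
    =
    - \tfrac{1}{2}\flat\bigl( \Schouten{r, T_k} \bigr)
    =
    - \tfrac{1}{2}\delta_\CE\bigl( \flat(T_k) \bigr)
    =
    \delta_\CE \xi
\end{equation*}
with $\xi = - \tfrac{1}{2}\flat(T_k) \in \lie{g}^*$, which is the claim. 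The main obstacle is essentially bookkeeping: verifying that all contributions to $F_{k+1} - F'_{k+1}$ other than the commutator with $F_1$ are symmetric, and tracking the convention-dependent constants along the chain commutator $\leftrightarrow$ Schouten bracket $\leftrightarrow$ $\delta_\CE$; conceptually everything is carried by Lemma~\ref{lem:EquivalenceOfTwists} and the two intertwining identities.
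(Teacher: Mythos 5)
Your proof is correct and follows essentially the same route as the paper: apply Lemma~\ref{lem:EquivalenceOfTwists}, deduce $T_k \in \lie{g}$ from the order-$t^k$ comparison, skew-symmetrize the order-$t^{k+1}$ equation so that only the commutator $[T_k \tensor 1 + 1 \tensor T_k, r]$ survives, and convert it via $\flat$ into $\delta_\CE$ of a multiple of $T_k^\flat$. The only deviations are cosmetic: you track the symmetric remainder for $k=1$ and the $k=0$ case explicitly, and your factor $\xi = -\tfrac{1}{2}T_k^\flat$ versus the paper's $-T_k^\flat$ is just the choice of normalization in the skew-symmetrization $[\,\cdot\,]$.
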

\begin{proof}
    First, $[F_{k+1} - F'_{k+1}] \in \Anti^2 \lie{g}$, because of
    Theorem~\ref{thm:HKR} and since $\partial(F_{k+1}- F '_{k+1}) = 0$
    as in Lemma~\ref{lem:ExactnessDifference}.  From
    Lemma~\ref{lem:EquivalenceOfTwists} we know that we can find an
    element $T = 1 + t^k T_k + \mathcal{O}(t^{k+1})$ in
    $\uea{\lie{g}}$, such that
    $\Delta(T)\twist{F}' = \twist{F}(T\tensor T)$.  At order $k$ this
    reads
    \begin{equation*}
        \Delta(T_k) + F'_k
        =
        F_k + T_k \tensor 1 + 1 \tensor T_k,
    \end{equation*}
    which is equivalent to $T_k \in \lie{g}$, because $F'_k = F_k$. At
    order $k+1$, we can see that
    \begin{equation*}
        \Delta(T_{k+1}) + \Delta(T_k)F'_1 + F'_{k+1}
        =
        F_{k+1}
        +
        F_1(T_k\tensor 1 + 1 \tensor T_k)
        +
        T_{k+1} \tensor 1
        +
        1 \tensor T_{k+1}.
    \end{equation*}
    For the skew-symmetric part we have
    \begin{equation*}
        [F_{k+1} - F'_{k+1}]
        =
        (T_k\tensor 1 + 1\tensor T_k) r
        -
        r (T_k\tensor 1+1\tensor T_k)
        =
        [T_k \tensor 1 + 1 \tensor T_k, r],
    \end{equation*}
    which is equivalent to
    $([F_{k+1}-F'_{k+1}])^\flat = -\delta_\CE T_k^\flat$.
\end{proof}

%
%

{
  \footnotesize
  \renewcommand{\arraystretch}{0.5}

}

\end{document}